\documentclass[12pt]{amsart}
\usepackage{amssymb,latexsym}
\usepackage{enumerate}

\makeatletter
\@namedef{subjclassname@2010}{%
  \textup{2010} Mathematics Subject Classification}
\makeatother



\newtheorem{theorem}{Theorem}[section]
\newtheorem{corollary}[theorem]{Corollary}
\newtheorem{lemma}[theorem]{Lemma}
\newtheorem{proposition}[theorem]{Proposition}
\theoremstyle{definition}
\newtheorem{definition}[theorem]{Definition}
\newtheorem{remark}[theorem]{Remark}

\numberwithin{equation}{section}

\frenchspacing

\textwidth=13.5cm
\textheight=23cm
\parindent=16pt
\oddsidemargin=-0.5cm
\evensidemargin=-0.5cm
\topmargin=-0.5cm

\usepackage{amssymb}
\usepackage[utf8]{inputenc}
\usepackage[T1]{fontenc}
\usepackage{courier}

\begin{document}

\baselineskip=17pt

\title[Estimation of Szlenk Index]{Estimation of the Szlenk index of Banach Spaces via Schreier spaces}

\author[R. Causey]{Ryan Causey}
\address{Department of Mathematics \\ Texas A\&M University \\
College Station, TX 77845}
\email{rcausey@math.tamu.edu}

\date{}

\begin{abstract}
For each ordinal $\alpha<\omega_1$, we prove the existence of a Banach space with a basis and Szlenk index $\omega^{\alpha+1}$ which is universal for the class of separable Banach spaces with Szlenk index not exceeding $\omega^\alpha$.  Our proof involves developing a characterization of which Banach spaces embed into spaces with an FDD with upper Schreier space estimates.  
\end{abstract}

\subjclass[2010]{Primary 46B03; Secondary 46B28}

\keywords{Szlenk index, Universality, Embedding in spaces with finite dimensional decompositions, Schreier spaces}

\maketitle

\section{Introduction}

Two types of questions have long been of significance in Banach space theory:  Those of universality and those of coordinatization.  One early result which answers a question of each type is that of the universality of $C[0,1]$ for the class of all separable Banach spaces.  This result also affirmatively answers the question of whether any separable Banach space can be embedded in a space with a basis.  Other questions of coordinatization which naturally follow this one include determining when one can embed a particular type of Banach space, such as a reflexive space or an Asplund space, into a Banach space with a coordinate system which has the same or related properties.   

Two other important results concerning universality are those of Pe\l czy\'{n}ski \cite{P}, who showed that there exist Banach spaces $X, X_u$ with a basis and an unconditional basis, respectively, so that if $Y$ is any Banach space with a basis (respectively, unconditional basis), then $Y$ embeds complamentably in $X$ (respectively $X_u$).  In fact, the basis of $Y$ is equivalent to a subsequence of the basis of $X$ (respectively, the basis of $X_u$), and the closed span of this subsequence is a complemented subspace of $X$, (respectively, $X_u$).  Some early major results concerning coordinatization are those of Zippin \cite{Z}, who showed that any separable reflexive space may be embedded into a space with shrinking and boundedly-complete basis, and any space with separable dual can be embedded into a space with a shrinking basis.  It is no coincidence that we have linked these two types of questions here.  The power of bases and other coordinate systems can greatly simplify embedding and universality questions.  For example, Schechtman's space $W$, which has a finite dimensional decomposition and the property that any space with a finite dimensional decomposition embeds almost isometrically into $W$ \cite{Sch}, was used to construct universal spaces in \cite{OSZ2},\cite{FOSZ}.   The technique we use for making questions of universality more tractable will be to embed a space with certain properties into a space with FDD with the same or related properties.  

Another tool used in the study of universality is the Szlenk index.  With it, Szlenk \cite{SZLENK} answered in the negative whether there exists a separable, reflexive space which is universal for the class of all separable, reflexive Banach spaces.  Since then, this and other ordinal indices have seen fruitful use in Banach space theory.  It was shown to completely characterize up to isomorphism separable $C(K)$ spaces \cite{La}.  It was shown by Odell, Schlumprecht, and Zs\'{a}k that Tsirelson spaces act as a sort of upper envelope, via subsequential tree estimates, for certain classes of Banach spaces with bounded Szlenk index \cite{OSZ2}.  Tree estimates were shown to be the uncoordinatized version of the notion of block estimates.  In section $2$, we define the relevant notions to relate the results concerning tree and block estimates.   

Filling a role similar to that played by the Tsirelson spaces are the Schreier spaces.  In section $3$ we will define for each ordinal $\alpha<\omega_1$ the Schreier family $S_\alpha$, a family of subsets of the natural numbers, and then use the family $S_\alpha$ to define the Schreier space $X_\alpha$ and deduce some facts about them.  The proofs of our main theorems are presented in section $5$.  In that section, we begin by observing that we can weaken slightly the hypotheses of a theorem from \cite{FOSZ} connecting tree estimates to block estimates.  We then establish the following connection between Szlenk index and Schreier space estimates.  

\begin{theorem}
If $X$ is a Banach space and $\alpha$ is a countable ordinal with $Sz(X)\leq \omega^\alpha$, then $X$ satisfies subsequential $X_\alpha$-upper tree estimates.  \end{theorem}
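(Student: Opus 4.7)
The strategy is to translate the weak$^*$ hypothesis $Sz(X)\le\omega^\alpha$ into a combinatorial statement about certain trees on $\mathbb{N}$, and then to match the resulting tree index against the combinatorial complexity of $S_\alpha$. Fix a weakly null tree $(x_\sigma)_{\sigma \in [\mathbb{N}]^{<\omega}}$ in $B_X$; the target is to produce a branch $(x_{\sigma_i})_i$ and a subsequence $(k_i) \subseteq \mathbb{N}$ so that $(x_{\sigma_i})$ is uniformly dominated by $(e_{k_i})$ in $X_\alpha$.

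First I would invoke the standard reformulation of the Szlenk index: $Sz(X)\le\omega^\alpha$ is equivalent to the statement that for every $\varepsilon>0$, there is no weak$^*$-null tree in $B_{X^*}$ of rank $\omega^\alpha$ whose successor sequences are $\varepsilon$-separated. For the fixed tree $(x_\sigma)$ and each $\varepsilon>0$, I then form the Bourgain-type tree
\[
T_\varepsilon \;=\; \bigl\{\sigma = (n_1,\dots,n_k) : \exists\, x^* \in B_{X^*} \text{ with } x^*(x_{\sigma|_j})\ge\varepsilon \text{ for } 1\le j \le k\bigr\}.
\]
The main technical step is to show that the Cantor--Bendixson rank of $T_\varepsilon$ is strictly less than $\omega^\alpha$. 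This is proved by a transfinite induction: a chain of large rank in $T_\varepsilon$, together with weak$^*$-compactness of $B_{X^*}$ and the weak-nullity of $(x_\sigma)$ at each node, would yield a weak$^*$-null tree of functionals in $B_{X^*}$ of comparable rank with $\varepsilon$-separated successor sequences, contradicting the Szlenk bound.

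Once this rank estimate is in hand, I would apply the standard pruning principle relating trees of rank below $\omega^\alpha$ on $\mathbb{N}$ to $S_\alpha$: after passing to a suitably thin infinite subsequence $M = (k_i)$ of $\mathbb{N}$, every branch of $T_\varepsilon$ viewed inside $M$ becomes a member of $S_\alpha$. Diagonalizing over $\varepsilon_j = 2^{-j}$ yields a branch $(x_{\sigma_i})_i$ of the original tree together with indices $(k_i)$ such that for each $j$ and each $x^* \in B_{X^*}$, the set $\{k_i : |x^*(x_{\sigma_i})| \ge 2^{-j}\}$ lies in $S_\alpha$. A dyadic decomposition over $j$ then gives, for any scalars $(a_i)$ and $x^* \in B_{X^*}$,
\[
\Bigl|\sum_i a_i\, x^*(x_{\sigma_i})\Bigr| \;\le\; 2\sup_{F \in S_\alpha,\, F \subseteq (k_i)} \sum_{i:\, k_i \in F} |a_i| \;=\; 2\Bigl\|\sum_i a_i e_{k_i}\Bigr\|_{X_\alpha},
\]
delivering the desired subsequential $X_\alpha$-domination with constant $2$. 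The principal obstacle is the rank bound on $T_\varepsilon$: the transfinite induction which dualizes the weak$^*$-Szlenk derivation to a combinatorial tree on $\mathbb{N}$ is delicate at limit stages, where one must invoke weak$^*$-compactness to extract coherent accumulation functionals along branches; once that is surmounted, the pruning and closing estimates are routine.
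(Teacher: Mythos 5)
The proposal follows the right broad outline (reduce the Szlenk bound to a combinatorial bound and compare it against $S_\alpha$ via a Ramsey-type pruning), but there are two related gaps that break the argument.

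First, the tree $T_\varepsilon$ you define encodes the wrong objects. Elements of $T_\varepsilon$ are finite sequences $\sigma=(n_1,\dots,n_k)$ for which a single functional gives a lower $\ell_1^+$-estimate \emph{along the chain of initial segments} $\sigma|_1,\dots,\sigma|_k$ of $\sigma$. But the final dyadic estimate requires control of $F_j=\{i:|x^*(x_{\sigma_i})|\ge 2^{-j}\}$, which is an arbitrary \emph{subset} of the branch $(\sigma_i)$. If $F_j=\{i_1<\dots<i_m\}$, then the tuple $(k_{i_1},\dots,k_{i_m})$ corresponds, via the tree indexing, to the nodes $x_{(k_{i_1})},x_{(k_{i_1},k_{i_2})},\dots$, which are completely different vectors from $x_{\sigma_{i_1}},\dots,x_{\sigma_{i_m}}$. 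There is thus no way to read off ``$\{k_i : i\in F_j\}\in S_\alpha$'' from any statement about $T_{2^{-j}}$, and the closing estimate does not follow. Second, and relatedly, $T_\varepsilon$ is a tree (closed under initial segments) but is \emph{not} a hereditary family on $\mathbb{N}$: deleting interior entries changes which tree-vectors are being tested. The ``standard pruning principle'' you invoke to obtain $M$ is essentially the Gasparis dichotomy (Theorem 3.2 in the paper), and that result, like every Ramsey-type dichotomy of this kind, requires the family to be hereditary. There is no such principle for general well-founded trees on $\mathbb{N}$ of small rank.

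The paper's proof avoids both problems by an extra layer you have omitted: one first uses Zippin's theorem to embed $X$ isometrically into a space $Z$ with a shrinking bimonotone FDD, and one works with the hereditary \emph{Banach-space} trees $\mathcal{F}_n=\{(x_j)\in S_X^{<\omega}:\|\sum a_j x_j\|\ge 2\rho^{n+1}\sum a_j\ \forall (a_j)\subset\mathbb{R}^+\}$, which are genuinely closed under taking subsequences. The FDD is what allows one to pass to \emph{block} perturbations $\mathcal{G}_n=\Sigma(E,Z)\cap(\mathcal{F}_n)^Z_{\overline\varepsilon_n}$ and take the \emph{compression} $\tilde{\mathcal{G}}_n\subset[\mathbb{N}]^{<\omega}$, which is hereditary and has $I_{CB}(\tilde{\mathcal{G}}_n)<\omega^\alpha$ by the chain of index inequalities (Propositions 4.1--4.3 and Theorem 4.4). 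Only then does the Gasparis dichotomy apply, giving nested sets $M_n$ with $\tilde{\mathcal{G}}_n\cap[M_n]^{<\omega}\subset S_\alpha$; diagonalizing produces the set $M$. The branch of the original weakly null tree is chosen to be a $\overline\delta$-skipped block of the resulting blocking $H$, so that for a norming functional the level sets $J^\pm_j$ at scale $\rho^j$ sit inside $\tilde{\mathcal{G}}_j\cap[M_j]^{<\omega}\subset S_\alpha$ via the compression. That is the mechanism that replaces your (incorrect) step of reading $S_\alpha$-membership off of $T_\varepsilon$. If you want to salvage the outline, you must insert the Zippin embedding, work with the hereditary $\ell_1^+$-trees in $X$ rather than an index tree built on $[\mathbb{N}]^{<\omega}$, and use the compression to obtain the hereditary family on $\mathbb{N}$ to which the Gasparis theorem can be applied.
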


We conclude that section by proving a universality result.    

\begin{theorem} For every countable ordinal $\alpha$, there exists a Banach space $Z$ with FDD $E=(E_n)$ satisfying subsequential $X_\alpha$-upper block estimates such that if $X$ is any separable Banach space with $Sz(X)\leq \omega^\alpha$, then $X$ embeds into $Z$. \end{theorem}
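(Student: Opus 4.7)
The plan is to obtain Theorem 1.2 as essentially a formal consequence of Theorem 1.1 together with the tree-estimate-to-block-estimate universality result of Freeman, Odell, Schlumprecht and Zs\'ak, used in the slightly weakened form announced at the beginning of Section 5. By Theorem 1.1, the hypothesis $Sz(X)\leq \omega^\alpha$ on a separable Banach space $X$ upgrades automatically to the structural statement that $X$ satisfies subsequential $X_\alpha$-upper tree estimates. Thus the class of spaces we must embed into a single $Z$ is already contained in a coordinatized-style class to which an FOSZ-type machine can be applied.

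The construction from \cite{FOSZ} that we would invoke takes as input a Banach space $V$ with a 1-unconditional, normalized, shrinking basis and produces a single Banach space $Z_V$ with an FDD $E=(E_n)$ whose block sequences satisfy subsequential $V$-upper estimates, with the property that every separable Banach space satisfying subsequential $V$-upper tree estimates embeds into $Z_V$. The Schreier space $X_\alpha$, by the properties to be collected in Section~3, has such a basis, so one would set $V=X_\alpha$ and take $Z=Z_{X_\alpha}$. Combining this with Theorem 1.1 then gives, for every separable $X$ with $Sz(X)\leq \omega^\alpha$, an embedding $X \hookrightarrow Z$, while the FDD $E$ of $Z$ has the required subsequential $X_\alpha$-upper block estimates. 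In outline the proof therefore has three steps: (i) apply Theorem 1.1 to reduce the Szlenk hypothesis to a tree-estimate hypothesis; (ii) verify that $X_\alpha$ meets the (weakened) hypotheses required of the envelope space in the FOSZ construction; (iii) feed $V=X_\alpha$ into that construction to produce $Z$.

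The main obstacle, and the reason the proof is not immediate from citing \cite{FOSZ} directly, is step (ii). The FOSZ theorem as originally stated requires a condition on the basis of $V$ (for instance, some form of lower estimate or regularity) which is not satisfied uniformly by the Schreier spaces $X_\alpha$ for all countable $\alpha$; this is precisely the reason that Section~5 opens by weakening the hypotheses of the FOSZ theorem. Carrying out that weakening, that is, revisiting the FOSZ construction and isolating only those properties of $V$ genuinely used, so that the Schreier basis qualifies, is the technical heart of the proof of Theorem 1.2. Once that weakened version is in hand, steps (i) and (iii) are formal, and the universality of $Z$ for the class $\{X:\ Sz(X)\leq \omega^\alpha\}$ follows at once.
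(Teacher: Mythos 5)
Your proposal matches the paper's proof of Theorem 1.2 (Corollary 5.6) step for step: apply Theorem 1.1 to pass from the Szlenk hypothesis to subsequential $X_\alpha$-upper tree estimates, then embed into the universal space $Z$ for the class $\mathcal{A}_{X_\alpha}$ produced by the weakened FOSZ construction (Theorem 5.4) with $V=X_\alpha$. You also correctly locate the technical heart as step (ii) — the failure of block stability for $X_\alpha$ (a consequence of $c_0$-saturation) forces the hypotheses of the original FOSZ theorem to be weakened to the requirement that $(v_n)$ satisfy subsequential $V$-upper block estimates in $V$, which $(e_n)\subset X_\alpha$ does satisfy by Proposition 3.1.
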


Combining this theorem with a result of Johnson, Rosenthal, and Zippin and a result of Odell, Schlumprecht, and Zs\'{a}k, we can deduce the following.  

\begin{corollary} For every countable ordinal $\alpha$, there exists a Banach space $W$ with a basis and $Sz(W)\leq \omega^{\alpha+1}$ such that if $X$ is any separable Banach space with $Sz(X)\leq \omega^\alpha$ then $X$ embeds into $W$.    
\end{corollary}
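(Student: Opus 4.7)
The plan is to obtain $W$ by passing from the space $Z$ produced by Theorem 1.2 to a space with a basis via the Johnson--Rosenthal--Zippin embedding theorem, and then to bound the Szlenk index of the resulting space using the Odell--Schlumprecht--Zs\'ak criterion that relates $X_\alpha$-upper block estimates to the Szlenk index.

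First I would apply Theorem 1.2 to the countable ordinal $\alpha$ to obtain a Banach space $Z$ with an FDD $E=(E_n)$ satisfying subsequential $X_\alpha$-upper block estimates and such that every separable Banach space $X$ with $Sz(X)\leq \omega^\alpha$ embeds into $Z$. Since an $X_\alpha$-upper block estimate should force the FDD to be shrinking (as $X_\alpha$ contains no copy of $\ell_1$ and the upper estimate controls the behavior of block sequences), $Z$ has a shrinking FDD. Then I would invoke the Johnson--Rosenthal--Zippin theorem: any Banach space with a shrinking FDD embeds into a Banach space with a shrinking basis. Applied to $Z$, this produces a Banach space $W$ with a basis into which $Z$ (and hence every separable $X$ with $Sz(X)\le \omega^\alpha$) embeds.

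The delicate point, which I expect to be the main obstacle, is that $W$ must itself carry a basis satisfying subsequential $X_\alpha$-upper block estimates, so that the Odell--Schlumprecht--Zs\'ak result bounding the Szlenk index by $\omega^{\alpha+1}$ for spaces with a basis (or FDD) satisfying such block estimates can be applied to $W$. To handle this I would use a form of the Johnson--Rosenthal--Zippin construction in which the basis of $W$ is a refinement of the FDD of $Z$, i.e.\ the basis vectors, grouped into consecutive blocks, reproduce the FDD $E=(E_n)$ (after a suitable change of norm that does not disturb the block estimates). Any normalized block sequence of the basis of $W$ then decomposes, up to a small perturbation, into a normalized block sequence of $E$, to which the $X_\alpha$-upper block estimate of $Z$ applies; this should yield the corresponding block estimate for $W$.

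Once $W$ is known to satisfy subsequential $X_\alpha$-upper block estimates, the Odell--Schlumprecht--Zs\'ak result yields $Sz(W)\le \omega^{\alpha+1}$. Combining this with the universality of $Z$ and the embedding $Z\hookrightarrow W$, the corollary follows. The remaining work is therefore checking the folklore facts that the FDD in Theorem 1.2 is shrinking, that the JRZ refinement preserves (up to constants) the block estimates of the FDD, and that the OSZ Szlenk bound applies in the form stated; each of these is standard once the set-up of subsequential block estimates from Section 2 is in place.
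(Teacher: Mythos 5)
Your overall strategy (Theorem~1.2 for the universal FDD space $Z$, a Johnson--Rosenthal--Zippin step to pass to a space with a basis, and an Odell--Schlumprecht--Zs\'ak bound on the Szlenk index via block estimates) is the right skeleton, and the subsidiary facts you cite (the FDD of $Z$ is shrinking, and upper $X_\alpha$-block estimates together with $Sz(X_\alpha)=\omega^{\alpha+1}$ give the Szlenk bound) are correct and appear in the paper as Theorem~5.4, Corollary~4.5, and Proposition~4.6. However, the way you handle the ``delicate point'' misrepresents what the JRZ theorem gives. The relevant result (\cite[Corollary~4.12]{JRZ}) does not produce a basis of $Z$ that refines the FDD $E=(E_n)$; it produces a sequence of finite-dimensional spaces $(H_n)$ such that the \emph{larger} space $W = Z \oplus D$, with $D=(\bigoplus_n H_n)_2$, has a basis. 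A normalized block sequence of the basis of $W$ need not decompose, even approximately, into a block sequence of $E$ in $Z$: it can have substantial (or total) components in the auxiliary summand $D$. Consequently your claim that the $X_\alpha$-upper block estimate of $Z$ transfers to $W$ by a perturbation argument does not go through, and the attempt to apply the OSZ bound directly to $W$ via block estimates of its basis is unjustified.

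The paper circumvents this exact difficulty by not trying to push the $X_\alpha$-upper block estimate into $W$. Instead it bounds the two pieces separately: Corollary~4.5 and Proposition~4.6 give $Sz(Z)\le Sz(X_\alpha)=\omega^{\alpha+1}$, and since $(H_n)$ satisfies $\ell_2$-upper block estimates in $D$ one gets $Sz(D)\le\omega$ by \cite[Theorem~3]{OS}; then $Sz(W)=\max\{Sz(Z),Sz(D)\}$ by \cite[Proposition~14]{OSZ2}. To repair your argument you should replace your perturbation step with exactly this combination of the direct-sum formula for the Szlenk index and the separate bound on $Sz(D)$; that closes the gap cleanly and is precisely how the paper completes the proof.
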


This is a strengthening of a theorem from \cite{FOSZ}, which proved the above in the case that $\alpha=\beta \omega$ for some $\beta<\omega_1$.  

This paper was completed at Texas A\&M under the direction of Thomas Schlumprecht as part of the author's doctoral dissertation.  The author thanks Dr. Schlumprecht for his insights and direction during its completion.

\section{Definitions and Notation}

Throughout, unless otherwise stated, Banach spaces are real, separable, and infinite-dimensional.  

A sequence $(E_n)$ of finite dimensional spaces is called a \emph{finite dimensional decomposition} (FDD) for a Banach space $Z$ if for each $z\in Z$ there exists a unique sequence $(z_n)$ so that $z_n\in E_n$ and $z=\displaystyle\sum_{n=1}^\infty z_n$.  Let $Z$ be a Banach space with an FDD $E=(E_n)$ and $n\in \mathbb{N}$.  We let $P^E_n$ denote the $\emph{n}$-th \emph{coordinate projection} $P^E_n:Z\to E_n$ defined by $\displaystyle\sum z_i\mapsto z_n$, where $z_i\in E_i$ for all $i\in \mathbb{N}$.  For $z\in Z$, we define $\text{supp}_E z=\{n: P^E_nz\neq 0\}$.  If no confusion is possible, we may write $\text{supp\ }z$ for $\text{supp}_E z$.  For $A\subset \mathbb{N}$ finite, $P^E_A=\displaystyle\sum_{n\in A}P^E_n$.  The \emph{projection constant} $K(E,Z)$ of $(E_n)$ is defined by $$K=K(E,Z)=\underset{m\leq n}{\sup}\|P^E_{[m,n]}\|,$$  By the Principle of Uniform Boundedness, $K$ is finite.  We call an FDD $E$ for $Z$ bimonotone if $K(E,Z)=1$.  If a space $Z$ has an FDD $E$, one can always endow $Z$ with an equivalent norm which makes $E$ a bimonotone FDD for $Z$.    

A sequence $(F_n)$ is a blocking of $(E_n)$ if there exists $1=m_0<m_1<\ldots$ so that $F_n=\underset{j=m_{n-1}}{\overset{m_n-1}{\bigoplus}}E_j$ for all $n\in \mathbb{N}$.  If $(E_n)$ is an FDD for a Banach space $Z$, then the blocking $(F_n)$ is an FDD for $Z$ with projection constant not exceeding that of $(E_n)$.  

For any sequence $(E_n)$ of finite-dimensional spaces, we let $$c_{00}\Bigl(\underset{n=1}{\overset{\infty}{\bigoplus}}E_n\Bigr)=\bigl\{(z_n):z_n\in E_n \forall n\in \mathbb{N}, \{i\in \mathbb{N}:z_n\neq 0\} \text{ is finite\ }\bigr\}.$$

This space is dense in any Banach space for which $(E_n)$ is an FDD.   

If $Z$ is a Banach space with FDD $(E_n)$, we let $Z^{(*)}$ denote the closure of $c_{00}\Bigl(\underset{n=1}{\overset{\infty}{\bigoplus}}E^*_n\Bigr)$ in $Z^*$.  We call an FDD $(E_n)$ for a Banach space $Z$ \emph{shrinking} if $Z^*=Z^{(*)}$.  It is not necessarily true that the embedding $E_n^*\hookrightarrow Z^*$ is isometric unless $(E_n)$ is bimonotone.  The norm on $E^*_n$ is that induced by $Z^*$, and not the norm it inherits as the dual of $E_n$.  If $(E_n)$ is bimonotone, $Z^{(*)(*)}=Z$.  

An FDD $(E_n)$ for a Banach space $Z$ is called \emph{boundedly-complete} if whenever $z_n\in E_n$ and $\underset{n\in \mathbb{N}}{\sup}\Bigl\|\displaystyle\sum_{n=1}^\infty z_n\Bigr\|<\infty$, $\displaystyle\sum_{n=1}^n z_n$ converges in $Z$.  Any space with a boundedly-complete FDD is naturally a dual space.

A sequence (finite or infinite) of finitely supported non-zero vectors $(z_n)$ so that $$\max \text{supp}_E z_n<\min \text{supp}_E z_{n+1}$$

for all appropriate $n$ is called a \emph{block sequence} with respect to $E$.  When no confusion is possible, we simply call $(z_n)$ a block sequence.   

Throughout, we will through an abuse of notation conflate a basis for a Banach space $(e_n)$ with the corresponding FDD in which each finite dimensional space is the span of the corresponding basis vector.

\begin{definition}

If $(e_n)$ and $(f_n)$ are sequences in some Banach spaces and $C>0$ is such that $$\Bigl\|\displaystyle\sum a_nc_n\Bigr\|\leq C\Bigl\|\displaystyle\sum a_nf_n\Bigr\|$$
for all $(a_n)\in c_{00}$, then we say $(e_n)$ is $C$-\emph{dominated} by $(f_n)$, or that $(f_n)$ $C$-\emph{dominates} $(e_n)$.  We say $(e_n),(f_n)$ are $C$-\emph{equivalent} if there exist constants $A,B>0$ so that $AB\leq C$, $(e_n)$ $A$-dominates $(f_n)$, and $(f_n)$ $B$-dominates $(e_n)$.  

We say $(f_n)$ \emph{dominates} $(e_n)$ or $(e_n)$ is \emph{dominated by} $(f_n)$ if there is some $C>0$ so that $(f_n)$ $C$-dominates $(e_n)$.  We say $(e_n)$ and $(f_n)$ are \emph{equivalent} if there exists some $C>0$ so that they are $C$-equivalent.  \end{definition}

\begin{definition}

Let $V$ be a Banach space with normalized, $1$-unconditional basis $(v_n)$.  Then $(v_n)$ is $C$-\emph{right dominant} (respectively $C$-\emph{left dominant }) if for subsequences $(k_n),(\ell_n)$ of $\mathbb{N}$ so that $k_n\leq \ell_n$ for each $n$, $(v_{k_n})$ is $C$-dominated by (respectively $C$-dominates) $(v_{\ell_n})$.  We say $(v_n)$ is \emph{right dominant} (respectively \emph{left dominant}) if it is $C$-right dominant (respectively $C$-left dominant) for some $C\geq 1$.

 We say that $(v_n)$ is $C$-\emph{block-stable} if whenever $(x_n),(y_n)$ are normalized block sequences in $V$ with $$\max\bigl( \text{supp}_V (x_n)\cup \text{supp}_V (y_n)\bigr)<\min \bigl(\text{supp}_V (x_{n+1})\cup \text{supp}_V (y_{n+1})\bigr),$$  

then $(x_n)$ and $(y_n)$ are $C$-equivalent.  We say $(v_n)$ is \emph{block-stable} if it is $C$-block-stable for some $C$.  
\end{definition}

\begin{definition}

Let $Z$ be a Banach space with an FDD $E=(E_n)$, $V$ a Banach space with a normalized, $1$-unconditional basis $(v_n)$, and let $1\leq C<\infty$.  We say $E$ \emph{satisfies subsequential} $C$-$V$- \emph{upper block estimates} in $Z$ if any normalized block sequence $(z_n)$ in $Z$ is $C$-dominated by $(v_{m_n})$, where $m_n=\min \text{supp}_E z_n$.  We say $E$ \emph{satisfies subsequential} $C$-$V$-\emph{lower block estimates} in $Z$ if any normalized block sequence $(z_n)$ $C$-dominates $(e_{m_n})$.  We say $E$ \emph{satisfies subsequential} $V$-\emph{upper} (respectively \emph{lower}) \emph{block estimates} in $Z$ if there is some $C$ such that $E$ satisfies $C$-$V$-upper (respectively lower) block estimates in $Z$.  

\end{definition}

A standard perturbation argument gives the following, which allows flexibility in choosing the indices for norm estimates of block sequences.

\begin{proposition}
Let $V$ be a Banach space with normalized, $1$-unconditional basis $(v_n)$, and let $Z$ be a Banach space with FDD $(E_n)$ which satisfies subsequential $C$-$V$-upper (respectively, lower) block estimates in $Z$.  Then if $(x_n)$ is a normalized block sequence in $E$ and $(k_n)$ is a subsequence of $\mathbb{N}$ with $\max \text{\rm \ supp\ }x_n<k_{n+1}\leq \min \text{\rm \ supp\ } x_{n+1}$ for all $n$, then $(x_n)$ is $C$-dominated by (respectively, $C$-dominates) $(v_{k_n})$.   \end{proposition}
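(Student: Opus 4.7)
The plan is to reduce to the case $k_n = m_n := \min \text{supp}_E x_n$ by a small perturbation of $(x_n)$. Fix $\varepsilon>0$. For each $n$ pick a vector $\eta_n \in E_{k_n}$ with $\|\eta_n\| = \varepsilon 2^{-n}$ and set $\tilde x_n = x_n + \eta_n$. The hypotheses on $(k_n)$ give $k_n \le m_n$ and $k_{n+1} > \max \text{supp}_E x_n$, so $(\tilde x_n)$ is a block sequence in $E$ with $\min \text{supp}_E \tilde x_n = k_n$ and $\|\tilde x_n\| \in [1 - \varepsilon 2^{-n}, 1 + \varepsilon 2^{-n}]$. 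Normalizing, $y_n := \tilde x_n / \|\tilde x_n\|$ gives a normalized block sequence with $\min \text{supp}_E y_n = k_n$, so applying the hypothesized block estimate directly to $(y_n)$ yields $\|\sum a_n y_n\| \le C \|\sum a_n v_{k_n}\|$ for every $(a_n) \in c_{00}$.

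For the upper estimate case, write $\sum a_n \tilde x_n = \sum (a_n \|\tilde x_n\|) y_n$ and use $1$-unconditionality of $(v_n)$ with $\|\tilde x_n\| \le 1 + \varepsilon$ to get
$$\Bigl\|\sum a_n \tilde x_n\Bigr\| \le C \Bigl\|\sum a_n \|\tilde x_n\| v_{k_n}\Bigr\| \le C(1+\varepsilon) \Bigl\|\sum a_n v_{k_n}\Bigr\|.$$
The perturbation error is controlled by noting that the $\eta_n$ are disjointly supported in the FDD with geometrically small norms, so
$$\Bigl\|\sum a_n \eta_n\Bigr\| \le \sum_n |a_n|\,\varepsilon 2^{-n} \le \varepsilon \max_n |a_n| \le \varepsilon \Bigl\|\sum a_n v_{k_n}\Bigr\|,$$
using the standard bound $\max_n |a_n| \le \|\sum a_n v_{k_n}\|$ that holds for any normalized $1$-unconditional basis. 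The triangle inequality then gives $\|\sum a_n x_n\| \le (C + (C+1)\varepsilon)\|\sum a_n v_{k_n}\|$, and since $\varepsilon$ was arbitrary this yields the claimed $C$-domination in the limit.

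The lower estimate case is symmetric: apply the lower block estimate to $(y_n)$, use $\|\tilde x_n\| \ge 1 - \varepsilon$ together with the reverse direction of $1$-unconditional monotonicity to pass from $(y_n)$ back to $(x_n)$, and absorb the same perturbation error. The only subtlety is bookkeeping rather than computation — one must verify that each application of $1$-unconditionality goes in the direction needed (i.e.\ $|b_n| \ge |c_n|$ for all $n$ implies $\|\sum b_n v_n\| \ge \|\sum c_n v_n\|$, and its reverse) and that the perturbation error $\|\sum a_n \eta_n\|$ is absorbed by the $\varepsilon$-slack rather than inflating the constant $C$. This last point is why it is essential to bound $\|\sum a_n \eta_n\|$ via $\max_n |a_n| \le \|\sum a_n v_{k_n}\|$ rather than in terms of $\|\sum a_n x_n\|$ directly.
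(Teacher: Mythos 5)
Your perturbation argument is correct and is precisely the ``standard perturbation argument'' the paper invokes without spelling out; all the pieces (the geometric-decay choice of $\eta_n$, the use of $1$-unconditionality to move the factors $\|\tilde x_n\|$ in and out, and crucially the bound $\|\sum a_n\eta_n\|\leq\varepsilon\max_n|a_n|\leq\varepsilon\|\sum a_n v_{k_n}\|$ so that the error is absorbed without inflating $C$) are exactly as they should be. One bookkeeping remark: you invoke $k_n\leq m_n$ for all $n$, but the stated inequality $\max\operatorname{supp}x_n<k_{n+1}\leq\min\operatorname{supp}x_{n+1}$ only yields this for $n\geq2$; the intended reading (consistent with every application of this proposition later in the paper, and with the convention $\max\varnothing=0$ applied to a fictitious $x_0$) is that $k_1\leq\min\operatorname{supp}x_1$ as well, and without that the conclusion would in general be false, so it is worth making the $n=1$ case explicit.
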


Next, we define the uncoordinatized version of block estimates, which was first considered in \cite{OSZ1}.  

\begin{definition}

For $\ell\in \mathbb{N}$, we define $$T_\ell=\{(n_1,\ldots,n_\ell): n_1<\ldots<n_\ell, n_i\in \mathbb{N}\}$$

and $$T_\infty=\underset{\ell=1}{\overset{\infty}{\bigcup}}T_\ell,\text{\ \ \ \ \ \ \ } T_\infty^\text{even}=\underset{\ell=1}{\overset{\infty}{\bigcup}}T_{2\ell}.$$

An \emph{even tree} in a Banach space $X$ is a family $(x_t)_{t\in T^\text{even}_\infty}$ in $X$.  Sequences of the form $(x_{(t,k)})_{k>k_{2n-1}}$, where $n\in \mathbb{N}$ and $t=(k_1,\ldots,k_{2n-1})\in T_\infty$, are called \emph{nodes}.  A sequence of the form $(k_{2n-1},x_{(k_1,\ldots,k_{2n})})_{n=1}^\infty$, with $k_1<k_2<\ldots$, is called a \emph{branch} of the tree.  An even tree is called \emph{weakly null} if every node is a weakly null sequence.  If $X$ is a dual space, an even tree is called $w^*$ \emph{null} if every node is $w^*$ null.  If $X$ has an FDD $E=(E_n)$, a tree is called a \emph{block even tree of} $E$ if every node is a block sequence of $E$.  

If $T\subset T^\text{even}_\infty$ is closed under taking restrictions so that for each $t\in T\cup \{\varnothing\}$ and for each $m\in \mathbb{N}$ the set $\{n\in \mathbb{N}: (t,m,n)\in T\}$ is either empty or infinite, and if the latter occurs for infinitely many values of $m$, then we call $(x_t)_{t\in T}$ a \emph{full subtree}.  Such a tree can be relabeled to a family indexed by $T^\text{even}_\infty$ and the branches of $(x_t)_{t\in T}$ are branches of $(x_t)_{t\in T^\text{even}_\infty}$ and that the nodes of $(x_t)_{t\in T}$ are subsequences of the nodes of $(x_t)_{t\in T^\text{even}_\infty}$. 

\end{definition}

\begin{definition}

Let $V$ be a Banach space with normalized, $1$-unconditional basis $(v_n)$ and $C\geq 1$.  Let $X$ be an infinite-dimensional Banach space.  We say that $X$ \emph{satisfies subsequential} $C$-$V$-\emph{lower tree estimates} if every normalized, weakly null even tree $(x_t)_{t\in T^\text{even}_\infty}$ in $X$ has a branch $(k_{2n-1},x_{(k_1, \ldots,k_{2n})})$ so that $(x_{(k_1, \ldots, k_{2n})})_n$ $C$-dominates $(v_{k_{2n-1}})_n$.  We say $X$ \emph{satisfies subsequential} $C$-$V$-\emph{upper tree estimates} if every normalized, weakly null even tree $(x_t)_{t\in T^\text{even}_\infty}$ in $X$ has a branch $(k_{2n-1},x_{(k_1,\ldots,k_{2n})})$ so that $(x_{(k_1, \ldots, k_{2n})})_n$ is $C$-dominated by $(v_{k_{2n-1}})$.  

We say that $X$ \emph{satisfies subsequential} $V$-\emph{upper (respectively lower) tree estimates} if it satisfies $C$-$V$-upper (respectively lower) tree estimates for some $C\geq 1$.  

If $X$ is a subspace of a dual space, we say that $X$ \emph{satisfies subsequential $C$-$V$-lower $w^*$ tree estimates} if every $w^*$ null even tree $(x_t)_{t\in T^\text{even}_{\infty}}$ in $X$ has a branch $(x_{(n_1,\ldots,n_{2i})})_{i=1}^\infty$ which $C$-dominates $(v_{n_{2i-1}})$.  \end{definition} 

\vspace{5mm}

For $C\geq 1$, let $\mathcal{A}_V(C)$ be the class of Banach spaces which satisfy subsequential $C$-$V$-upper tree estimates, and $\mathcal{A}_V=\underset{C\geq 1}{\bigcup}\mathcal{A}_V(C)$.  We prove in Section $5$ that this class has a universal element.  That is, it contains an element into which any other element of this class embeds.  

\vspace{5mm}

The upper and lower estimates are dual notions in a very natural way.  We make this precise below.    

\begin{proposition}\cite[Lemma 3]{OSZ1}
If $Z$ is a Banach space with FDD $(E_n)$, and $V$ is a Banach space with normalized, $1$-unconditional basis $(v_n)$, then the following are equivalent:  \begin{enumerate}
\item $(E_n)$ satisfies subsequential $V$-upper block estimates in $Z$.

\item $(E_n^*)$ satisfies subsequential $V^{(*)}$-lower block estimates in $Z^{(*)}$.  \end{enumerate} \end{proposition}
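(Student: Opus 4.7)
The plan is to establish both implications directly by duality, after reducing to the bimonotone case. Since passing to an equivalent norm that makes $(E_n)$ bimonotone changes both statements by a constant factor depending only on the projection constant $K$, and the proposition asserts a qualitative equivalence, I may assume $K(E,Z)=1$, so that $Z^{(*)(*)}=Z$ and the embedding $E_n^*\hookrightarrow Z^{(*)}$ is isometric.

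For (1)$\Rightarrow$(2): Let $(z_n^*)$ be a normalized block sequence in $Z^{(*)}$ with support intervals $I_n$ and $m_n=\min I_n$, and fix $(a_n)\in c_{00}$. By $1$-unconditionality of $(v_n)$, duality gives
$$\|\textstyle\sum a_n v_{m_n}^*\|_{V^{(*)}}=\sup\{\textstyle\sum a_n c_n:\|\sum c_n v_{m_n}\|\le 1,\ c_n\ge 0\}.$$
Given admissible $(c_n)$ and $\varepsilon>0$, for each $n$ choose $y_n\in Z$ with $\operatorname{supp}_E y_n\subseteq I_n$, $\|y_n\|\le 1$, and $z_n^*(y_n)\ge 1-\varepsilon$ (possible since $\|z_n^*\|=1$ and bimonotonicity lets us restrict to $I_n$ without loss of norm). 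Set $z=(1-\varepsilon)^{-1}\sum c_n y_n$; then $(y_n)$ is a block sequence of $E$, and the hypothesis together with Proposition~2.4 yields $\|z\|\le C(1-\varepsilon)^{-1}\|\sum c_n v_{m_n}\|\le C(1-\varepsilon)^{-1}$. Evaluating,
$$\textstyle\sum a_n c_n\le (\sum a_n z_n^*)(z)\le \|\sum a_n z_n^*\|\cdot C(1-\varepsilon)^{-1},$$
and taking the supremum over $(c_n)$ and letting $\varepsilon\to 0$ gives that $(z_n^*)$ $C$-dominates $(v_{m_n}^*)$.

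For (2)$\Rightarrow$(1): Let $(z_n)$ be a normalized block sequence in $Z$ with supports $I_n$ and $m_n=\min I_n$, and take $(a_n)\in c_{00}$. Choose $z^*\in Z^*$ with $\|z^*\|\le 1$ approximating $\|\sum a_n z_n\|$. Set $w_n^*=(P^E_{I_n})^* z^*\in Z^{(*)}$; these are blocks of $(E_n^*)$ with support in $I_n$, and by bimonotonicity $\|\sum w_n^*\|=\|(P^E_{\bigcup I_n})^* z^*\|\le 1$. Because $\operatorname{supp}_{E^*} w_n^*\subseteq I_n$, the sequence $(w_n^*/\|w_n^*\|)$ (restricted to nonzero terms) is a normalized block sequence to which the lower block estimate applies with indices $m_n$, justified by the lower-estimate analogue of Proposition~2.4 (since $\max\operatorname{supp} w_{n-1}^*<m_n\le\min\operatorname{supp} w_n^*$). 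This yields
$$\|\textstyle\sum \|w_n^*\| v_{m_n}^*\|_{V^{(*)}}\le C\|\sum w_n^*\|\le C.$$
The standard duality pairing in $V$ now gives $\sum |a_n|\,\|w_n^*\|\le \|\sum a_n v_{m_n}\|_V\cdot\|\sum \|w_n^*\| v_{m_n}^*\|_{V^{(*)}}\le C\|\sum a_n v_{m_n}\|$. Since $|z^*(z_n)|=|w_n^*(z_n)|\le \|w_n^*\|$, we conclude $|(\sum a_n z_n)(z^*)|\le C\|\sum a_n v_{m_n}\|$, proving (1).

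The main technical point to get right is the bookkeeping for Proposition~2.4: one must verify that the projected functionals $w_n^*$ (or the vectors $y_n$) have minimum supports lying in the correct window relative to $m_n$ so that the same indexing $(m_n)$ can be used on both sides of the block-estimate inequality. Everything else is a routine duality computation, and the passage back from the bimonotone renorming to the original norm only introduces an extra factor depending on the projection constant, which is absorbed into the constant $C$ in the qualitative statement.
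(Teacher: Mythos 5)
The paper does not give its own proof of this proposition; it is cited verbatim from \cite[Lemma 3]{OSZ1}. Your proof is the standard duality argument and, as far as I can tell, is correct and is essentially what one finds in \cite{OSZ1}. One small point worth making explicit: in both directions your argument tacitly needs the $I_n$ to be \emph{consecutive} intervals (e.g.\ $I_n=[m_n,m_{n+1})$, filling any gaps between the supports), not literally $\operatorname{supp}_E z_n$. This is what makes $\bigcup_n I_n$ an interval so that $\|(P^E_{\cup I_n})^*z^*\|\le 1$ under bimonotonicity (for a non-interval set the projection constant is only bounded by $2K$), and it is also what gives $\max\operatorname{supp} w_{n-1}^* < m_n \le \min\operatorname{supp} w_n^*$ so that Proposition~2.4 can be invoked with the indices $m_n$. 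With that reading, the un-normalized blocks $y_n$ (respectively $w_n^*/\|w_n^*\|$) satisfy the hypotheses of Proposition~2.4, the $1$-unconditionality of $(v_n)$ absorbs the scalars $\|y_n\|\le 1$, and the rest is the routine Hahn--Banach/duality computation you carried out.
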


\begin{lemma}\cite[Lemma 2.7]{FOSZ}
Let $X$ be a Banach space with separable dual, and let $V=(v_n)$ be a normalized, $1$-unconditional, right dominant basis.  If $X$ satisfies subsequential $V$-upper tree estimates, then $X^*$ satisfies subsequential $V^{(*)}$-lower $w^*$ tree estimates.  \end{lemma}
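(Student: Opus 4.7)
The strategy is to convert the given $w^*$ null even tree in $X^*$ into a weakly null even tree in $X$ made of near-norming witnesses, apply the $V$-upper tree estimate hypothesis there, and then pair the two trees to recover a lower $w^*$ tree estimate on the branches of $(x^*_t)$. Let $(x^*_t)_{t\in T^{\text{even}}_\infty}$ be normalized and $w^*$ null. The main step is to build, on a full subtree (relabeled back to $T^{\text{even}}_\infty$), a weakly null even tree $(y_t)\subset 2B_X$ with $x^*_t(y_t)\geq c$ for an absolute constant $c>0$ and $|x^*_s(y_t)|<\delta_{s,t}$ whenever $s\neq t$ share a branch, where $(\delta_{s,t})$ is summable along branches. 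Since $X^*$ is separable, $X$ does not contain $\ell^1$, so by Rosenthal's theorem every bounded sequence in $X$ has a weakly Cauchy subsequence, and consecutive differences of such a subsequence are weakly null in $X$ with norm at most $2$. Level by level, for each odd-length $s$ I use Hahn-Banach to pick $\tilde y_{(s,j)}\in B_X$ with $x^*_{(s,j)}(\tilde y_{(s,j)})>1-\epsilon$, extract a weakly Cauchy subsequence, and set $y_{(s,j)}:=\tilde y_{(s,j)}-\tilde y_{(s,j')}$ for an appropriate earlier $j'$. Since $(x^*_{(s,j)})_j$ is $w^*$ null, for each fixed $j'$ we have $x^*_{(s,j)}(\tilde y_{(s,j')})\to 0$, so a further extraction yields $x^*_{(s,j)}(y_{(s,j)})>\tfrac12$. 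Simultaneously, using $w^*$-nullness of $(x^*_{(s,j)})_j$ against already-chosen $y_{t'}$ and weak-nullness of $(y_{(s,j)})_j$ against already-chosen $x^*_{s'}$, one arranges the off-diagonal pairings $|x^*_s(y_t)|$ to be summably small along branches.

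After normalizing $\bar y_t:=y_t/\|y_t\|$ to a weakly null even tree in $B_X$, the hypothesis produces a branch $(k_1,k_2,\ldots)$ along which $(\bar y_{(k_1,\ldots,k_{2i})})$ is $C$-dominated by $(v_{k_{2i-1}})$. When the full subtree is relabeled back, each index $k$ corresponds to some $\pi(k)\geq k$ of the original tree; right-dominance of $V$ then gives that $(v_{k_{2i-1}})$ is $C_V$-dominated by $(v_{\pi(k_{2i-1})})$, transporting the estimate to the original indexing. For some $C'$ and all $(b_i)\in c_{00}$,
\[
\Bigl\|\sum_i b_i\, y_{(k_1,\ldots,k_{2i})}\Bigr\|\leq C'\Bigl\|\sum_i b_i\, v_{k_{2i-1}}\Bigr\|,
\]
with $(k_i)$ now denoting indices in the original tree.

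Finally, fix $(a_i)\in c_{00}$. By $1$-unconditionality of $V$,
\[
\Bigl\|\sum_i a_i\, v^{(*)}_{k_{2i-1}}\Bigr\|=\sup\Bigl\{\,\Bigl|\sum_i a_i b_i\Bigr|:\Bigl\|\sum_i b_i\, v_{k_{2i-1}}\Bigr\|\leq 1\Bigr\}.
\]
For such $(b_i)$, let $y=\sum_i b_i\, y_{(k_1,\ldots,k_{2i})}$; then $\|y\|\leq C'$. Writing $t_i=(k_1,\ldots,k_{2i})$,
\[
\Bigl(\sum_i a_i\, x^*_{t_i}\Bigr)(y)=\sum_i a_i b_i\, x^*_{t_i}(y_{t_i})+\sum_{i\neq j}a_i b_j\, x^*_{t_i}(y_{t_j}).
\]
After absorbing signs via unconditionality, the diagonal is at least $c\,|\sum_i a_i b_i|$, while the off-diagonal is summably small: one uses near-biorthogonality to bound $|a_i|$ by a multiple of $\|\sum_i a_i\, x^*_{t_i}\|$, and $|b_i|\leq 1$ by $1$-unconditionality. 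This yields $|\sum_i a_i b_i|\leq C''\,\|\sum_i a_i\, x^*_{t_i}\|$, which is the required lower $w^*$ tree estimate.

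\textbf{Main obstacle.} The technical core is the first step: simultaneously arranging weak-nullness of each new node, near-norming of $x^*_t$ by $y_t$, and summable off-diagonal biorthogonality along branches. Each ingredient is standard (Rosenthal, $w^*$-nullness, Hahn-Banach), but interleaving them in a single coherent diagonal construction is where the bookkeeping really lives.
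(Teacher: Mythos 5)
Your proof is correct and, as far as I can tell, follows essentially the same route as the cited argument: construct a weakly null, near-biorthogonal witness tree $(y_t)\subset 2B_X$ for the $w^*$-null tree $(x^*_t)$ using Hahn--Banach, Rosenthal's $\ell^1$ theorem (since $X^*$ separable forces $\ell^1\not\hookrightarrow X$), and $w^*$-nullness of the nodes; apply the $V$-upper tree hypothesis to the normalized witness tree; transport indices via right-dominance; and then dualize along the chosen branch, using $1$-unconditionality of $(v^*_n)$ both to reduce the $V^{(*)}$-norm to a supremum over scalars supported on $(k_{2i-1})$ and to absorb signs, together with the near-biorthogonality bounds $x^*_{t_i}(\bar y_{t_i})\ge c$ and $\sum_{i\ne j}|x^*_{t_i}(\bar y_{t_j})|$ small (the latter also giving $|a_i|\lesssim\|\sum_j a_j x^*_{t_j}\|$). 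The interleaved diagonalization you flag as the main obstacle is indeed the technical core, and your sketch of how to simultaneously secure weak-nullness of each node, diagonal norming $>1/2$, and summable off-diagonal decay along branches is sound.
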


We will, using established embedding theorems and a particular method of constructing new Banach spaces with FDDs from old, find spaces with FDDs with the desired properties.  Our usual space for doing so will be the space $Z^V$.  

\begin{definition} If $Z$ is a Banach space with FDD $E=(E_n)$ and $V$ is a Banach space with normalized, $1$-unconditional basis $(v_n)$, we define a new norm on $c_{00}\Bigl(\underset{n=1}{\overset{\infty}{\bigoplus}}E_n\Bigr)$ by $$\|z\|_{Z^V}=\max \Bigl\{\Bigl\|\displaystyle\sum_{i=1}^n \|P^E_{[m_{i-1},m_i)}z\|_Z v_{m_{i-1}}\Bigr\|_V: 1\leq m_0<\ldots<m_n, n,m_i\in \mathbb{N}\Bigr\}.$$

We then let $Z^V$ be the completion of $c_{00}\Bigl(\underset{n=1}{\overset{\infty}{\bigoplus}}E_n\Bigr)$ under the norm $\|\cdot\|_{Z^V}$.  Then $(E_n)$ is an FDD for $Z^V$ with $K(E,Z^V)\leq K(E,Z)$.  We connect some properties of the FDD $(E_n)$ for $Z^V$ and the basis $(v_n)$ of $V$.  

\end{definition}

\begin{proposition}\cite[Corollary 7, Lemma 8]{OSZ1}

Let $V$ be a Banach space with a normalized, $1$-unconditional basis $(v_n)$, and $Z$ a space with FDD $(E_n)$.  \begin{enumerate}
\item If $(v_n)$ is boundedly-complete, $(E_n)$ is a boundedly-complete FDD for $Z^V$.
\item If $(v_n)$ is a shrinking basis for $V$ and $(E_n)$ is a shrinking FDD for $Z$, then $(E_n)$ is a shrinking FDD for $Z^V$.  \end{enumerate} \end{proposition}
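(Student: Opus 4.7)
The plan is to prove parts (1) and (2) separately, each time exploiting the partition definition of $\|\cdot\|_{Z^V}$ to import the corresponding property of $(v_n)$ in $V$.

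For part (1), suppose for contradiction that $\sup_N \|s_N\|_{Z^V} \leq M$ (with $s_N = \sum_{n \leq N} z_n$, $z_n \in E_n$) yet $(s_N)$ is not Cauchy in $Z^V$; pass to a subsequence $(N_k)$ with $\|u_k\|_{Z^V} > \epsilon$ for $u_k := s_{N_{k+1}} - s_{N_k}$. For each $k$, I would choose a partition $p^k_0 < \ldots < p^k_{j_k}$ witnessing $\|u_k\|_{Z^V} > \epsilon$, arranged so that $p^k_0 = \min \text{supp}_E u_k$ and $p^k_{j_k} = \min \text{supp}_E u_{k+1}$; this is possible because the supports of the $u_k$ are strictly increasing. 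Setting $w_k := \sum_i \|P^E_{[p^k_{i-1}, p^k_i)} u_k\|_Z\, v_{p^k_{i-1}}$, we have $\|w_k\|_V > \epsilon$, and the local partitions concatenate (identifying $p^k_{j_k} = p^{k+1}_0$) into a strictly increasing master partition whose intervals exactly recover the local ones. For $s_N$ with $N$ large, the partition formula then evaluates to $\sum_{k:\,N_{k+1}\leq N} w_k$ in $V$, with $V$-norm at most $\|s_N\|_{Z^V} \leq M$. Hence $(w_k)$ is a block sequence in $V$ with uniformly bounded partial sums, so bounded completeness of $(v_n)$ forces $\sum_k w_k$ to converge and $\|w_k\|_V \to 0$, contradicting $\|w_k\|_V > \epsilon$.

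For part (2), it suffices to show that every normalized block sequence $(x_n)$ with respect to $(E_n)$ in $Z^V$ is weakly null, since this characterizes $(E_n)$ being shrinking. For each $n$, pick a witness partition $(p^n_i)$ for $\|x_n\|_{Z^V}$ with $p^n_0 \geq \min \text{supp}_E x_n$ and $p^n_{j_n-1} \leq \max \text{supp}_E x_n$, and set $w_n := \sum_i \|P^E_{[p^n_{i-1}, p^n_i)} x_n\|_Z\, v_{p^n_{i-1}}$; since the $x_n$ have disjoint $E$-supports, $(w_n)$ is a bounded block sequence in $V$, so shrinking of $(v_n)$ gives $w_n \to 0$ weakly in $V$. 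Simultaneously $\|\cdot\|_Z \leq \|\cdot\|_{Z^V}$ shows $(x_n)$ is a bounded block sequence in $Z$, hence weakly null in $Z$ by shrinking of $(E_n)$ there. Combining these two weak-null behaviours via the partition formula—applied to the coordinate restrictions $f|_{E_k} \in E_k^*$ of an arbitrary $f \in (Z^V)^*$—then yields $f(x_n) \to 0$.

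The main technical obstacle in (1) is arranging the witness partitions so that the concatenation produces the clean block sum $\sum_k w_k$ with no spurious contributions from intervals spanning gaps between consecutive $u_k$; the choice $p^k_{j_k} = \min \text{supp}_E u_{k+1}$ is exactly what collapses those potential gap terms. In (2) the subtlety is passing from the separate weak-null information in $V$ and in $Z$ to weak nullity in $Z^V$ itself, since $(Z^V)^*$ is not a priori identifiable with a familiar dual. Both parts could alternatively be unified via a duality identification of the form $(Z^V)^{(*)} \cong (Z^{(*)})^{V^{(*)}}$, which would interchange bounded completeness and the shrinking property through the classical duality between the two notions.
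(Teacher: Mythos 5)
Your proposal has two genuine gaps, one in each part.

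In part (1), the claim that the witness partition for $u_k$ can be taken with first point $p^k_0 = \min \text{supp}_E u_k$ "because the supports are strictly increasing" is false as stated. If the maximizing (or witnessing) partition has $p^k_0 < \min \text{supp}_E u_k$, replacing $p^k_0$ by $\min \text{supp}_E u_k$ leaves the coefficient norm $\|P^E_{[p^k_0,p^k_1)}u_k\|_Z$ unchanged but swaps $v_{p^k_0}$ for $v_{\min\text{supp}_E u_k}$, which can decrease the $V$-norm since $(v_n)$ is only $1$-unconditional, not spreading or right-dominant. This is precisely the delicacy Remark 2.12 of this paper is devoted to, where the example $V = \mathbb{R} \oplus_1 c_0$ shows a genuine factor of $2$ is lost. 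Your argument survives after inserting that factor-of-$2$ correction (as in the proof of Lemma 2.11: split off the first term by the triangle inequality, then absorb both halves by unconditionality, to get a witness partition starting at $\min \text{supp}_E u_k$ with $\|w_k\|_V \geq \|u_k\|_{Z^V}/2 > \epsilon/2$), but as written the step is wrong, and the rest of the concatenation argument is built on it.

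In part (2), the gap is more serious. Your reduction to "every normalized block sequence in $Z^V$ is weakly null" is correct, and the two pieces of weak-null information you extract (that $(w_n)$ is weakly null in $V$ and $(x_n)$ is weakly null in $Z$) are both valid. But the "combining" step, "applied to the coordinate restrictions $f|_{E_k}$ of an arbitrary $f \in (Z^V)^*$ ... yields $f(x_n) \to 0$," is not an argument. The quantity one needs to control is $f(x_n) = (P^{E^*}_{\text{supp}_E x_n} f)(x_n)$, and $\|P^{E^*}_{[a,\infty)}f\|_{(Z^V)^*} \to 0$ as $a \to \infty$ is exactly the shrinking property you are trying to prove, so the suggestion is circular. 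The fact that $(x_n)$ is weakly null in $Z$ only tells you $g(x_n) \to 0$ for $g \in Z^*$, and $Z^*$ is in general a proper subspace of $(Z^V)^*$. You correctly flag that "$(Z^V)^*$ is not a priori identifiable with a familiar dual," and you correctly note that the duality identification $(Z^V)^{(*)} \cong (Z^{(*)})^{V^{(*)}}$ would finish both parts at once, but you do not establish it. As it stands, part (2) is a plan with an unresolved central step, not a proof; the first-point subtlety from part (1) also resurfaces in your requirement $p^n_0 \geq \min \text{supp}_E x_n$ for the witness partition.
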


We conclude this section with generalizations of Lemmas $2$ and $10$ from \cite{OSZ1}.  The proofs are very similar, but we include them for completeness.  The difference is that we do not assume block stability of $(v_n)$, but only that $(v_n)$ satisfies lower block estimates in itself.  This means that for a normalized block sequence $(x_n)$ in $V$ with $\min\text{supp\ } x_n=m_n$, there is some $C\geq 1$ so that $(x_n)$ $C$-dominates $(v_{m_n})$.  We will use the following abbreviation.  If $M=(m_n)$ is a subsequence of $\mathbb{N}$, and $(v_n)$ is a basis for the Banach space $V$, we let $V_M$ denote the closed linear span of $(v_{m_n})$ in $V$.  

\begin{lemma}
If $V$ is a Banach space with normalized, $1$-unconditional basis $(v_n)$ which satisfies subsequential $C$-$V$-lower block estimates in itself, and $Z$ is a space with FDD $(E_n)$, then $(E_n)$ satisfies subsequential $2C$-$V$-lower block estimates in $Z^V$.  \end{lemma}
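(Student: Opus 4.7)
The plan is to fix a normalized block sequence $(z_n)$ of $E$ in $Z^V$ with $m_n = \min \text{supp}_E z_n$, scalars $(a_n) \in c_{00}$, set $z = \sum a_n z_n$, and exhibit a single partition of $\mathbb{N}$ whose substitution into the $\|\cdot\|_{Z^V}$ formula produces a lower bound of the form $\tfrac{1}{2C}\bigl\|\sum a_n v_{m_n}\bigr\|_V$ for $\|z\|_{Z^V}$. I will build this partition by concatenating, for each $n$ with $a_n \neq 0$, a partition $m_n = \mu^n_0 < \mu^n_1 < \cdots < \mu^n_{p_n} = m_{n+1}$ of $[m_n,m_{n+1})$ for which the $V$-vector
\[
u_n = \sum_{j=1}^{p_n} \|P^E_{[\mu^n_{j-1},\mu^n_j)} z_n\|_Z v_{\mu^n_{j-1}}
\]
satisfies $\|u_n\|_V \geq 1/2$.

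The heart of the argument is producing $u_n$ starting at $m_n$ with $\|u_n\|_V \geq 1/2$. Given $\delta > 0$, I would begin with a partition $\nu_0 < \cdots < \nu_p$ of $[1, m_{n+1}]$ whose $V$-vector has norm at least $1 - \delta$ (truncation on the right is free since $z_n$ is supported in $[m_n, m_{n+1})$). Let $j'_0$ be the least index with $\nu_{j'_0} \geq m_n$. In the benign cases, where $\nu_{j'_0} = m_n$ or $j'_0 = 0$, one either drops the zero-contribution points strictly below $m_n$ or prepends $m_n$, and $1$-unconditionality of $(v_n)$ gives $\|u_n\|_V \geq 1 - \delta$. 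The delicate case is $\nu_{j^*} < m_n < \nu_{j^*+1}$ with $j^* := j'_0 - 1 \geq 0$: the original $V$-vector collapses to $cv_{\nu_{j^*}} + w$ with $c := \|P^E_{[m_n,\nu_{j^*+1})} z_n\|_Z$ and $w := \sum_{j > j^*+1}\|P^E_{[\nu_{j-1},\nu_j)} z_n\|_Z v_{\nu_{j-1}}$, while the modified partition $\{m_n,\nu_{j^*+1},\ldots,\nu_p\}$ produces $V$-vector $cv_{m_n} + w$. The triangle inequality gives $c + \|w\|_V \geq \|cv_{\nu_{j^*}} + w\|_V \geq 1-\delta$, hence $\max(c,\|w\|_V) \geq (1-\delta)/2$; and since $m_n$ lies strictly below $\text{supp}_V w$, $1$-unconditionality yields $\|cv_{m_n} + w\|_V \geq \max(c,\|w\|_V) \geq (1-\delta)/2$. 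Sending $\delta \to 0$ produces $\|u_n\|_V \geq 1/2$.

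With the partitions in hand, concatenation yields a legitimate partition for $\|\cdot\|_{Z^V}$; since the $z_n$ have disjoint $E$-supports lying in disjoint intervals $[m_n,m_{n+1})$, the projections decouple and
\[
\|z\|_{Z^V} \geq \Bigl\|\sum_n |a_n| u_n\Bigr\|_V = \Bigl\|\sum_n a_n u_n\Bigr\|_V,
\]
the equality coming from $1$-unconditionality applied to the $(v_k)$-expansion of $\sum_n a_n u_n$. Each $u_n$ has a strictly positive coefficient at $m_n$ (as $P^E_{m_n} z_n \neq 0$), so $(u_n)$ is a block sequence in $V$ with $\min \text{supp}_V u_n = m_n$. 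Applying the hypothesis that $(v_n)$ satisfies subsequential $C$-$V$-lower block estimates in itself to $(u_n/\|u_n\|_V)$ yields $\bigl\|\sum_n a_n u_n\bigr\|_V \geq \tfrac{1}{C}\bigl\|\sum_n a_n \|u_n\|_V v_{m_n}\bigr\|_V$, and $\|u_n\|_V \geq 1/2$ combined with $1$-unconditionality gives $\bigl\|\sum_n a_n \|u_n\|_V v_{m_n}\bigr\|_V \geq \tfrac{1}{2}\bigl\|\sum_n a_n v_{m_n}\bigr\|_V$, completing the chain. I expect the main obstacle to be precisely the delicate case above: without a block-stability or dominance hypothesis on $(v_n)$, one cannot freely shift the leftmost nonzero left endpoint from $\nu_{j^*}$ to $m_n$ while preserving the $V$-norm, and the lemma's factor of $2$ is the unavoidable cost of that shift, captured by $\max(c,\|w\|_V) \geq (c+\|w\|_V)/2$.
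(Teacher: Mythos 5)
Your proposal is correct and follows essentially the same route as the paper: the paper likewise chooses an (exactly) optimal partition for each $z_n$, normalizes it to begin at $k^{(n)}_0 \leq m_n < k^{(n)}_1$, shifts the first endpoint to $m_n$, and uses the triangle inequality together with $1$-unconditionality to show the resulting $y_n$ (your $u_n$) has norm at least $1/2$, then concatenates and applies the lower-block-estimate hypothesis. Your explicit case split (benign vs.\ delicate) and the $\delta$-slack are only cosmetic differences, and your closing observation that the factor of $2$ is the unavoidable cost of shifting the leftmost endpoint matches the paper's remark (with the $\mathbb{R}\oplus_1 c_0$ example) that the constant is sharp.
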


\begin{proof}
Fix a normalized block sequence $(z_n)$ in $Z^V$ and $(a_n)\in c_{00}$.  Let $m_n=\min \text{supp} z_n$.  For each $n\in \mathbb{N}$, fix an increasing sequence of natural numbers $(k^{(n)}_i)_{i=0}^{\ell_n}$ so that $$1=\Bigl\|\displaystyle\sum_{i=1}^{\ell_n} \|P^E_{[k_{i-1}^{(n)},k_i^{(n)})}z_n\|_Z v_{k_{i-1}^{(n)}}\Bigr\|_V.$$

Because the basis $(v_n)$ is bimonotone, we can assume that $k_0^{(n)}\leq m_n<k_1^{(n)}$ and $k^{(n)}_{\ell_n}=m_{n+1}$.  For each $n$ and $1\leq i \leq n_\ell$, put $m_i^{(n)}=k^{(n)}_i$.  Put $m^{(n)}_0=m_n$.  Then because $k^{(n)}_i=m^{(n)}_i$ for each $i>0$, we get $$\Bigl\|\displaystyle\sum_{i=2}^{\ell_n} \|P^E_{[m^{(n)}_{i-1},m^{(n)}_i)}z_n\|_Z v_{m_{i-1}^{(n)}}\Bigr\|_V=\Bigl\|\displaystyle\sum_{i=2}^{\ell_n} \|P^E_{[k^{(n)}_{i-1},k^{(n)}_i)}z_n\|_Z v_{m_{i-1}^{(n)}}\Bigr\|_V.$$

Using the triangle inequality and noting that $P^E_{[m^{(n)}_0,m^{(n)}_1)}z_n=P^E_{[k^{(n)}_0,k^{(n)}_1)}z_n$, we see that \begin{align*} 1&\leq 2\max \Bigl\{\|P^E_{[m^{(n)}_0,m^{(n)}_1)}z_n\|_Z, \Bigl\|\displaystyle\sum_{i=2}^{\ell_n} \|P^E_{[k^{(n)}_{i-1},k^{(n)}_i)}z_n\|_Z v_{m_{i-1}^{(n)}}\Bigr\|_V\Bigr\} \\ &\leq 2\Bigl\|\displaystyle\sum_{i=1}^{\ell_n} \|P^E_{[m^{(n)}_{i-1}, m^{(n)}_i)}z_n\|_Z v_{m_{i-1}^{(n)}}\Bigr\|_V.\end{align*}

Let $y_n=\displaystyle\sum_{i=1}^{\ell_n} \|P^E_{[m^{(n)}_{i-1}, m^{(n)}_i)}z_n\|_Z v_{m_{i-1}^{(n)}}$.  We note that $\min \text{supp} y_n=m_n$.  We have already shown that $\|y_n\|\geq \frac{1}{2}$.  Let $(a_i)\in c_{00}$ and let $(k_i)_{i=0}^\ell$ be the concatenation of the sequences $(m_i^{(n)})_{i=0}^{\ell_n}$ for each $1\leq n\leq M=\max \text{supp} (a_j)$.  For $z=\displaystyle\sum_{i=1}^\infty a_nz_n$, we get that 

\begin{align*} \|z\|_{Z^V} & \geq \Bigl\|\displaystyle\sum_{i=1}^\ell \|P^E_{[k_{i-1},k_i)}z\|_Z v_{k_{i-1}}\Bigr\|_V \\ &  =\Bigl\|\displaystyle\sum_{n=1}^\infty \displaystyle\sum_{i=1}^{\ell_n} a_n\|P^E_{[m^{(n)}_{i-1}, m^{(n)}_i)}z_n\|_Z v_{m^{(n)}_{i-1}}\Bigr\|_V \\ & =\Bigl\|\displaystyle\sum_{n=1}^\infty a_n y_n\Bigr\|_V\geq \frac{1}{C} \Bigl\|\displaystyle\sum_{n=1}^\infty a_n\|y_n\|v_{m_n}\Bigr\|_V\geq \frac{1}{2C}\Bigl\|\displaystyle\sum_{n=1}^\infty a_n v_{m_n}\Bigr\|_V.\end{align*}

This gives the result.

\end{proof}

\begin{remark} We cannot omit the initial part of the proof above in which we pass from the $k^{(n)}_i$ to the $m^{(n)}_i$.  That is, we could not have assumed that $k^{(n)}_0=\min \text{supp} z_n$ and $1=\Bigl\|\displaystyle\sum_{i=1}^{\ell_n} \|P^E_{[k_{i-1}^{(n)},k_i^{(n)})}z_n\|_Z v_{k_{i-1}^{(n)}}\Bigr\|_V$.  In fact, the factor of $2$ which occurs above is sharp.  

To see this, let $V=\mathbb{R}\oplus_1 c_0$ and let $(v_n)$ denote the natural basis for $V$.  Then $(v_n)$ is normalized and$1$-unconditional.  If we let $(e_n)$ denote the canonical $c_0$ basis, and let $z_n=\frac{1}{2}e_{2n}+\frac{1}{2}e_{2n+1}$, $(z_n)$ is a normalized block sequence in $c_0^V$.  To see this, observe that $$\Bigl\|\|P_{[1,2n+1)}z_n\|_{c_0}v_1+\|P_{[2n+1,2n+2)}z_n\|_{c_0}v_{2n+1}\Bigr\|_V=\frac{1}{2}+\frac{1}{2}=1.$$

But if $2n\leq k_0<k_1<\ldots<k_\ell$, then \begin{align*} \Bigl\|\displaystyle\sum_{i=1}^{\ell} \|P_{[k_{i-1},k_i)}z_n\|_{c_0} v_{k_{i-1}}\Bigr\|_V=\Bigl\|\displaystyle\sum_{i=1}^{\ell} \|P_{[k_{i-1},k_i)}z_n\|_{c_0} v_{k_{i-1}}\Bigr\|_{c_0}\leq \frac{1}{2}.\end{align*}

\end{remark}

\begin{lemma}
Let $V$ be a Banach space with normalized, $1$-unconditional basis $(v_n)$ which satisfies subsequential $V$-lower block estimates in $V$. If $M=(m_n)$ is a subsequence of $\mathbb{N}$ and $Z$ is a space with FDD $E=(E_n)$ satisfying subsequential $V_M$-lower block estimates in $Z$, then $W=Z\oplus_\infty V_{\mathbb{N}\setminus M}$ has an FDD satisfying subsequential $V$-lower block estimates in $W$.  \end{lemma}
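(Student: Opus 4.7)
The plan is to define an interleaved FDD on $W = Z \oplus_\infty V_{\mathbb{N}\setminus M}$ that places $E_j$ at index $m_j \in M$ and $\mathrm{span}\{v_k\}$ at each index $k \in \mathbb{N}\setminus M$. Concretely, set $F_{m_j} = E_j$ as a subspace of the $Z$-summand, and $F_k = \mathrm{span}\{v_k\}$ as a subspace of the $V_{\mathbb{N}\setminus M}$-summand for $k \notin M$. Since $M$ and $\mathbb{N}\setminus M$ partition $\mathbb{N}$, each $w = z + u \in W$ has a unique expansion $w = \sum_k w^{(k)}$ with $w^{(k)} \in F_k$, and interval projections on $(F_k)$ decompose into interval projections on $(E_j)$ in $Z$ together with interval projections on $(v_k)$ in $V$, so $(F_k)$ is an FDD for $W$ with projection constant at most $K(E, Z)$.

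Let $C_1$ and $C_2$ be the constants of the two hypothesized lower block estimates. Given a normalized block sequence $(w_n)$ in $W$ with respect to $(F_k)$, write $w_n = z_n + u_n$ with $z_n \in Z$ and $u_n \in V_{\mathbb{N}\setminus M}$, and let $j_n = \min \text{supp}_E z_n$ (when $z_n \neq 0$) and $\ell_n = \min \text{supp}_V u_n$ (when $u_n \neq 0$). Then $k_n := \min \text{supp}_F w_n = \min(m_{j_n}, \ell_n)$. The key construction is the auxiliary vector
\[ y_n := \|z_n\|_Z\, v_{m_{j_n}} + u_n \in V, \]
with the first summand interpreted as zero when $z_n = 0$. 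Its two summands have disjoint $V$-supports (lying respectively in $M$ and $\mathbb{N}\setminus M$), the full $V$-support of $y_n$ sits inside $\text{supp}_F w_n$, and $\min \text{supp}_V y_n = k_n$; thus $(y_n)$ is a block sequence in $V$. By $1$-unconditionality of $(v_n)$, projecting onto either summand shows $\|y_n\|_V \geq \max(\|z_n\|_Z, \|u_n\|_V) = 1$.

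Applying the hypothesis that $(v_n)$ satisfies subsequential $C_2$-$V$-lower block estimates in $V$ to the normalized block sequence $(y_n/\|y_n\|_V)$ gives
\[ \Bigl\|\sum a_n v_{k_n}\Bigr\|_V \leq C_2 \Bigl\|\sum (a_n/\|y_n\|_V)\, y_n\Bigr\|_V \leq C_2 \Bigl\|\sum a_n y_n\Bigr\|_V, \]
where the second inequality uses $\|y_n\|_V \geq 1$ together with the $1$-unconditionality of the block sequence $(y_n)$ in $V$. Splitting $y_n$ via the triangle inequality and invoking the $V_M$-lower block estimate for $(E_n)$ in $Z$ applied to the normalized block sequence $(z_n/\|z_n\|_Z)$ yields
\[ \Bigl\|\sum a_n y_n\Bigr\|_V \leq \Bigl\|\sum a_n \|z_n\|_Z\, v_{m_{j_n}}\Bigr\|_V + \Bigl\|\sum a_n u_n\Bigr\|_V \leq C_1 \Bigl\|\sum a_n z_n\Bigr\|_Z + \Bigl\|\sum a_n u_n\Bigr\|_V. \]
Because $\|\sum a_n z_n\|_Z$ and $\|\sum a_n u_n\|_V$ are each bounded by $\|\sum a_n w_n\|_W$, combining the two displays gives $\|\sum a_n v_{k_n}\|_V \leq C_2(C_1 + 1)\|\sum a_n w_n\|_W$, which is the desired subsequential $V$-lower block estimate for $(F_k)$ in $W$.

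There is no serious obstacle beyond pinning down the correct auxiliary vector $y_n$: it must be a block in $V$ with minimum support exactly $k_n$ and with norm at least $1$, so that the lower block estimate for $(v_n)$ in $V$ transfers cleanly from $(y_n/\|y_n\|_V)$ back to $(y_n)$, and its split into the $M$-part (handled by the $Z$-estimate) and the $(\mathbb{N}\setminus M)$-part (bounded trivially) lines up precisely with the two hypotheses. With this setup, the argument is just two applications of the lower block estimates glued by the triangle inequality.
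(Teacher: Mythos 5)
Your proof is correct, and the overall strategy---build the interleaved FDD, split a block vector into its $Z$-part and $V_{\mathbb{N}\setminus M}$-part, control each part by one of the two hypothesized lower block estimates, and glue by the triangle inequality---is the same as the paper's. What is genuinely different is the organizing device: you first bundle the two pieces into the auxiliary block $y_n = \|z_n\|_Z\, v_{m_{j_n}} + u_n$ in $V$, which by construction has $\min\text{supp}_V\, y_n = k_n$ and norm at least $1$, and only \emph{then} apply the $V$-in-itself lower block estimate a single time, splitting afterward by the triangle inequality. The paper splits first and then must invoke the $V$-in-itself lower block estimate twice (once on the $V_{\mathbb{N}\setminus M}$-part, and once more via Proposition~$2.4$ to realign the indices $m_{p_n}$ produced by the $Z$-estimate with the start indices $b_n$). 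Your ordering removes that realignment step and yields a marginally sharper constant ($C(C+1)$ versus the paper's $2C^2$, writing $C$ for $\max(C_1,C_2)$), at the small cost of checking that $y_n$ really is a block sequence in $V$ with minimum support $k_n$ and norm $\geq 1$, which you verify correctly.
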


\begin{proof}

Let $C$ be such that $(v_n)$ satisfies subsequential $C$-$V$-lower block estimates in $V$ and such that $E$ satisfies subsequential $C$-$V_M$-lower block estimates in $Z$.  

We define an FDD $F=(F_n)$ of $W$ by \begin{displaymath}
   F_n= \left\{
     \begin{array}{rr}
       \text{span}(v_n) & : n\notin M,\\
       E_k & : n=m_k. 
     \end{array}
   \right.
\end{displaymath} 

Let $P$ and $Q$ be the projections onto $Z$ and $V_{\mathbb{N}\setminus M}$, respectively.  Let $(z_n)$ be a normalized block sequence in $W$, and let $b_n=\min \text{supp}_F z_n$.  Let $x_n=Pz_n$, $y_n=Qz_n$.  Let $N_1=\{n: x_n\neq 0\}$, $N_2=\{n: y_n\neq 0\}$.  We note that $(y_n)_{n\in N_2}$ is a block sequence in $V$ with $b_n\leq \min \text{supp}_V y_n<b_{n+1}$ for each $n\in N_2$.  Applying Proposition $2.4$, we get that 

$$\Bigl\|\displaystyle\sum_{n\in N_2} a_ny_n\Bigr\|_V\geq C^{-1}\Bigl\|\displaystyle\sum_{n\in N_2} a_n \|y_n\|v_{b_n}\Bigr\|_V.$$

Next, we note that $(x_n)_{n\in N_1}$ is a block sequence in $E$.  For $n\in N_1$, let $p_n=\min \text{supp}_E x_n$.  Unravelling the definition of $V_M$-lower block estimates in $Z$ gives that $$\Bigl\|\displaystyle\sum_{n\in N_1} a_n x_n\Bigr\|_Z\geq C^{-1}\Bigl\|\displaystyle\sum_{n\in N_1}a_n \|x_n\|_Z v_{m_{p_n}}\Bigr\|_V.$$

We note that, by construction, $b_n\leq m_{p_n}<b_{n+1}$.  Applying Proposition $2.4$ to $(v_{b_n})_{n\in N_1}$ and $(v_{m_{p_n}})_{n\in N_1}$ gives that 

$$\Bigl\|\displaystyle\sum_{n\in N_1} a_n\|x_n\|_Z v_{m_{p_n}}\Bigr\|_V\geq C^{-1}\Bigl\|\displaystyle\sum_{n\in N_1} a_n \|x_n\|_Zv_{b_n}\Bigr\|_V.$$

Letting $z=\displaystyle\sum_{n=1}^\infty a_nz_n$, we get that \begin{align*} \Bigl\|\displaystyle\sum_{n=1}^\infty a_n v_{b_n}\Bigr\|_V & \leq \Bigl\|\displaystyle\sum_{n=1}^\infty a_n(\|x_n\|_Z+\|y_n\|_V)v_{b_n}\Bigr\|_V \\ & \leq \Bigl\|\displaystyle\sum_{n\in N_1} a_n \|x_n\|_Zv_{b_n}\Bigr\|_V+\Bigl\|\displaystyle\sum_{n\in N_2} a_n\|y_n\|_V v_{b_n}\Bigr\|_V \\ & \leq 2\max \Bigl\{\Bigl\|\displaystyle\sum_{n\in N_1} a_n\|x_n\|_Z v_{b_n}\Bigr\|_V,\Bigl\|\displaystyle\sum_{n\in N_2} a_n\|y_n\|_V v_{b_n}\Bigr\|_V\Bigr\} \\ & \leq 2 \max \Bigl\{C^2\Bigl\|\displaystyle\sum_{n\in N_1} a_nx_n\Bigr\|_Z,C \Bigl\|\displaystyle\sum_{n\in N_2} a_ny_n\Bigr\|_V\Bigr\}\\ &=2\max \{C^2\|Pz\|_Z,C\|Qz\|_V\}\leq 2C^2 \|z\|_W.\end{align*}

\end{proof}

We collect a fact from \cite{FOSZ} relating the concept of infinite games to trees and branches.  For more information about these infinite games, see \cite{OS1}.  First we must recall some of their notation.  If $X$ is a Banach space, $A\subset[\mathbb{N}\times S_X]^\omega$, and $\varepsilon \in(0,1)$, we let $$A_\varepsilon = \Bigl\{(k_n,y_n)\in [\mathbb{N}\times S_X]^\omega: \exists (\ell_n,x_n)\in A \text{\ \ }\ell_n\leq k_n, \|x_n-y_n\|<\varepsilon2^{-n}\text{\ }\forall n\in \mathbb{N}\Bigr\}.$$

In the following proposition, the closure $\overline{A}_\epsilon$ is with respect to the product topology on $[\mathbb{N}\times S_X]^\omega$.  For it, we also need the following definition.  \begin{definition} Let $E=(E_n)$ be an FDD for a Banach space $X$ and let $\overline{\delta}=(\delta_n)$ with $\delta_n\downarrow 0$.  A sequence $(x_n)\subset S_X$ is called a $\overline{\delta}$-\emph{skipped block w.r.t.} $(E_n)$ if there exist integers $1=k_0<k_1<\ldots$ so that for all $n\in \mathbb{N}$, $$\|P^E_{(k_{n-1}, k_n)}y_n - y_n\|<\delta_n.$$  \end{definition}

\begin{proposition}\cite[Proposition 2.6]{FOSZ} Let $X$ be an infinite-dimensional closed subspace of a dual space $Z$ with boundedly-complete FDD $(E_n)$.  Let $A\subset [\mathbb{N}\times S_X]^\omega$.  The following are equivalent.  

\begin{enumerate} \item For all $\varepsilon>0$ there exists $(K_n)\subset \mathbb{N}$ with $K_1<K_2<\ldots$, $\overline{\delta}=(\delta_n)\subset (0,1)$ with $\delta_n\downarrow 0$ and a blocking $F=(F_n)$ of $(E_n)$ such that if $(x_n)\subset S_X$ is a $\overline{\delta}$-skipped block sequence of $(F_n)$ in $Z$ with $\|x_n-P^F_{(r_{n-1},r_n)}\|<\delta_n$ for all $n\in \mathbb{N}$, where $1\leq r_0<r_1\ldots$, then $(K_{r_{n-1}}, x_n)\in \overline{A}_\varepsilon$.

\item for all $\varepsilon>0$, every normalized $w^*$ null even tree in $X$ has a branch in $\overline{A}_\varepsilon$.  \end{enumerate}\end{proposition}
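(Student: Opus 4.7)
The plan is to establish this as a standard equivalence between an asymptotic infinite game condition and a tree property, in the spirit of Odell--Schlumprecht \cite{OS1}. The direction (1) $\Rightarrow$ (2) is the more routine one: one extracts a branch from a given $w^*$-null even tree that is a $\overline{\delta}$-skipped block sequence of the prescribed blocking, and then (1) directly places it in $\overline{A}_\varepsilon$. The reverse direction (2) $\Rightarrow$ (1) is the substantive part; I would argue contrapositively, converting a parametric failure of (1) into a $w^*$-null even tree with no branch in $\overline{A}_\varepsilon$.

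For (1) $\Rightarrow$ (2), fix $\varepsilon > 0$ and let $(K_n)$, $\overline{\delta} = (\delta_n)$, and the blocking $F = (F_n)$ be as given by (1). Given a normalized $w^*$-null even tree $(x_t)_{t \in T^\text{even}_\infty}$ in $X$, I would inductively construct a branch $(k_{2i-1}, x_{(k_1,\ldots,k_{2i})})$ together with integers $1 = r_0 < r_1 < \cdots$. Having chosen $r_{i-1}$ and $(k_1, \ldots, k_{2i-2})$, I exploit the $w^*$-nullness of the node indexed by $(k_1, \ldots, k_{2i-1}, \cdot)$ and the boundedly-complete FDD structure (a standard ``killing the tail'' argument, using that $X \subset Z = (Z^{(*)})^*$) to pick first $k_{2i-1} \geq K_{r_{i-1}}$ and then $k_{2i}$ together with $r_i > r_{i-1}$ satisfying $\|x_{(k_1,\ldots,k_{2i})} - P^F_{(r_{i-1}, r_i)} x_{(k_1,\ldots,k_{2i})}\| < \delta_i$. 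The resulting branch is a $\overline{\delta}$-skipped block sequence of $(F_n)$ meeting the hypothesis of (1), so $(K_{r_{i-1}}, x_{(k_1,\ldots,k_{2i})}) \in \overline{A}_\varepsilon$, and by monotonicity of $\overline{A}_\varepsilon$ in the first coordinate we get $(k_{2i-1}, x_{(k_1,\ldots,k_{2i})}) \in \overline{A}_\varepsilon$ as well.

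For (2) $\Rightarrow$ (1), I would argue by contradiction. Suppose (1) fails for some $\varepsilon > 0$. Then for every candidate triple $((K_n), \overline{\delta}, (F_n))$, there exists a $\overline{\delta}$-skipped block sequence $(x_n)$ of $(F_n)$ in $X$, with associated intervals $(r_{n-1}, r_n)$, such that $(K_{r_{n-1}}, x_n) \notin \overline{A}_\varepsilon$. I would enumerate a countable exhaustive family of such triples and, by a diagonal construction indexed by nodes of $T^\text{even}_\infty$, glue the corresponding bad skipped block sequences into a single normalized even tree in $X$, arranged so that every branch realizes (up to perturbation of size $< \varepsilon 2^{-n}$) one of the prescribed bad skipped blocks. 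Finally, I would pass to a full subtree whose nodes are $w^*$-null: since $Z^{(*)}$ is separable, bounded subsets of $Z = (Z^{(*)})^*$ are $w^*$-sequentially compact, so by iteratively extracting $w^*$-convergent subsequences, subtracting limits, and renormalizing, one obtains a normalized $w^*$-null even tree each of whose branches lies outside $\overline{A}_\varepsilon$, contradicting (2).

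The main obstacle is the bookkeeping in (2) $\Rightarrow$ (1): one must arrange that (a) every branch of the constructed tree is a small enough perturbation of some prescribed bad skipped block sequence for a triple appearing in the enumeration, and (b) the $w^*$-null extraction plus subsequent renormalization does not push any branch back into $\overline{A}_\varepsilon$. Both tasks rely on the closedness and first-coordinate monotonicity of $\overline{A}_\varepsilon$, and on the freedom to shrink $\overline{\delta}$ and to enlarge $(K_n)$ in (1), which together let the tree construction absorb the errors introduced by the $w^*$-null normalization procedure.
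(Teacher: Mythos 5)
The paper does not supply a proof of this proposition; it is quoted directly from \cite[Proposition~2.6]{FOSZ}, which is the $w^*$-adaptation of the infinite-game machinery of Odell--Schlumprecht \cite{OS1}. Your sketch of $(1)\Rightarrow(2)$ matches that source in substance: the killing-the-tail induction is available because nodes are $w^*$-null (so $P^F_{[1,r_{i-1}]}$-projections can be made small along a node) and because $P^F_{[1,n]}x\to x$ for each fixed $x\in Z$, and the upward monotonicity of $\overline{A}_\varepsilon$ in the first coordinate lets you pass from $K_{r_{i-1}}$ to any $k_{2i-1}\geq K_{r_{i-1}}$. That direction is fine.

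The sketch of $(2)\Rightarrow(1)$ has a real gap. For a fixed $\varepsilon$, the negation of $(1)$ hands you, for \emph{each} triple $((K_n),\overline\delta,F)$, one bad $\overline\delta$-skipped block; it does not hand you the branching structure a tree requires. To build an even tree every branch of which is bad you need, at each node, infinitely many mutually compatible bad continuations, and the negation of $(1)$ provides no such thing. There is also no "countable exhaustive family" of triples to enumerate --- the set of blockings of $(E_n)$ alone is uncountable, and bad sequences for unrelated triples cannot in general be spliced coherently. What actually drives this implication in \cite{OS1} and \cite{FOSZ} is determinacy (Gale--Stewart for closed games) of the associated vector game: $(2)$ rules out a winning strategy for the "vector" player against $\overline{A}_\varepsilon$; determinacy then yields a winning strategy for the "subspace" player; and a separate compactness/stabilization argument converts that strategy into the simple blocking form of $(1)$. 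Your direct gluing construction is precisely an attempt to do by hand what determinacy does for you, and without it the step does not close. A secondary point: the final extraction of a $w^*$-null subtree with subtraction of limits and renormalization is both unnecessary (a $\overline\delta$-skipped block with respect to a boundedly-complete FDD of a dual space is already essentially $w^*$-null) and dangerous, since, as you note yourself, the perturbation could push branches back into $\overline{A}_\varepsilon$.
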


\section{Schreier Families, Schreier Spaces}

Throughout, we will assume subsets of $\mathbb{N}$ are written in increasing order.  Let $[\mathbb{N}]^{<\omega}$ denote the set of all finite subsets of $\mathbb{N}$, and $[\mathbb{N}]$ the set of all infinite subsets of $\mathbb{N}$.  We associate a set $F$ with the function $1_F\in \{0,1\}^\mathbb{N}$ and consider this space with the product topology.  We consider the families $[\mathbb{N}],[\mathbb{N}]^{<\omega}$ as being ordered by extension.  That is, the predecessors of an element are its initial segments.  We write $E\leq F$ if $\max E\leq \min F$.  We write $n\leq F$ if $n\leq \min F$.  By convention, $\min \varnothing=\infty$, $\max \varnothing =0$.  A family $\mathcal{F}\subset [\mathbb{N}]^{<\omega}$ is called \emph{hereditary} if, whenever $E\in \mathcal{F}$ and $F\subset E$, $F\in \mathcal{F}$.  Last, note that a hereditary family is compact if and only if it contains no strictly ascending chains.  

Given two (finite or infinite) subsequences $(k_n),(\ell_n)\subset \mathbb{N}$ of the same length, we say $(\ell_n)$ is a \emph{spread} of $(k_n)$ if $k_n\leq \ell_n$.  We call a family $\mathcal{F}\subset [\mathcal{N}]^{<\omega}$ \emph{spreading} if it contains all spreads of its elements.  

We next recall the definitions of \emph{the Schreier families}.  Let $$S_0=\bigl\{\{n\}:n\in \mathbb{N}\bigr\}\cup \{\varnothing\}.$$

Next, let $\alpha<\omega_1$ and assume that $S_\beta$ has been defined for all ordinals $\beta\leq \alpha$.  Let $$S_{\alpha+1}=\Bigl\{\underset{n=1}{\overset{m}{\bigcup}}E_n: m\leq E_1<\ldots<E_m, E_n\in S_\alpha \text{\ for all\ }n\Bigr\}.$$

If $\alpha<\omega_1$ is a limit ordinal, take $\alpha_n$ so that $\alpha_n\uparrow \alpha$.  Define $$S_\alpha=\{F:\exists n\leq F\in S_{\alpha_n}\}.$$

The Schreier families, thus defined, are compact, hereditary, and spreading.  Note that for $\alpha$ a limit ordinal, $S_\alpha$ depends upon the choice of the sequence $(\alpha_n)$.  This will not affect the properties of $S_\alpha$ relied upon in this paper.

Recall that $c_{00}$ denotes all finitely nonzero sequences in $\mathbb{R}$.  For $x=(x_n)\in c_{00}$ and $E\in [\mathbb{N}]^{<\omega}$, we define $Ex=(\chi_E(n)x_n)$, the projection of $x$ onto $E$.  For a countable ordinal $\alpha$, define the norm $\|\cdot\|_{\alpha}$ on $c_{00}$ by $$\|x\|_\alpha=\underset{E\in S_\alpha}{\max}\|Ex\|_1.$$

Here, $\|\cdot\|_1$ denotes the $\ell_1$ norm.  We let $X_\alpha$ be the completion of $c_{00}$ under the norm $\|\cdot\|_\alpha$, and call this space the \emph{Schreier space of order }$\alpha$.  We note that the canonical basis $(e_n)$ for $c_{00}$ becomes a normalized, $1$-unconditional basis for $X_\alpha$.  Moreover, since the Schreier families are spreading, $X_\alpha$ is $1$-right dominant for each $\alpha<\omega_1$.

\begin{proposition}
The basis $(e_n)$ of $X_\alpha$ satisfies subsequential $2$-$X_\alpha$-upper block estimates in itself.  \end{proposition}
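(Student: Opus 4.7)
The plan is to test the norm directly against an arbitrary Schreier set and exploit the hereditariness and spreading properties of $S_\alpha$ simultaneously. Fix a normalized block sequence $(z_n)$ in $X_\alpha$, set $m_n = \min \operatorname{supp} z_n$, let $(a_n) \in c_{00}$, and put $z = \sum_n a_n z_n$. It suffices to show that for every $E \in S_\alpha$,
$$\|Ez\|_1 \leq 2 \Bigl\|\sum_n a_n e_{m_n}\Bigr\|_\alpha,$$
after which taking the max over $E \in S_\alpha$ gives the claim.

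First I would write $E_n = E \cap \operatorname{supp} z_n$. Because the $(z_n)$ are block, their supports are disjoint, so
$$\|Ez\|_1 = \sum_n |a_n|\, \|E_n z_n\|_1.$$
Since $S_\alpha$ is hereditary, each $E_n$ lies in $S_\alpha$, and so $\|E_n z_n\|_1 \leq \|z_n\|_\alpha = 1$. Thus $\|Ez\|_1 \leq \sum_{n \in N} |a_n|$, where $N = \{n : E_n \neq \emptyset\}$, enumerated as $n_0 < n_1 < \cdots < n_k$.

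The key step is to produce a Schreier set that controls the tail $\{n_1,\dots,n_k\}$. Set $f_j = \min E_{n_j}$ for $0 \leq j \leq k$. The block condition yields $m_{n_j} > \max\operatorname{supp} z_{n_{j-1}} \geq f_{j-1}$ for each $j \geq 1$, so $(m_{n_1},\dots,m_{n_k})$ is a spread of $(f_0,\dots,f_{k-1})$. The latter set is contained in $E \in S_\alpha$, hence lies in $S_\alpha$ by heredity, and the spreading property then gives $\{m_{n_1},\dots,m_{n_k}\} \in S_\alpha$. Testing the $X_\alpha$ norm against this set and against the singleton $\{m_{n_0}\}$ (which also belongs to $S_\alpha$) yields
$$|a_{n_0}| \leq \Bigl\|\sum_n a_n e_{m_n}\Bigr\|_\alpha, \qquad \sum_{j=1}^k |a_{n_j}| \leq \Bigl\|\sum_n a_n e_{m_n}\Bigr\|_\alpha,$$
and summing these two inequalities delivers the factor $2$.

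The main delicacy is the spread step: one cannot directly compare $\{m_{n_j}\}_{j=0}^k$ to a subset of $E$ because $m_{n_j} \leq f_j$ goes the wrong way for spreading. The block condition, however, reverses the comparison by one index ($m_{n_j} > f_{j-1}$), and this forces the separation of the first index $n_0$ from the rest. This index shift is precisely what produces the constant $2$, and no finer analysis of the Schreier structure seems to be needed.
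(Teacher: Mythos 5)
Your proof is correct and uses the same core ideas as the paper's: restrict an optimal $E \in S_\alpha$ to each block support, use hereditariness together with the spreading property (with the one-index shift forced by the block condition) to produce a Schreier set on the $m_n$'s, and pay a factor of $2$ for the separated first index $n_0$. The paper arranges this through a two-stage argument (first a subsequence-comparison lemma for $(e_{m_n})$ vs.\ $(e_{k_n})$ with $m_n \le k_n < m_{n+1}$, then a reduction to it with a case split on whether $|a_{\min N}|$ dominates), whereas you inline everything and simply sum the singleton bound and the spread bound; this is a cosmetic streamlining, not a genuinely different route.
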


\begin{proof}

First, choose sequences $(m_n),(k_n)$ with $m_n\leq k_n<m_{n+1}$.  We prove that $(e_{m_n})$ $2$ dominates $(e_{k_n})$.  Fix $(a_n)\in c_{00}$.  Let $x=\displaystyle\sum_{n=1}^\infty a_ne_{m_n}$, $y=\displaystyle\sum_{n=1}^\infty a_ne_{k_n}$.  Fix $E\in S_\alpha$ so that $\|Ey\|_1=\|y\|_\alpha$.  Let $N=\{n: k_n\in E\}$ and $K=\{k_n:n\in N\}$, $M=\{m_n: n\in N\}$.  Then $K\subset E$, and $K\in S_\alpha$.  

We note that $\|Ey\|_1=\displaystyle\sum_{n\in N} |a_n|$.  

If $|a_{\min N}|>\frac{1}{2} \displaystyle\sum_{n\in N}|a_n|=\frac{1}{2}\|y\|_\alpha$, let $F=\{m_{\min n}\}$.  Otherwise, let $$F=\{m_n: n\in N, n\neq \min N\}.$$  In the second case, $F$ is a spread of $\{k_n: n\in N, n\neq \max N\}\subset K$.  We note that the first case failing means $N$ must have at least two elements, so this set is nonempty.  In either case, $F\in S_\alpha$.  

In the first case, $\|Fx\|_1=|a_{\min N}|>\frac{1}{2}\|y\|_\alpha$.  In the second case, $$\|Fx\|_1=\displaystyle\sum_{\underset{n> \text{\ }\min N}{n\in N}}|a_n|\geq \frac{1}{2}\displaystyle\sum_{n\in N}|a_n|=\frac{1}{2}\|y\|_\alpha.$$

Thus $\|x\|_\alpha \geq \|y\|_\alpha$, and we have that $(e_{m_n})$ $2$ dominates $(e_{k_n})$.  

Next, fix any normalized block sequence $(x_n)\subset X_\alpha$.  Let $\min \text{supp\ } x_n=m_n$.  Fix $(a_n)\in c_{00}$, and choose $E\in S_\alpha$ so that $$\Bigl\|E\displaystyle\sum_{n=1}^\infty a_nx_n\Bigr\|_1=\Bigl\|\displaystyle\sum_{n=1}^\infty a_nx_n\Bigr\|_\alpha.$$

Let $N=\{n: E\cap \text{supp\ } x_n\neq \emptyset\}$.  For each $n\in N$, choose $k_n\in E\cap \text{supp}x_n$.  Then we have \begin{align*} \Bigl\|\displaystyle\sum_{n=1}^\infty a_nx_n\Bigr\|_\alpha & = \Bigl\|E\displaystyle\sum_{n\in N}a_nx_n\Bigr\|_1=\displaystyle\sum_{n\in N}|a_n|\|Ex_n\|_1\\ & \leq \displaystyle\sum_{n\in N}|a_n|=\Bigl\|E\displaystyle\sum_{n\in N}a_n e_{k_n}\Bigr\|_1\leq \Bigl\|\displaystyle\sum_{n\in N}a_ne_{k_n}\Bigr\|_\alpha \\ & \leq \frac{1}{2}\Bigl\|\displaystyle\sum_{n\in N}a_ne_{m_n}\Bigr\|_\alpha \leq \frac{1}{2}\Bigl\|\displaystyle\sum_{n=1}^\infty a_n e_{m_n}\Bigr\|_\alpha. \end{align*}

\end{proof}

\vspace{5mm}

The space $X_\alpha$ is embeddable in $C([1,\omega^{\omega^\alpha}])$ \cite{AJO}.  Consequently, $X_\alpha$ is $c_0$ saturated for each $\alpha<\omega_1$, and it easily follows that $X_\alpha$ cannot be block-stable for $0<\alpha$.  This means $(e_n)$ cannot satisfy subsequential $X_\alpha$-lower block estimates in $X_\alpha$.  For our purposes, however, one-sided estimates will suffice.    

We conclude this section by recalling a theorem of Gasparis from infinite Ramsey theory.  

\begin{theorem}\cite{G}
If $\mathcal{F}, \mathcal{G}\subset[\mathbb{N}]^{<\omega}$ are hereditary and $N\in [\mathbb{N}]$, then there exists $M\in [N]$ so that either $$\mathcal{F}\cap [M]^{<\omega}\subset \mathcal{G}\text{\ or\ }\mathcal{G}\cap [M]^{<\omega}\subset \mathcal{F}.$$

\end{theorem}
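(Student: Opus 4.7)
My plan is to derive this as a Ramsey-type consequence of the classical open Ramsey theorem of Galvin--Prikry. Writing elements of $[N]$ as increasing sequences $A=(a_1<a_2<\ldots)$ and setting $A|_k=\{a_1,\ldots,a_k\}$, I introduce $\mathcal{H}_1=\mathcal{F}\setminus \mathcal{G}$, $\mathcal{H}_2=\mathcal{G}\setminus \mathcal{F}$, and the two sets
\[
\mathcal{U}_i=\{A\in [N]:\exists k\in \mathbb{N},\ A|_k\in \mathcal{H}_i\},\qquad i\in\{1,2\},
\]
each of which is open in the product topology on $[\mathbb{N}]$, since whether $A|_k\in \mathcal{H}_i$ depends only on the first $k$ entries of $A$.

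First I would apply the open Ramsey theorem to $\mathcal{U}_1$ relative to $N$ to obtain $M_1\in [N]$ with either $[M_1]\subset \mathcal{U}_1$ or $[M_1]\cap \mathcal{U}_1=\varnothing$. In the latter case we are done: for any $F\in \mathcal{F}\cap [M_1]^{<\omega}$, if $F\notin \mathcal{G}$ we could extend $F$ to some $A\in [M_1]$ having $F$ as an initial segment, placing $A\in \mathcal{U}_1$, a contradiction; hence $\mathcal{F}\cap [M_1]^{<\omega}\subset \mathcal{G}$. Otherwise I apply the open Ramsey theorem again to $\mathcal{U}_2$ relative to $M_1$ to obtain $M\in [M_1]$ that is either entirely inside $\mathcal{U}_2$ or entirely disjoint from it. If disjoint, the symmetric argument yields $\mathcal{G}\cap [M]^{<\omega}\subset \mathcal{F}$ and we are done.

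The remaining case to rule out is $[M]\subset \mathcal{U}_1\cap \mathcal{U}_2$, which is where the hereditary hypothesis does the real work. Fix any $A\in [M]$ and let $k_i$ be the least integer with $A|_{k_i}\in \mathcal{H}_i$. Heredity of $\mathcal{F}$ gives $A|_j\in \mathcal{F}$ for every $j\leq k_1$, while heredity of $\mathcal{G}$ applied contrapositively to $A|_{k_1}\notin \mathcal{G}$ gives $A|_j\notin \mathcal{G}$ for every $j\geq k_1$ (any such $A|_j$ contains $A|_{k_1}$, so if $A|_j\in \mathcal{G}$ then $A|_{k_1}\in \mathcal{G}$). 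Since $A|_{k_2}\in \mathcal{G}$, this forces $k_2<k_1$; a symmetric argument using heredity of $\mathcal{F}$ forces $k_1<k_2$, the desired contradiction.

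The main conceptual step is this two-sided heredity argument, which is what prevents $\mathcal{U}_1$ and $\mathcal{U}_2$ from simultaneously covering $[M]$; the Ramsey-theoretic ingredient is classical and only the open case is needed, so no appeal to analytic sets is required.
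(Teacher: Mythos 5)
The paper itself gives no proof of this statement---it is cited directly from Gasparis \cite{G}---so there is no in-paper argument to compare against. Your proposal is correct and self-contained modulo the classical open partition theorem of Nash--Williams/Galvin--Prikry: the sets $\mathcal{U}_1,\mathcal{U}_2$ are indeed open (each is a union of basic clopen sets $\{A : F \text{ is an initial segment of } A\}$ over $F\in\mathcal{H}_i$), the two applications of the partition theorem exhaust the cases correctly, and the crucial exclusion of $[M]\subset\mathcal{U}_1\cap\mathcal{U}_2$ via heredity is right: from $A|_{k_1}\notin\mathcal{G}$ and heredity of $\mathcal{G}$ one gets $A|_j\notin\mathcal{G}$ for all $j\ge k_1$, forcing $k_2<k_1$, and the symmetric argument forces $k_1<k_2$, a contradiction (in fact minimality of the $k_i$ is not even needed). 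Your route differs from Gasparis's original argument, which is a direct combinatorial induction on ``large'' hereditary families in the Nash--Williams style rather than an appeal to Galvin--Prikry; your version is shorter if one takes the open Ramsey theorem as given, while Gasparis's is more elementary and self-contained. Both are valid, and your reduction cleanly isolates where heredity is actually used, which is a nice structural point.
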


\section{Ordinal Indices}

Let $\sigma$ be an arbitrary set.  We let $\sigma^{<\omega}$ denote all finite sequences in $\sigma$, including the sequence of length zero, denoted $(\varnothing)$.  A \emph{tree} on $\sigma$ is a nonemptyset subset $\mathcal{F}\subset \sigma^{<\omega}$ closed under taking initial segments.  We call a tree \emph{hereditary} if every subsequence of a member of $\mathcal{F}$ is a member of $\mathcal{F}$.  

If ${\bf x}=(x_1,\ldots,x_m)$ and ${\bf y}=(y_1, \ldots, y_n)$, we denote the concatenation of ${\bf x}$ with ${\bf y}$ by $({\bf x},{\bf y})$.  If $\mathcal{F}\subset \sigma^{<\omega}$ and ${\bf x}\in \sigma^{<\omega}$, then $$\mathcal{F}({\bf x})=\{{\bf y}\in \sigma^{<\omega}: ({\bf x},{\bf y})\in \mathcal{F}\}.$$

If $\mathcal{F}$ is a tree on $\sigma$ and $\mathcal{F}({\bf x})\neq \varnothing$, then $\mathcal{F}({\bf x})$ is also a tree on $\sigma$.  If $\mathcal{F}$ is hereditary, so is $\mathcal{F}({\bf x})$ and $\mathcal{F}({\bf x})\subset \mathcal{F}$.     

If $\sigma^\omega$ is the set of all (infinite) sequences in $\sigma$, $S\subset \sigma^\omega$, and $\mathcal{F}$ is a tree on $\sigma$, we define the \emph{S-derivative} $\mathcal{F}'_S$ of $\mathcal{F}$ by $$\mathcal{F}'_S=\{{\bf x}\in \sigma^{<\omega}:\exists (y_i)\in S \text{\ so that\ }({\bf x},y_n)\in \mathcal{F}\text{\ } \forall\text{\ } n\}.$$

We next define higher order derivatives of the tree $\mathcal{F}$.  $$ \mathcal{F}^{(0)}_S=\mathcal{F}$$ $$\mathcal{F}^{(\alpha+1)}_S  =\bigl(\mathcal{F}^{(\alpha)}_S\bigr)'_S\text{\ \ \ for all\ }\alpha<\omega_1$$ $$ \mathcal{F}^{(\alpha)}_S  =\underset{\beta<\alpha}{\bigcap}\mathcal{F}^{(\beta)}_S\text{\ \ \ for a limit ordinal\ }\alpha<\omega_1$$

It is clear that these collections are decreasing with respect to containment as the ordinal increases, and that $\mathcal{F}^{(\alpha)}_S\neq \varnothing$ is a tree whenever it is nonempty.  

We define the \emph{S-index} of $\mathcal{F}$ by $I_S(\mathcal{F})=\min \{\alpha: \mathcal{F}^{(\alpha)}_S= \varnothing\}$ if such an $\alpha<\omega_1$ exists, and $I_S(\mathcal{F})=\omega_1$ otherwise.  

We outline the indices which will be of particular interest to us.  If $\mathcal{F}\subset [\mathbb{N}]^{<\omega}$ is a hereditary family, we can consider it as a hereditary tree on $\mathbb{N}$.  If $S$ is the set of strictly increasing subsequences of $\mathbb{N}$ and $\mathcal{F}$ is compact and hereditary, then $I_S(\mathcal{F})=I_{CB}(\mathcal{F})$, the \emph{Cantor-Bendixson index} of $\mathcal{F}$ as a topological space.  We note that the Cantor-Bendixson index is a topological invariant.  Moreover, if $\mathcal{F}, \mathcal{G}\subset [\mathbb{N}]^{<\omega}$ are compact, hereditary, and $\mathcal{F}\subset \mathcal{G}$, $I_{CB}(\mathcal{F})\leq I_{CB}(\mathcal{G})$.  

If $\sigma$ is any set and $S=\sigma^{\omega}$, then the index $I_S(\mathcal{F})$ is called the \emph{order} of the tree $\mathcal{F}$, denoted $o(\mathcal{F})$.  We note that, since $S$ is as large as possible, the order is the largest possible ordinal index.  That is, if $S'\subset \sigma^\omega$ and $\mathcal{F}$ is a tree on $\sigma$, $I_{S'}(\mathcal{F})\leq o(\mathcal{F})$.  

Next, we consider the case of a Banach space $X$ and $S$ the collection of all weakly null sequences in the unit sphere $S_X$.  In this case, for a tree $\mathcal{F}$ on $S_X$, we denote this index, called \emph{the weak index}, by $I_S(\mathcal{F})=I_w(\mathcal{F})$.  

Our last example is the \emph{block index}.  If $Z$ is a Banach space with an FDD $E=(E_n)$, a \emph{block tree of} $(E_n)$ \emph{in} $Z$ is a tree $\mathcal{F}$ on $S_Z$ so that each element is a (finite) block sequence of $(E_n)$.  We let $S$ be the set of infinite normalized block sequences of $(E_n)$ in $Z$.  In this case, the $S$-index of a block tree $\mathcal{F}$, denoted $I_{\text{bl}}(\mathcal{F})$, is the block index of $\mathcal{F}$.  We note that $(E_n)$ is shrinking if and only if every normalized block sequence is weakly null.  This means that for any block tree $\mathcal{F}$ in $S_Z$, $I_{\text{bl}}(\mathcal{F})\leq I_w(\mathcal{F})$.  In all cases with which we are concerned, the block index will be with respect to a specified FDD or some blocking thereof.  Since the block index of a tree with respect to one FDD is the same as that of the same tree with respect to any blocking of that FDD, there will be no ambiguity.  

A set $S\subset \sigma^{\omega}$ \emph{contains diagonals} if every subsequence of a sequence in $S$ also lies in $S$ and for every sequence $({\bf x}_n)\subset S$, there exist $i_1<i_2<\ldots$ in $\mathbb{N}$ so that $(x_{n,i_n})\in S$, where ${\bf x}_n=(x_{n,i})_i$.  The sets $S$ above used to give the Cantor-Bendixson index, the order, and the block index of a tree all clearly contain diagonals.  If $X^*$ is a separable Banach space, then the weak topology on $B_X$ is metrizable and the set of weakly null sequences in $S_X$ contains diagonals.

Given a tree $\mathcal{F}\subset [\mathbb{N}]^{<\omega}$ on $\mathbb{N}$, a family $(x_F)_{F\in \mathcal{F}\setminus\{\varnothing\}}$ in $\sigma$ will be considered as the tree $$\Bigl\{\bigl(x_{\{m_1\}}, x_{\{m_1, m_2\}},\ldots, x_{\{m_1, \ldots, m_k\}}\bigr):k\geq 0, \{m_1, \ldots, m_k\}\in \mathcal{F}\Bigr\}$$

on $\sigma$.  

With this convention, we can state a special case of a proposition which has been very useful in computing certain ordinal indices. 

\begin{proposition}\cite[Proposition 5]{OSZ2}
Let $\sigma$ be an arbitrary set and let $S\subset \sigma^{<\omega}$.  If $S$ contains diagonals, then for a tree $\mathcal{F}$ on $\sigma$ and for a countable ordinal $\alpha$, the following are equivalent.  \begin{enumerate} \item $\omega^\alpha<I_S(\mathcal{F})$ \item There is a family $(x_F)_{F\in S_\alpha\setminus\{\varnothing\}}\subset \mathcal{F}$ so that for all $F\in S_\alpha\setminus\text{MAX}(S_\alpha)$, the sequence $(x_{F\cup\{n\}})_{n>\max F}$ is in $S$. \end{enumerate} \end{proposition}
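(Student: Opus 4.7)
The plan is to prove the equivalence by transfinite induction on $\alpha$, unfolding in tandem the recursive definition of the Schreier family $S_\alpha$ and the iterated $S$-derivative $\mathcal{F}^{(\omega^\alpha)}_S$. In the base case $\alpha=0$ we have $S_0\setminus\{\varnothing\}=\{\{n\}:n\in\mathbb{N}\}$, $\omega^0=1$, and the only element of $S_0\setminus\mathrm{MAX}(S_0)$ is $\varnothing$. Condition (2) then reduces to the existence of a single sequence $(x_{\{n\}})_{n\in\mathbb{N}}\in S$ with each length-one sequence $(x_{\{n\}})\in\mathcal{F}$, which is exactly $\varnothing\in\mathcal{F}'_S$, i.e., $I_S(\mathcal{F})>1=\omega^0$.

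For the successor step $\alpha\to\alpha+1$ the key identities are $S_{\alpha+1}=\{E_1\cup\cdots\cup E_m:m\leq E_1<\cdots<E_m,\ E_i\in S_\alpha\}$ and $\omega^{\alpha+1}=\sup_k\omega^\alpha\cdot k$. For $(1)\Rightarrow(2)$, the hypothesis $\omega^{\alpha+1}<I_S(\mathcal{F})$ gives $\mathcal{F}^{(\omega^\alpha\cdot k)}_S\neq\varnothing$ for every $k$; applying the inductive hypothesis at each such stub produces $S_\alpha$-indexed subfamilies sitting above countably many length-one heads, and these must be diagonally threaded into a single $S_{\alpha+1}$-family whose top-level branch $(x_{\{n\}})_n$ itself lies in $S$. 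Conversely for $(2)\Rightarrow(1)$, for each fixed $n$ the restricted family $\{x_{\{n\}\cup F}:n\leq F\in S_\alpha\}$ witnesses by the inductive hypothesis that the one-term extension $(x_{\{n\}})$ lies in $\mathcal{F}^{(\omega^\alpha)}_S$; since $(x_{\{n\}})_n\in S$, the empty sequence passes into the $(\omega^\alpha\cdot 2)$-nd derivative, and iterating along the $m$-fold decompositions $E_1<\cdots<E_m$ of elements of $S_{\alpha+1}$ yields $\varnothing\in\mathcal{F}^{(\omega^\alpha\cdot\omega)}_S$, as required.

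For the limit step with $\alpha_n\uparrow\alpha$, we use $S_\alpha=\{F:\exists\,n\leq F\in S_{\alpha_n}\}$ and $\omega^\alpha=\sup_n\omega^{\alpha_n}$. The forward direction applies the inductive hypothesis at each $\alpha_n$ and diagonally assembles the resulting $S_{\alpha_n}$-families into an $S_\alpha$-family; the converse restricts a given $S_\alpha$-family to its hereditary portion $\{F\in S_\alpha:n\leq F\}$ and invokes the inductive hypothesis at $\alpha_n$ to obtain $\omega^{\alpha_n}<I_S(\mathcal{F})$ for every $n$. The main obstacle throughout is the gluing in the $(1)\Rightarrow(2)$ successor and limit steps, where countably many compatible subfamilies sitting on distinct stubs must be spliced into a single family whose top-level sequence is itself in $S$. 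This is precisely where the hypothesis that $S$ contains diagonals is indispensable: it is exactly the closure property needed to extract from a sequence of sequences in $S$ one diagonal sequence in $S$ that can serve as the top-level branch of the constructed $S_{\alpha+1}$- or $S_\alpha$-family.
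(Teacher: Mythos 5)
The paper does not actually prove this proposition; it is quoted verbatim from \cite[Proposition 5]{OSZ2}, so there is no in-paper argument to compare yours against. That said, your outline has a genuine gap at the successor step. You treat the subtree of $S_{\alpha+1}$ sitting above a length-one node $\{n\}$ as though it were (homeomorphic to) $S_\alpha$, but it is not: for $\{n\}\cup G\in S_{\alpha+1}$ with $n<\min G$, the definition $S_{\alpha+1}=\{E_1\cup\cdots\cup E_m : m\le E_1<\cdots<E_m,\ E_i\in S_\alpha\}$ forces only $m\le n$, so $G$ ranges over unions of up to $n$ consecutive $S_\alpha$-sets. That collection has Cantor--Bendixson index $\omega^\alpha\cdot n+1$, not $\omega^\alpha+1$, and the inductive hypothesis for $\alpha$ (which speaks only of $\omega^\alpha$) does not apply to it directly. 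Concretely, in your $(2)\Rightarrow(1)$ argument the statement ``since $(x_{\{n\}})_n\in S$, the empty sequence passes into the $(\omega^\alpha\cdot 2)$-nd derivative'' is false: from $(x_{\{n\}})\in\mathcal{F}^{(\omega^\alpha)}_S$ for all $n$ and $(x_{\{n\}})_n\in S$ you only get $\varnothing\in\mathcal{F}^{(\omega^\alpha+1)}_S$, which is far short of $\omega^\alpha\cdot 2$ once $\alpha\ge 1$. The phrase ``iterating along the $m$-fold decompositions'' is the whole content of the step and cannot be left as a gesture; one needs either a sub-induction on $m$ (proving a version of the statement for the $m$-fold amalgam $\{E_1\cup\cdots\cup E_k:k\le m,\ E_1<\cdots<E_k,\ E_i\in S_\alpha\}$ with index $\omega^\alpha\cdot m$), or an inductive hypothesis stated from the start for all ordinals $\gamma$ with regular families of index $\gamma+1$, not just for $\omega^\alpha$ with $S_\alpha$.

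Two smaller remarks. First, the displayed hypothesis $S\subset\sigma^{<\omega}$ is a typo in the paper (it should be $S\subset\sigma^{\omega}$, since ``contains diagonals'' is defined for sets of infinite sequences); your proposal implicitly uses the correct reading. Second, in the limit step your identification of the hereditary portion $\{F\in S_\alpha:n\le F\}$ with an $S_{\alpha_n}$-family needs a spreading/reindexing argument and uses that $S$ is closed under subsequences (part of ``contains diagonals''); this is correct in spirit but should be spelled out, since $\{F\in S_\alpha:n\le F\}$ is not literally $S_{\alpha_n}$. The base case and the general plan (transfinite induction, with the diagonal hypothesis used to glue countably many subfamilies at limit-type stages) are sound.
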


We need a few more pieces of notation to relay some useful propositions.  If $X$ is a separable Banach space, $\mathcal{F}\subset S_X^{<\omega}$, and $\overline{\varepsilon}=(\varepsilon_n)\subset (0,1)$, we write $$\mathcal{F}^X_{\overline{\varepsilon}}=\Bigl\{(x_n)_{n=1}^N\in S_X^{<\omega}: N\in \mathbb{N}, \exists (y_n)_{n=1}^N\in \mathcal{F}, \|x_n-y_n\|\leq \varepsilon_n,\forall n=1, \ldots, N\Bigr\}.$$

Let $Z$ be a Banach space with FDD $E=(E_n)$, and let $\mathcal{F}$ be a block tree of $(E_n)$ in $Z$.  We write $\Sigma(E,Z)$ for the set of all finite, normalized block sequences of $(E_n)$ in $Z$.  For $\overline{\varepsilon}=(\varepsilon_n)\subset (0,1)$, we let $$\mathcal{F}^{E,Z}_{\overline{\varepsilon}}=\mathcal{F}^Z_{\overline{\varepsilon}}\cap \Sigma(E,Z).$$

Last, the \emph{compression} $\mathcal{\tilde{F}}$ of $\mathcal{F}$ is $$\mathcal{\tilde{F}}=\Bigl\{F\in [\mathbb{N}]^{<\omega}:\exists (z_n)_{n=1}^{|F|}\in \mathcal{F}, F=\{\min \text{supp}_E z_n:n=1, \ldots, |F|\}\Bigr\}.$$

\begin{proposition}\cite[Proposition 6]{OSZ2} Let $X\subset Y$ be Banach spaces with separable duals, and let $\mathcal{F}\subset S_X^{<\omega}$ be a tree on $S_X$.  Then for all $\overline{\varepsilon}=(\varepsilon_n)\subset (0,1)$ we have $I_w(\mathcal{F}_{\overline{\varepsilon}}^Y)\leq I_w(\mathcal{F}^X_{5\overline{\varepsilon}})$. \end{proposition}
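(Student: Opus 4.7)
My plan is to proceed by transfinite induction on $\alpha$, proving the enriched statement that for every $\mathbf{x}=(x_1,\ldots,x_n)\in(\mathcal{F}^Y_{\overline{\varepsilon}})^{(\alpha)}_w$ there is $\mathbf{y}=(y_1,\ldots,y_n)\in(\mathcal{F}^X_{5\overline{\varepsilon}})^{(\alpha)}_w$ satisfying a quantitative approximation such as $\|y_i-x_i\|\le 4\varepsilon_i$ for every $i$.  Evaluating at $\alpha=I_w(\mathcal{F}^X_{5\overline{\varepsilon}})$ forces $(\mathcal{F}^Y_{\overline{\varepsilon}})^{(\alpha)}_w=\varnothing$, which yields the claimed inequality.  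The base case $\alpha=0$ is immediate: if $\mathbf{x}\in\mathcal{F}^Y_{\overline{\varepsilon}}$ then its witnessing $\mathbf{z}\in\mathcal{F}\subset\mathcal{F}^X_{5\overline{\varepsilon}}$ is $\varepsilon_i$-close to $\mathbf{x}$ coordinatewise, so one takes $\mathbf{y}=\mathbf{z}$.

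For the successor step, suppose $\mathbf{x}\in(\mathcal{F}^Y_{\overline{\varepsilon}})^{(\alpha+1)}_w$ with weakly null witness $(u^k)\subset S_Y$ satisfying $(\mathbf{x},u^k)\in(\mathcal{F}^Y_{\overline{\varepsilon}})^{(\alpha)}_w$ for every $k$.  I apply the inductive hypothesis to each length-$(n+1)$ extension and obtain $(\mathbf{v}^k,w^k)\in(\mathcal{F}^X_{5\overline{\varepsilon}})^{(\alpha)}_w$ with $\|v^k_i-x_i\|\le 4\varepsilon_i$ and $\|w^k-u^k\|\le 4\varepsilon_{n+1}$.  Separability of $X^*$ and $Y^*$ means the weak topology is metrizable on bounded sets and rules out $\ell_1$-copies in $X$, so a diagonal extraction produces a subsequence along which every $v^k_i$ converges weakly in $X$ to some $y^*_i\in X$ and $w^k$ converges weakly in $X$ to some $\zeta\in X$.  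Because $u^k\to 0$ weakly in $Y$ and $X\hookrightarrow Y$, the vector $\zeta$ coincides with the weak $Y$-limit of $w^k-u^k$, so $\|\zeta\|\le 4\varepsilon_{n+1}$ by weak lower semi-continuity.  I now normalize $y_i:=y^*_i/\|y^*_i\|\in S_X$ (valid since $\|y^*_i\|$ is within $4\varepsilon_i$ of $1$) and $\hat w^k:=(w^k-\zeta)/\|w^k-\zeta\|\in S_X$; the latter is weakly null in $X$, hence in $Y$ by Hahn--Banach extension of functionals.  Invoking the $\mathcal{F}$-witness of $(\mathbf{v}^k,w^k)\in\mathcal{F}^X_{5\overline{\varepsilon}}$ and chaining the triangle inequality verifies $(\mathbf{y},\hat w^k)\in\mathcal{F}^X_{5\overline{\varepsilon}}$.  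Since this holds for all $k$ and $(\hat w^k)$ is weakly null in $S_X$, one concludes $\mathbf{y}\in(\mathcal{F}^X_{5\overline{\varepsilon}})^{(\alpha+1)}_w$.  The limit case $\alpha=\sup_n\alpha_n$ is handled by a further diagonal extraction of the approximants $\mathbf{y}^{\alpha_n}$ supplied by the inductive hypothesis, again exploiting that they all lie in the bounded set $\prod_i\{y\in X:\|y-x_i\|\le 4\varepsilon_i\}$.

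The main obstacle is the successor step's constant bookkeeping: the varying initial segments $\mathbf{v}^k$ must stabilize to a single $\mathbf{y}\in S_X^n$ that still admits a weakly null $S_X$-extension belonging to the $\alpha$-th derivative of $\mathcal{F}^X_{5\overline{\varepsilon}}$.  Achieving this requires two normalization steps (one to force $\mathbf{y}$ back onto the unit sphere, one to force $\hat w^k$ there), and the constant $5$ in $\mathcal{F}^X_{5\overline{\varepsilon}}$ is precisely calibrated to absorb the original $\varepsilon$-perturbation together with these amplifications.  The extraction itself is where the hypothesis that both duals are separable is essential.
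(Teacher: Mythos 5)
The paper does not prove this proposition; it cites it from \cite[Proposition 6]{OSZ2}. So the question is simply whether your argument is correct, and I believe it has two genuine gaps.

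The central gap is in the successor step. You want to conclude $\mathbf{y}\in(\mathcal{F}^X_{5\overline\varepsilon})^{(\alpha+1)}_w$, which by definition requires that $(\mathbf{y},\hat w^k)\in(\mathcal{F}^X_{5\overline\varepsilon})^{(\alpha)}_w$ for every $k$ (along a weakly null sequence). But the chaining argument you describe only verifies the much weaker statement $(\mathbf{y},\hat w^k)\in\mathcal{F}^X_{5\overline\varepsilon}$, i.e.\ membership at level $0$. What you know at level $\alpha$ is that $(\mathbf{v}^k,w^k)\in(\mathcal{F}^X_{5\overline\varepsilon})^{(\alpha)}_w$, and there is no step transferring membership in the $\alpha$-th derivative from $(\mathbf{v}^k,w^k)$ to the modified tuple $(\mathbf{y},\hat w^k)$. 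Derived sets of a tree are not closed under approximation or under passing to weak limits; for $\alpha>0$ they encode a recursive branching structure, so a nearby or weakly-limiting tuple may fail to lie in the derivative even if it lies in the original tree. Without some lemma asserting stability of $(\cdot)^{(\alpha)}_w$ under the specific perturbations you perform, the induction does not close, and as written your argument only establishes $\mathbf y\in(\mathcal{F}^X_{5\overline\varepsilon})'_w$.

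A second gap is the extraction step. Separable dual gives metrizability of the weak topology on bounded sets and (via Rosenthal) that every bounded sequence in $X$ has a \emph{weakly Cauchy} subsequence, not a weakly \emph{convergent} one — these coincide only under weak compactness, i.e.\ reflexivity, which is not assumed here. So the putative limits $y^*_i$ and $\zeta$ a priori live in $X^{**}$, not $X$, and the subsequent normalizations $y_i=y^*_i/\|y^*_i\|$ and $\hat w^k=(w^k-\zeta)/\|w^k-\zeta\|$ need not produce elements of $S_X$. (There is also a minor quantitative slip: even if $y^*_i\in X$ with $\|y^*_i-x_i\|\le 4\varepsilon_i$, the renormalization pushes the error up to roughly $8\varepsilon_i$, breaking the invariant $\|y_i-x_i\|\le 4\varepsilon_i$ that the induction relies on; and when $\varepsilon_i$ is close to $1$ the denominator can vanish.) These issues are less fundamental than the first, but they show that the bookkeeping and the functional-analytic machinery in the successor step both need to be rethought.
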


\begin{proposition}\cite[Proposition 8]{OSZ2}
Let $Z$ be a Banach space with FDD $E=(E_n)$.  Let $\mathcal{F}$ be a hereditary block tree of $(E_n)$ in $Z$.  Then for all $\overline{\varepsilon}=(\varepsilon_n)\subset (0,1)$ and for all limit ordinals $\alpha$, if $I_{\text{bl}}(\mathcal{F}_{\overline{\varepsilon}}^{E,Z})<\alpha$, then $I_{CB}(\mathcal{\tilde{F}})<\alpha$.  \end{proposition}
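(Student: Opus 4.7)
The plan is to argue the contrapositive: assuming $I_{CB}(\tilde{\mathcal{F}})\geq\alpha$ for a limit ordinal $\alpha$, derive $I_{\mathrm{bl}}(\mathcal{F}^{E,Z}_{\overline{\varepsilon}})\geq\alpha$. Since $\alpha$ is a limit, it suffices to show $I_{\mathrm{bl}}(\mathcal{F}^{E,Z}_{\overline{\varepsilon}})>\omega^{\gamma}$ for every $\gamma$ with $\omega^{\gamma}<\alpha$; the supremum of such ordinals equals $\alpha$. Fix such a $\gamma$. I will use the Schreier-indexed characterization of Proposition 4.1 twice: once with $S$ the set of strictly increasing sequences in $\mathbb{N}$ (giving the Cantor--Bendixson index) to unpack the hypothesis, and once with $S$ the set of normalized block sequences of $(E_n)$ to repackage the conclusion.

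From $\omega^{\gamma}<I_{CB}(\tilde{\mathcal{F}})$, Proposition 4.1 produces a family $(k_G)_{G\in S_{\gamma}\setminus\{\varnothing\}}\subset\mathbb{N}$ such that for every $G=\{m_1<\cdots<m_\ell\}\in S_\gamma$, writing $G_i=\{m_1,\ldots,m_i\}$, the set $\{k_{G_1},\ldots,k_{G_\ell}\}$ belongs to $\tilde{\mathcal{F}}$, and $(k_{G\cup\{n\}})_{n>\max G}$ is strictly increasing for $G$ non-maximal. Unpacking the definition of the compression, each such chain is realised by a block witness $(y^G_1,\ldots,y^G_{|G|})\in\mathcal{F}$ with $\min\mathrm{supp}_E y^G_i=k_{G_i}$. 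The goal is to choose \emph{consistently}, within the $\overline{\varepsilon}$-budget, a single family $(z_G)_{G\in S_\gamma\setminus\{\varnothing\}}\subset S_Z$ of normalized block vectors of $(E_n)$ with $\min\mathrm{supp}_E z_G=k_G$, such that for every $G\in S_\gamma$ the chain $(z_{G_1},\ldots,z_G)$ is a coordinatewise $\overline{\varepsilon}$-perturbation of some member of $\mathcal{F}$. Applying Proposition 4.1 in the block direction to this family then yields $I_{\mathrm{bl}}(\mathcal{F}^{E,Z}_{\overline{\varepsilon}})>\omega^{\gamma}$.

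The construction of $(z_G)$ proceeds by a combined diagonal and Ramsey argument. The natural first attempt is to stabilise the witnesses $y^G_i$ for fixed $G_i$ by compactness: $\min\mathrm{supp}_E y^G_i=k_{G_i}$ is forced, and once $G_{i+1}$ is fixed the vector $y^G_i$ lies in the fixed finite-dimensional space $\bigoplus_{j=k_{G_i}}^{k_{G_{i+1}}-1}E_j$. The main obstacle is that as $G$ ranges over extensions of $G_i$ the next node $G_{i+1}$ itself varies, so the witnesses are spread across an unbounded range of supports. I would handle this by first thinning the index family via Gasparis's theorem (Theorem 3.2) to secure controlled combinatorial behaviour on an infinite subset $M\subset\mathbb{N}$, and then performing a top-down diagonal extraction through the finitely many levels of $S_\gamma\cap[M]^{<\omega}$, at each level using compactness of the now-controlled finite-dimensional ambient space to stabilise $y^G_i$ within error $\varepsilon_i/2$ of a common $z_{G_i}$. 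The hereditariness of $\mathcal{F}$ is used to propagate truncations of surviving witnesses to shorter chains. The limit-ordinal hypothesis on $\alpha$ is essential precisely here: it lets us run an independent such construction for each $\gamma<\alpha$ and absorb the cumulative perturbations within the fixed budget $\overline{\varepsilon}$, without any single perturbation needing to shrink as $\gamma$ grows.
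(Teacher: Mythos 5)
The paper does not actually prove this proposition; it is cited verbatim from \cite[Proposition 8]{OSZ2}, so there is no in-paper argument to compare against. Evaluating your proposal on its own merits, the central reduction is flawed: you claim that to prove $I_{\mathrm{bl}}(\mathcal{F}^{E,Z}_{\overline{\varepsilon}})\ge\alpha$ it suffices to show $I_{\mathrm{bl}}(\mathcal{F}^{E,Z}_{\overline{\varepsilon}})>\omega^{\gamma}$ for each $\gamma$ with $\omega^{\gamma}<\alpha$, and that ``the supremum of such ordinals equals $\alpha$.'' This is false for general limit ordinals. Take $\alpha=\omega\cdot 2$, a countable limit ordinal: the powers of $\omega$ below $\alpha$ are $1$ and $\omega$, whose supremum is $\omega<\alpha$, so your argument only yields $I_{\mathrm{bl}}(\mathcal{F}^{E,Z}_{\overline{\varepsilon}})\ge \omega+1$. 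Proposition 4.1 is calibrated to ordinals of the form $\omega^{\gamma}$, so it cannot by itself probe intermediate levels of the derivative. To handle an arbitrary limit ordinal $\alpha$ one must track the derivatives directly: one shows, roughly, that nonvacuity of the $\beta$-th Cantor--Bendixson derivative of $\tilde{\mathcal{F}}$ forces nonvacuity of the $\beta$-th block derivative of $\mathcal{F}^{E,Z}_{\overline{\varepsilon}}$, perhaps up to a fixed finite offset; the limit-ordinal hypothesis is then what absorbs that offset. Your stated role for the limit hypothesis --- absorbing ``cumulative perturbations'' across different $\gamma$ --- is a misdiagnosis: the constructions for distinct $\gamma$ are independent and each uses the same budget $\overline{\varepsilon}$, so nothing accumulates.

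A secondary, unresolved difficulty lies in the stabilization step. Your compactness observation is correct for the interior vectors $y^G_i$ with $i<|G|$, since once $k_{G_{i+1}}$ is fixed they live in the compact sphere of $\bigoplus_{j=k_{G_i}}^{k_{G_{i+1}}-1}E_j$; but the terminal vector $y^G_{|G|}$ has no upper bound on its support, and as $G$ is extended, a vector that was terminal becomes interior with a fresh constraint. You acknowledge this obstacle, but the proposed remedy (a Gasparis thinning followed by a top-down diagonal pass) is not spelled out in enough detail to see that a single consistent family $(z_G)$ with the prescribed minimal supports emerges within the $\overline{\varepsilon}$-budget. Gasparis's theorem gives a dichotomy between two hereditary families on an infinite subset; it is not clear from the sketch which two families you would feed it, nor how its output fixes the terminal witnesses. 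This part may well be repairable, but as written it is a gesture rather than an argument.
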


Next, we have the Bourgain $\ell_1$ index of a Banach space.  For a Banach space $X$ and $K\geq 1$, we define $$T(X,K)=\Bigl\{(x_n)\in S^{<\omega}_X: (x_n) \text{\ is\ }K\text{\ basic\ }, K\Big\|\displaystyle\sum a_nx_n\Bigr\|\geq \displaystyle\sum |a_n| \text{\ \ }\forall(a_n)\subset \mathbb{R}\Bigr\}.$$

Similarly, if $X$ has basis $(e_n)$, we define $T_b(X,K,(e_n))=T(X,K)\cap \Sigma((e_n),X)$.  These are hereditary trees.  We define $I(X,K)=o(T(X,K))$, the order of the tree $T(X,K)$.  We define $I_b(X,K,(e_n))=o(T_b(X,K,(e_n))$.  Finally, we define $I(X)=\underset{K\geq1}{\sup}I(X,K)$, and $I_b(X,(e_n))=\underset{K\geq 1}{\sup}I_b(X,K,(e_n))$.  Roughly speaking, the index $I$ gives some measure of the complexity of the finite dimensional $\ell_1$ structures contained within $X$.  The $I_b$ index gives some measure of the complexity of the finite dimensional $\ell_1$ structures contained within the block basic sequences of $X$.  It is important to note that, in general, $I_b$ is distinct from the previously defined block index $I_{bl}$.  Moreover, by \cite[Theorem 3.14]{AJO}, $I_b(X,(e_n))=\omega_1$ if and only if $X$ contains an isomorphic copy of $\ell_1$.    

\vspace{5mm}  

Last, we recall the Szlenk index of a separable Banach space.  Let $X$ be a separable Banach space, and $K$ a weak$^*$ compact subset of $X^*$.  For $\varepsilon>0$, we define $$(K)'_\varepsilon=\Bigl\{z\in K: \text{For all\ }w^*\text{ -neighborhoods\ }U \text{\ of\ }z, \text{diam}(U\cap K)>\varepsilon\Bigr\}.$$  It is easily verified that $(K)'_\varepsilon$ is also weak$^*$ compact.  We let $$P_0(K, \varepsilon)=K$$

$$P_{\alpha+1}(K, \varepsilon)=(P_\alpha(K, \varepsilon))'_\varepsilon \text{\ \ \ }\alpha<\omega_1$$

$$P_\alpha(K, \varepsilon)=\underset{\beta<\alpha}{\bigcap}P_\beta(K, \varepsilon) \text{\ \ \ }\alpha<\omega_1,\text{\ \ }\alpha  \text{\ a limit ordinal.}$$

If there exists some $\alpha<\omega_1$ so that $P_\alpha(K, \varepsilon)=\varnothing$, we define $$\eta(K, \varepsilon)=\min \{\alpha: P_\alpha(K)=\varnothing\}.$$  Otherwise, we set $\eta(K, \varepsilon)=\omega_1$.  Then we define the Szlenk index of a Banach space $X$, denoted $Sz(X)$, to be $$Sz(X)=\underset{\varepsilon>0}{\sup}\text{\ }\eta(B_{X^*},\varepsilon).$$

The Szlenk index is one of several slicing indices.  The following two facts come from \cite{SZLENK}. \begin{enumerate}\item  For a Banach space $X$, $Sz(X)<\omega_1$ if and only if $X^*$ is separable,  \item If $X\leq Y$, $Sz(X)\leq Sz(Y)$. \end{enumerate}

The above definition of the index is, in some cases, intractable.  A connection between weak indices and the Szlenk index has been very useful in computations.  For this, we will be concerned with a specific type of tree.  

For a Banach space $X$ and $\rho\in (0,1]$, we let $$\mathcal{H}^X_\rho=\Bigl\{(x_n)\in S_X^{<\omega}: \Bigl\|\displaystyle\sum a_nx_n\Bigr\|\geq \rho \displaystyle\sum a_n \text{\ \ \ }\forall (a_n)\subset \mathbb{R}^+\Bigr\}.$$

Clearly $\mathcal{H}^X_\rho$ is a hereditary tree on $S_X$ for all $\rho\in (0,1]$.  We collect two tools which will facilitate the computation of the Szlenk indices of the Schreier spaces.  

\begin{theorem}\cite[Theorems 3.22, 4.2]{AJO}
If $X$ is a Banach space with $X^*$ separable, there exists some ordinal $\beta<\omega_1$ so that $Sz(X)=\omega^\beta$. 
Moreover, $$Sz(X)=\underset{\rho\in (0,1)}{\sup}I_w(\mathcal{H}^X_\rho).$$
\end{theorem}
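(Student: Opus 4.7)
The statement combines an identity, $Sz(X) = \sup_{\rho \in (0,1)} I_w(\mathcal{H}^X_\rho)$, with a structural fact that $Sz(X)$ is a power of $\omega$. The plan is to establish the identity first and then extract the structural consequence from it.

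For the inequality $Sz(X) \geq I_w(\mathcal{H}^X_\rho)$, fix $\rho$ and a weakly null tree $(x_t)_{t \in \mathcal{F}}$ lying in $\mathcal{H}^X_\rho$ of large $I_w$-index. To each finite branch $t = (t_1,\ldots,t_k)$ I would attach a functional $x^*_t \in B_{X^*}$ which norms $x_{t_1},\ldots,x_{t_k}$ from below by $\rho/2$; such $x^*_t$ exist by Hahn--Banach applied to the $\ell_1^+$ estimate. Along any node $(x_{(t,n)})_n$, select a $w^*$-cluster point $x^*$ of the corresponding $(x^*_{(t,n)})_n$. Since the node is weakly null, $x^*(x_{(t,n)}) \to 0$, while $x^*_{(t,n)}(x_{(t,n)}) \geq \rho/2$, so $\|x^*_{(t,n)} - x^*\| \geq \rho/4$ eventually; this places $x^*$ in the $(\rho/4)$-slicing derivative of the $w^*$-closure of the child functionals. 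Transfinite induction on the order of the tree then yields $\eta(B_{X^*}, \rho/4) \geq I_w(\mathcal{H}^X_\rho)$.

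For the reverse inequality, given $\varepsilon > 0$ and $\alpha < \eta(B_{X^*}, \varepsilon)$, I would build by transfinite recursion a weakly null tree in $\mathcal{H}^X_\rho$ of order at least $\alpha$ for some $\rho$ comparable to $\varepsilon$. At each node carrying a functional $x^* \in P_\beta(B_{X^*},\varepsilon)$, choose $y^*_n \in P_\gamma(B_{X^*},\varepsilon)$ for appropriate $\gamma < \beta$ with $y^*_n \to x^*$ in the $w^*$-topology and $\|y^*_n - x^*\| > \varepsilon$. Since $y^*_n - x^*$ is $w^*$-null, pick $z_n \in S_X$ with $(y^*_n - x^*)(z_n) > \varepsilon/2$; then use separability of $X^*$ together with the absence of $\ell_1$ in $X$ (a consequence of separability of $X^*$) to extract a weakly Cauchy subsequence, and take normalized successive differences to obtain a weakly null sequence with the desired $\ell_1^+$ bound along branches.

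Having established the identity, I would derive $Sz(X) = \omega^\beta$ by showing $Sz(X)$ is additively indecomposable. Given trees $\mathcal{T}_1 \subseteq \mathcal{H}^X_{\rho_1}$ and $\mathcal{T}_2 \subseteq \mathcal{H}^X_{\rho_2}$ with $I_w$-indices exceeding $\alpha$ and $\gamma$, splice a weakly null copy of $\mathcal{T}_2$ below each branch of $\mathcal{T}_1$ to produce a tree of order exceeding $\alpha + \gamma$ in $\mathcal{H}^X_{\min(\rho_1,\rho_2)/2}$. The main technical obstacle is this gluing step: ensuring the concatenated nodes remain weakly null in $X$ and that the $\ell_1^+$ lower bound persists across the junction, which I would handle by a Ramsey-style extraction on subtrees, exploiting metrizability of $(B_{X^*},w^*)$ from separability of $X$. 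Combined with $Sz(X) < \omega_1$ from separability of $X^*$, additive indecomposability of a countable ordinal is equivalent to its being of the form $\omega^\beta$.
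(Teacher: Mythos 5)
The paper does not prove this theorem; it is stated as a citation of \cite[Theorems 3.22, 4.2]{AJO}, so there is no internal proof to compare against. That said, your attempt has a concrete flaw worth flagging.

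Your sketches of the two inequalities for $Sz(X)=\sup_\rho I_w(\mathcal{H}^X_\rho)$ are in the spirit of the known arguments, but the reverse inequality is harder than you indicate. It is not enough to pick $z_n$ normed by $y^*_n - x^*$, extract a weakly Cauchy subsequence, and take normalized differences: the vectors along a \emph{branch} of the tree you are building live at many different levels, and the $\ell_1^+$ condition defining $\mathcal{H}^X_\rho$ requires a single functional $f\in B_{X^*}$ with $f(z)\geq\rho$ for every $z$ in that branch. Choosing the norming functionals level by level, as you do, gives no mechanism by which one functional controls the whole branch; the actual construction has to propagate a functional down the recursion so that it norms all the vectors chosen along its path. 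This is a missing idea, not a perturbation.

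The more serious gap is the claimed derivation of $Sz(X)=\omega^\beta$ by splicing trees. Concatenating an $\ell_1^+$ sequence from $\mathcal{H}^X_{\rho_1}$ with one from $\mathcal{H}^X_{\rho_2}$ does \emph{not} produce an $\ell_1^+$ sequence with constant $\min(\rho_1,\rho_2)/2$, because the two halves are normed by unrelated functionals. Concretely, the single-vector sequences $(e_1)$ and $(-e_1)$ each lie in $\mathcal{H}^X_1$, yet their concatenation $(e_1,-e_1)$ lies in no $\mathcal{H}^X_\rho$ with $\rho>0$, since $\|e_1 + (-e_1)\|=0$. This obstruction is intrinsic to a single branch, so no Ramsey-type pruning of subtrees repairs it. In \cite{AJO} the fact that $Sz(X)$ is a power of $\omega$ (Theorem 3.22) is established directly from the slicing derivation on $B_{X^*}$, using the convexity and symmetry of the ball to combine derived sets; it precedes, and does not follow from, the tree characterization (Theorem 4.2). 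Your plan inverts that logical order and the inversion fails precisely at the splicing step.
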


\begin{corollary} Let $V$ be a Banach space with normalized, $1$-unconditional, shrinking basis $(v_n)$.  If $Z$ is a Banach space with shrinking FDD $E$ which satisfies subsequential $V$-upper block estimates, then $Sz(Z)\leq Sz(V)$.  \end{corollary}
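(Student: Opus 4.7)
The plan is to proceed by contradiction using the characterization $Sz(X)=\sup_{\rho\in(0,1)} I_w(\mathcal{H}_\rho^X)$ from Theorem 4.4. Write $Sz(V)=\omega^\beta$, and note that $(E_n)$ being shrinking makes $Z^*$ separable, so the Szlenk index of $Z$ is a countable ordinal. Thus it suffices to verify $I_w(\mathcal{H}_\rho^Z)\leq \omega^\beta$ for each $\rho\in(0,1)$.

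Fix $\rho$ and assume toward a contradiction that $I_w(\mathcal{H}_\rho^Z)>\omega^\beta$. Because $Z^*$ is separable, the weakly null sequences in $S_Z$ form a set that contains diagonals, so Proposition 4.1 produces a family $(z_F)_{F\in S_\beta\setminus\{\varnothing\}}\subset \mathcal{H}_\rho^Z$ whose nodes are weakly null. Using that $(E_n)$ is shrinking, I would then perturb this family into a block tree: a recursion through the countable set $S_\beta$, combined with restriction to a suitable infinite $M\subset\mathbb{N}$ (where the spreading property of $S_\beta$ lets the restricted family still be indexed by a copy of $S_\beta$) and summable perturbation errors, should yield a family $(z'_F)_{F\in S_\beta\setminus\{\varnothing\}}\subset \mathcal{H}_{\rho/2}^Z$ whose nodes are genuine normalized block sequences of $(E_n)$.

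With a block tree in hand, the $V$-upper block estimates transfer everything into $V$. Setting $w_F:=v_{\min \mathrm{supp}_E z'_F}$ and applying the $C$-$V$-upper block estimates along any branch $F_1\subsetneq\cdots\subsetneq F_k$ gives $(z'_{F_i})_{i=1}^k$ $C$-dominated by $(w_{F_i})_{i=1}^k$, which combined with $(z'_{F_i})\in \mathcal{H}_{\rho/2}^Z$ forces $(w_{F_i})\in \mathcal{H}_{\rho/(2C)}^V$. Each node $(w_{F\cup\{n\}})_n$ is a subsequence of $(v_n)$ whose indices tend to infinity (because the underlying $z'_{F\cup\{n\}}$ form a block sequence), hence weakly null in $V$ since $(v_n)$ is shrinking. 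The reverse implication of Proposition 4.1 inside $V$ then yields $\omega^\beta<I_w(\mathcal{H}_{\rho/(2C)}^V)\leq Sz(V)=\omega^\beta$, the desired contradiction.

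I expect the perturbation step to be the technically delicate point: one must pass from weakly null nodes to block nodes simultaneously over the entire $S_\beta$-indexed tree, keep the loss in $\rho$ under control (so the perturbed tree still lies in some $\mathcal{H}_{\rho'}^Z$ with $\rho'>0$), and preserve the indexing structure. The remaining steps are essentially bookkeeping around Theorem 4.4, Proposition 4.1, and the definition of upper block estimates.
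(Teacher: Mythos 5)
Your proposal follows essentially the same route as the paper's proof of Corollary 4.5: both invoke Theorem 4.4 to reduce to bounding $I_w(\mathcal{H}^Z_\rho)$, use Proposition 4.1 to extract an $S_\beta$-indexed tree in $\mathcal{H}^Z_\rho$ with weakly null nodes, perturb to a block tree (with some loss in $\rho$), transfer to $V$ via the upper block estimates and the map $F\mapsto v_{\min\operatorname{supp}_E z'_F}$, use shrinkingness of $(v_n)$ to get weakly null nodes in $V$, and close with the reverse direction of Proposition 4.1. One small precision worth noting: for the domination argument you need each \emph{branch} (not merely each node) of the perturbed tree to be a block sequence of $(E_n)$ -- this is what the paper's term ``block tree'' already encodes, and what the standard perturbation/pruning argument actually delivers -- but your phrasing in terms of ``block nodes'' elides that distinction, even though you clearly apply the estimate along branches.
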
 

\begin{proof} The proof is a generalization of Proposition $17$ of \cite{OSZ2}.  

Let $\alpha<\omega_1$ be such that $Sz(V)=\omega^\alpha$.  Assume $Sz(Z)>\omega^\alpha$.  By Proposition $4.4$ there exists some $\rho\in (0,1]$ so that $I_w(\mathcal{H}^Z_\rho)>\omega^\alpha$.  Then by Proposition $4.1$ there exists some normalized tree $(x_E)_{E\in S_{\alpha}\setminus \{\varnothing\}}\subset \mathcal{H}^Z_\rho$ so that for each $E\in S_\alpha\setminus \text{MAX}(S_\alpha)$, $(x_{E\cup \{n\}})_{n>\max E}$ is weakly null.  We can, by shrinking $\rho$ and using standard perturbation and pruning arguments, assume that $(x_E)_{E\in S_\alpha\setminus \{\varnothing\}}$ is a block tree with the added requirement that each branch is a block sequence.  Let $C\geq 1$ be such that $E$ satisfies subsequential $C$-$V$-upper block estimates in $Z$.  Then $(v_{m_E})$ is a normalized block tree, where $m_E=\min \text{supp\ }x_E$.  Because the braches of this tree $C$ dominate the branches of the tree $(x_E)_{E\in S_\alpha\setminus\{\varnothing\}}$, $(v_{m_E})_{ E\in S_\alpha\setminus \{ \varnothing \} }\subset \mathcal{H}^V_{\rho C^{-1}}$.

Moreover, since all nodes are block sequences, $m_{E\cup \{n\}}\to \infty$ as $n\to \infty$ if $E\in S_\alpha \setminus\text{MAX}(S_\alpha)$.  Because the basis for $V$ is shrinking, $(v_{m_{E\cup \{n\}}})_{n>\max E}$ is weakly null for such $E$.  But the existence of such a tree, again by Proposition $4.1$, means $Sz(V)>\omega^\alpha$.  But this is a contradiction, and we have the result.  

\end{proof}

\vspace{5mm}

Last, we make an observation regarding the spaces which will be our main tools.  

\begin{proposition} For $\alpha<\omega_1$, $Sz(X_\alpha)=\omega^{\alpha+1}$. \end{proposition}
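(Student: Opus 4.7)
The plan is to obtain both bounds via Theorem 4.4, using Proposition 4.1 for the lower bound and an external embedding plus a known $C(K)$ Szlenk computation for the upper bound.

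First I would establish that $X_\alpha^{*}$ is separable, which lets us invoke Theorem 4.4. The paragraph just above the proposition notes that $X_\alpha$ is $c_0$-saturated, so $X_\alpha$ contains no copy of $\ell_1$; since $(e_n)$ is a $1$-unconditional basis, the James-type characterization forces $(e_n)$ to be shrinking, and hence $X_\alpha^{*}$ is separable. By Theorem 4.4 there is $\beta<\omega_1$ with $Sz(X_\alpha)=\omega^\beta$, and $Sz(X_\alpha)=\sup_{\rho\in(0,1)}I_w(\mathcal{H}^{X_\alpha}_\rho)$.

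For the lower bound $Sz(X_\alpha)\geq \omega^{\alpha+1}$, because $Sz(X_\alpha)$ has the form $\omega^\beta$ it suffices to show $Sz(X_\alpha)>\omega^\alpha$, equivalently $I_w(\mathcal{H}^{X_\alpha}_1)>\omega^\alpha$. By Proposition 4.1 I would exhibit the family $(x_F)_{F\in S_\alpha\setminus\{\varnothing\}}$ defined by $x_F=e_{\max F}$. Any branch of this tree has the form $(e_{n_1},\ldots,e_{n_k})$ where $\{n_1,\ldots,n_k\}\in S_\alpha$; by hereditarity of $S_\alpha$ every initial segment of such a branch also lies in $S_\alpha$, so the Schreier norm gives
$$\Bigl\|\sum_{i=1}^{k}a_i e_{n_i}\Bigr\|_\alpha\geq \bigl\|\{n_1,\ldots,n_k\}\bigl(\textstyle\sum a_i e_{n_i}\bigr)\bigr\|_1=\sum|a_i|,$$
placing every branch in $\mathcal{H}^{X_\alpha}_1$. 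For non-maximal $F\in S_\alpha$ the node $(x_{F\cup\{n\}})_{n>\max F}$ is a tail $(e_n)_{n>\max F}$, which is weakly null because $(e_n)$ is shrinking. Proposition 4.1 now delivers $I_w(\mathcal{H}^{X_\alpha}_1)>\omega^\alpha$.

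For the upper bound $Sz(X_\alpha)\leq \omega^{\alpha+1}$, I would invoke the isomorphic embedding $X_\alpha\hookrightarrow C([1,\omega^{\omega^\alpha}])$ cited from \cite{AJO}, combined with Samuel's theorem that $Sz(C([1,\omega^{\omega^\alpha}]))=\omega^{\alpha+1}$ and the monotonicity of $Sz$ under linear embedding (fact (2) recorded after the definition of $Sz$). These three facts together give $Sz(X_\alpha)\leq \omega^{\alpha+1}$, which with the previous paragraph forces $\beta=\alpha+1$. The main obstacle is the upper bound: unlike the lower bound, which is a direct tree construction inside the Schreier norm, the upper bound depends on an external ingredient (Samuel's computation of $Sz$ for $C([1,\omega^{\omega^\alpha}])$). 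A more self-contained alternative would be a transfinite induction on $\alpha$ estimating $I_w(\mathcal{H}^{X_\alpha}_\rho)$ directly from the inductive definition of $S_\alpha$, reducing general weakly null trees to block trees of $(e_n)$ by perturbation and pruning, but this is substantially longer than the $C(K)$ citation route.
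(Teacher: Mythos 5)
Your lower bound argument is essentially the paper's: you use the same tree $x_F = e_{\max F}$ for $F\in S_\alpha\setminus\{\varnothing\}$, the same branch computation placing it in $\mathcal{H}^{X_\alpha}_1$, the same invocation of Proposition 4.1, and the same preliminary observation that $(e_n)$ is shrinking (you derive ``no $\ell_1$'' from $c_0$-saturation, the paper from $I(X_\alpha)<\omega_1$, but either is routine). The upper bound, however, is where you genuinely diverge. The paper works internally: it starts from $I_b(X_\alpha,(e_n))=\omega^{\alpha+1}$ (cited from \cite[Theorem 5.5]{AJO}), supposes $I_w(\mathcal{H}^{X_\alpha}_\rho)>\omega^{\alpha+1}$ for some $\rho$, extracts by perturbation a block tree indexed by $S_{\alpha+1}\setminus\{\varnothing\}$ inside $\mathcal{H}^{X_\alpha}_\rho$, and then uses the $1$-unconditionality of $(e_n)$ to upgrade the one-sided $\mathcal{H}_\rho$-estimate to a two-sided $\ell_1$-estimate, landing the block tree in $T_b(X_\alpha,\rho^{-1},(e_n))$ and thereby contradicting the block $\ell_1$-index. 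Your route instead embeds $X_\alpha$ into $C([1,\omega^{\omega^\alpha}])$, applies Samuel's theorem that $Sz(C([1,\omega^{\omega^\alpha}]))=\omega^{\alpha+1}$, and uses monotonicity of $Sz$ under subspaces. Both are correct, and both outsource the hard ordinal computation to a black box from the literature: the paper to the Alspach--Judd--Odell $\ell_1$-block-index of $X_\alpha$, and you to Samuel's $C(K)$ Szlenk computation. The paper's choice has the slight advantage of staying within the ordinal-index machinery it has already set up in Section 4 (and of reusing the same \cite{AJO} citation it needs anyway for the shrinking argument), whereas yours brings in a separate $C(K)$ result; on the other hand your route is shorter to state and avoids the perturbation/pruning step needed to pass from a weakly null tree to a block tree.

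One small point worth making explicit in your write-up: when you pass from the branch estimate to membership in $\mathcal{H}^{X_\alpha}_1$, the defining condition of $\mathcal{H}^X_\rho$ is only over nonnegative coefficients $(a_n)\subset\mathbb{R}^+$, so your inequality $\|\sum a_i e_{n_i}\|_\alpha\geq\sum|a_i|$ is stronger than needed; it does establish the claim, but you could simply observe that for $a_i\geq 0$ the set $\{n_1,\ldots,n_k\}\in S_\alpha$ witnesses $\|\sum a_i e_{n_i}\|_\alpha\geq\sum a_i$. This is cosmetic. The argument is sound.
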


\begin{proof} By \cite[Theorem 5.5]{AJO}, we know $I_b(X_\alpha, (e_n))=\omega^{\alpha+1}$.  Then $X_\alpha$ contains no copy of $\ell_1$, or else $I(X_\alpha)=\omega_1$.  Since the canonical basis is unconditional, this means it must be shrinking.  Therefore the tree $(e_{\max E})_{E\in S_\alpha}$ is such that all nodes are weakly null.  Moreover, each branch is of the form $(e_n)_{n\in E}$ for some $E\in S_\alpha$.  This branch is isometrically $\ell_1^{|E|}$, and so this tree is contained in $\mathcal{H}^{X_\alpha}_1$.  By Proposition $4.1$, $I_w(\mathcal{H}_1^{X_\alpha})>\omega^\alpha$.  

Thus we need only show that $Sz(X_\alpha)\leq \omega^{\alpha+1}$.  If not, there must exist some $\rho\in (0,1]$ so that $I_w(\mathcal{H}_\rho^{X_\alpha})>\omega^{\alpha+1}$.  By standard perturbation arguments, we can assume that there is a block tree $(x_E)_{E\in S_{\alpha+1}\setminus \{\varnothing\}}\subset H^{X_\alpha}_\rho$.  This means that if $(x_n)$ is a branch in the tree and $(a_n)\subset \mathbb{R}^+$, $$\Bigl\|\displaystyle\sum a_nx_n\Bigr\|\geq \rho \displaystyle\sum a_n.$$  

But because $(x_n)$ is a block sequence and the canonical basis for the Schreier space $X_\alpha$ is $1$-unconditional, this means $\rho^{-1}\Bigl\|\displaystyle\sum a_nx_n\Bigr\|\geq \displaystyle\sum |a_n|$ for all $(a_n)\subset \mathbb{R}$.  Thus $(x_E)_{E\in S_{\alpha+1}\setminus \{\varnothing\}}\subset T_b(X_\alpha,\rho^{-1}, (e_n))$.  Then \begin{align*} \omega^{\alpha+1}&=I_b(X_\alpha, (e_n))\geq I_b(X_\alpha, \rho^{-1}, (e_n)) \geq o(S_{\alpha+1})>\omega^{\alpha+1}.\end{align*}

This is a contradiction, and we get the result.  

\end{proof}

\section{proof of main theorems}

We include the proof of the first theorem here for completeness.  It can be found in \cite{FOSZ}, where slightly stronger hypotheses were used.  

\begin{theorem}\cite[Theorem 1.1]{FOSZ}
If $V$ is a Banach space with normalized, $1$-unconditional, shrinking, right dominant basis $(v_n)$ which satisfies subsequential $V$-upper block estimates in $V$, and $X$ is a Banach space with separable dual, then the following are equivalent.
\begin{enumerate} \item $X$ satisfies subsequential $V$-upper tree estimates,
\item $X$ is a quotient of a space $Z$ with $Z^*$ separable and $Z$ has subsequential $V$-upper tree estimates,
\item $X$ is a quotient of a space $Z$ with a shrinking FDD satisfying subsequential $V$-upper block estimates,
\item There exists a $w^*-w^*$ continuous embedding of $X^*$ into $Z^*$, a space with boundedly-complete FDD $(F_i^*)$ satisfying subsequential $V^*$-lower block estimates,
\item $X$ is isomorphic to a subspace of a space with a shrinking FDD satisfying subsequential $V$-upper block estimates.  \end{enumerate}

\end{theorem}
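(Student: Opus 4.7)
The plan is to prove $(3)\Rightarrow(2)\Rightarrow(1)\Rightarrow(4)\Rightarrow(3)$ together with $(5)\Rightarrow(1)$ and $(4)\Rightarrow(5)$. The easy directions come first. For $(3)\Rightarrow(2)$: if $Z$ has a shrinking FDD satisfying subsequential $V$-upper block estimates, then any normalized weakly null even tree can be pruned by a standard perturbation argument (valid because of shrinkingness) to a branch arbitrarily close to a block sequence of the FDD, on which the block estimate yields the desired tree estimate. For $(2)\Rightarrow(1)$: given a quotient $Q:Z\to X$, lift a normalized weakly null even tree in $X$ to a bounded weakly null even tree in $Z$ using openness of $Q$ and the separability of $Z^*$; apply the tree estimate in $Z$ and push the resulting branch back through $Q$, noting that domination is preserved by bounded linear maps. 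For $(5)\Rightarrow(1)$: subspaces inherit tree estimates trivially.

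For $(1)\Rightarrow(4)$ I would first invoke Zippin's theorem to obtain a $w^*$-$w^*$-continuous isomorphic embedding of $X^*$ into a dual $Z^*$ with boundedly complete FDD $(F_i^*)$, so that $Z$ has a shrinking FDD $(F_i)$. By Lemma 2.7 combined with the right-dominance of $(v_n)$, $X^*$ satisfies subsequential $V^{(*)}$-lower $w^*$ tree estimates. Then apply Proposition 2.11 to the set $A$ consisting of pairs $(k_n,x_n^*)\in[\mathbb{N}\times S_{X^*}]^\omega$ for which $(x_n^*)$ $C$-dominates the sequence indexed by $(k_n)$ in the natural basis of $V^{(*)}$, for a suitable constant $C$. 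This produces a blocking $(G_i^*)$ of $(F_i^*)$ such that skipped block sequences of $(G_i^*)$ approximating vectors of $X^*$ witness the required lower $V^{(*)}$ block estimate with respect to their initial supports. Replacing the original FDD by this blocking establishes $(4)$.

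For $(4)\Rightarrow(3)$, pass to the predual $Z$ of $Z^*$: the FDD $(F_i)$ is shrinking, and by the dual of Proposition 2.9 (exchanging upper and lower estimates), it satisfies subsequential $V$-upper block estimates in $Z$; the $w^*$-$w^*$-continuous embedding of $X^*$ into $Z^*$ is the adjoint of a surjection $Q:Z\to X$, so $(3)$ holds. For $(4)\Rightarrow(5)$, one promotes the quotient map just obtained into an isomorphic embedding via a standard construction such as the Davis--Figiel--Johnson--Pe\l czy\'{n}ski factorization applied to an appropriate operator built from $Q$, or equivalently a variant of the $Z^V$-type construction, producing a space with shrinking FDD satisfying subsequential $V$-upper block estimates into which $X$ embeds, the FDD and upper block estimates being inherited from $Z$ via the construction.

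The main obstacle I expect is the step $(1)\Rightarrow(4)$: one must verify that the set $A$ above has the correct closure behavior under the perturbation $A\mapsto\overline{A}_\varepsilon$ appearing in Proposition 2.11, and then carefully extract a blocking that simultaneously witnesses a uniform lower-estimate constant and the proper alignment of initial-support indices required by the definition of subsequential block estimates. A secondary technical point is $(4)\Rightarrow(5)$: promoting a quotient map to an isomorphic embedding without destroying the FDD structure requires either an auxiliary factorization argument or a careful renorming, and it is here that the shrinking nature of the FDD and the $w^*$-$w^*$-continuity of the dual embedding must be exploited in tandem to keep control of the block estimates.
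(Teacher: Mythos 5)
Your logical roadmap ($(3)\Rightarrow(2)\Rightarrow(1)\Rightarrow(4)\Rightarrow(3)$ together with $(5)\Rightarrow(1)$ and $(4)\Rightarrow(5)$) is consistent, and $(5)\Rightarrow(1)$, $(3)\Rightarrow(2)$, and $(4)\Rightarrow(3)$ match the paper's treatment. However, your $(2)\Rightarrow(1)$ contains a genuine gap: you propose to lift a normalized weakly null even tree in $X$ through the quotient $Q:Z\to X$ to a bounded weakly null tree in $Z$, but no such lift exists in general. Openness of $Q$ gives bounded preimages, not weakly null ones, and in $(2)$ the space $Z$ carries no FDD or other structure to exploit. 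The paper instead routes $(2)\Rightarrow(1)$ through $(3)$, proving $(3)\Rightarrow(1)$ via Lemma~3.2 of [FOSZ]: rather than lifting each $x_n$, one shows that for any cut level $n$ and $\varepsilon>0$ there is a \emph{later} element $x_N$ of the node admitting a preimage in $2CB_Z$ supported past $n$ in the bimonotone FDD. The resulting lifted family is a block tree of the FDD, hence weakly null because the FDD is shrinking. This intermediate lemma is the missing ingredient, and it is exactly what makes the FDD hypothesis of $(3)$ indispensable.

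Your $(1)\Rightarrow(4)$ and $(4)\Rightarrow(5)$ also fall short. In $(1)\Rightarrow(4)$, passing to a blocking of the original FDD cannot by itself establish $(4)$: the game proposition (the paper's Proposition~$2.15$, not $2.11$) only controls skipped block sequences close to $Q^*X^*$, while $(4)$ demands subsequential $V^*$-lower block estimates for \emph{all} normalized block sequences of the ambient FDD. The paper's resolution is to \emph{renorm} the dual via the $Z^V$ construction: after the blocking and Johnson's lemma (Proposition~$5.2$), one verifies that $\|\cdot\|_{Z^*}$ and $\|\cdot\|_{(Z^*)^{V^*_M}}$ are equivalent on $Q^*X^*$, so the $w^*$-$w^*$-continuous embedding survives the change of norm; Lemma~$2.11$ then supplies $V^*_M$-lower block estimates in $(Z^*)^{V^*_M}$, and Lemma~$2.13$ fills in the indices in $\mathbb{N}\setminus M$ to upgrade $V^*_M$ to $V^*$. (A further minor error: the quotient $Q:Z\to X$ should come from DFJP, not Zippin; an embedding $X\hookrightarrow Z$ dualizes to a surjection $Z^*\to X^*$, not the embedding of $X^*$ that $(4)$ requires.) Finally, the paper proves $(1)\Rightarrow(5)$ directly, as a substantial construction following Theorem~$4.1$(b) of [OSZ1]: it uses Zippin's embedding, DFJP, the game argument, the Johnson--Zippin blocking lemma, and a bespoke norm $|||\cdot|||$ on $c_{00}(\oplus\tilde F_n)$ with $\tilde F_n=i^*(F_n^*)$, before the $Z^{V^*_M}$ renorming and fill-in. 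Your suggestion to "promote the quotient map to an embedding via DFJP applied to an appropriate operator built from $Q$" is not an argument; converting a quotient presentation of $X$ into an embedding of $X$ while preserving FDD structure and upper block estimates is precisely the content that needs to be proved, and DFJP does not perform this conversion.
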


\begin{proof}

First, we note that $(5)\Rightarrow (1)$ and $(3)\Rightarrow (2)$ are trivial.  

$(1)\Rightarrow (4)$ Let $D\geq 1$ be such that $(v_n)$ is $D$-right dominant.  By the remark preceding Proposition $1$ of \cite{OSZ1}, $(v_n^*)$ is $D$-left dominant.  By \cite[Corollary 8]{DFJP}, there exists a space $Z$ with shrinking, bimonotone FDD $E=(E_n)$ for which there is a quotient map $Q:Z\to X$.  The map $Q^*:X^*\to Z^*$ is an into isomorphism.  After renorming $X$ if necessary, we can assume that $X$ has the quotient norm induced by $Q$, and so $Q^*$ is an isometric embedding.  By Proposition $2.8$, $X^*$ satisfies subsequential $C$-$V^*$-lower $w^*$ tree estimates for some $C\geq 1$.  As $Q^*X^*\subset Z^*$ is $w^*$ closed, we may apply Proposition $2.15$ with $$A=\bigl\{(i_n,x_n)_{n=1}^\infty \in [\mathbb{N}\times S_{Q^*X^*}]: (x_n)\text{\ } C-\text{dominates\ }(v_{i_n})\bigr\}$$

and $\varepsilon>0$ so that $$\overline{A}_\varepsilon \subset \bigl\{(i_n,x_n)\in [\mathbb{N}\times S_{Q^*X^*}]: (x_n) \text{\ }2CD-\text{\ dominates\ } (v_{i_n})\bigr\}.$$

This gives sequences $(K_n)\in [\mathbb{N}]$ and $\overline{\delta}=(\delta_n)\subset (0,1)$ and a blocking $(F_n)$ of $(E^*_n)$ such that if $(x_n)\subset S_{Q^*X^*}$ and $\|x_n-P^F_{(r_{n-1},r_n)}x_n\|<2\delta_n$ for some sequence $(r_n)\in [\mathbb{N}]$, then $(K_{r_{n-1}}, x_n)\in \overline{A}_\varepsilon$.  Hence, the sequence $(x_n)$ $2CD$-dominates $(v_{K_{r_{n-1}}})$.  

Take a blocking $G=(G_n)$ of $(F_n)$ defined by $G_n=\underset{j=m_{n-1}+1}{\overset{m_n}{\bigoplus}}F_j$ for some $(m_n)\in [\mathbb{N}]$ such that there exists $(e_n)\subset S_{Q^*X^*}$ with $\|e_n -P^G_n e_n\|<\frac{\delta_n}{2}$ for all $n$.  In order to continue, we need the following result from \cite{OS1} which is based on an argument due to W.B. Johnson in \cite{J}.  \cite[Corollary 4.4]{OS1} was stated for reflexive spaces.  Here we state it for $w^*$-closed subspaces of dual spaces with a boundedly-complete FDD:  The proof is easily seen to work in this case.  Also note that conditions $(4)$ and $(5)$ which were not stated in \cite{OS1} follow easily from the proof.  

\begin{proposition}\cite[Lemma 4.3, Corollary 4.4]{OS1}

Let $Y$ be a $w^*$-closed subspace of a dual space $Z$ with boundedly-complete FDD $A=(A_n)$ having projection constant $K$.  Let $\overline{\eta}=(\eta_n)\subset (0,1)$ with $\eta_n\downarrow 0$.  Then there exists $(N_n)_{n=1}^\infty\in [\mathbb{N}]$ such that the following holds.  Given $(k_n)_{n=0}^\infty \in [\mathbb{N}]$ and $x\in S_Y$, there exists $x_n\in Y$ and $t_n\in (N_{k_{n-1}}, N_{k_n})$ for all $n\in \mathbb{N}$ with $N_0=0$ and $t_0=0$ such that 

\begin{enumerate}
\item $x=\displaystyle\sum_{n=1}^\infty x_n$ and for all $n\in \mathbb{N}$ we have,
\item either $\|x_n\|<\eta_n$ or $\|x_n-P^A_{(t_{n-1},t_n)}x_n\|<\eta_n \|x_n\|$,
\item $\|x_n-P^A_{(t_{n-1},t_n)}x\|<\eta_n$,
\item $\|x_n\|<K+1$,
\item $\|P^A_{t_n}x\|<\eta_n$.

\end{enumerate}\end{proposition}

We apply Proposition $5.2$ to $Y=Q^*X^*$, $A=G$, and $\overline{\eta}=\overline{\delta}$ which gives a sequence $(N_n)$.  We set $H_n=\underset{i=N_{n-1}+1}{\overset{N_n}{\bigoplus}} G_i$, for each $n\in \mathbb{N}$.  To make notation easier we let $V^*_M=(v^*_{M_n})$ be the subsequence of $(v^*_n)$ defined by $M_n=K_{m_{N_n}}$.  

Fix $x\in S_{Q^*X^*}$ and a sequence $(k_n)_{n=0}^\infty\in [\mathbb{N}]$.  The proof of \cite[Theorem 4.1(a)]{OSZ1} shows $$\Bigl\|\displaystyle\sum_{n=1}^\infty \|P^H_{[k_{n-1},k_n)}x\|_{Z^*} v^*_{M_{k_{n-1}}}\Bigr\|_{V^*}\leq 4D^2C(1+2\Delta+2)+2+3\Delta.$$

where $\Delta=\displaystyle\sum_{n=1}^\infty \delta_n$.  Thus the norms $\|\cdot\|_{Z^*}$ and $\|\cdot\|_{(Z^*)^{V^*_M}}$ are equivalent on $Q^*X^*$.  As the norm on each $H_n$ is unchanged, a coordinate-wise null sequence in $Q^*X^*\subset Z$ will still be coordinate-wise null in $(Z^*)^{V^*_M}$.  Hence the map $Q^*: X^*\to (Z^*)^{V^*_M}$ is still $w^*-w^*$ continuous.  

We have that $(Z^*)^{V^*_M}$ has a boundedly-complete FDD $(H_n)$ which satisfies subsequential $V^*_M$-lower block estimates by Propositions $2.4$ and Lemma $2.11$.  We can now fill in the FDD as in Lemma $2.13$ to get $W=(Z^*)^{V^*_M}\oplus_\infty V^*_{\mathbb{N}\setminus M}$ with FDD $(F_n)$.  The natural embedding of $(Z^*)^{V^*_M}$ into $W$ is $w^*-w^*$ continuous.  Hence there is a $w^*-w^*$ continuous embedding of $X^*$ into $W$.  Finally, from the fact that $(H_n)$ satisfies subsequential $V^*_M$-lower block estimates in $(Z^*)^{V^*_M}$, we get that $(F_n)$ satisfies subsequential $V^*$-lower block estimates in $W$.

$(4)\Rightarrow(3)$ This is clear because if $(F^*_n)$ is a boundedly-complete FDD of $Z^*$, then $(F_n)$ is a shrinking FDD of its predual $Z$ and a $w^*-w^*$ continuous embedding $T:X^*\to Z^*$ must be the adjoint of some quotient map $Q:Z\to X$.  Also, $(F^*_n)$ having subsequential $V^*$-lower block estimates is equivalent to $(F_n)$ having subsequential $V$-upper block estimates by Prposition $2.2$.  

$(3)\Rightarrow (1)$ Let $(F_n)$ be a bimonotone, shrinking FDD which satisfies subsequential $B$-$V$-upper block estimates in $Z$, and $Q:Z\to X$ a quotient map.  Let $D$ be such that $(v_n)$ is $D$-right dominant.  There exists $C>0$ such that $B_X\subset Q(CB_Z)$.  We will need a lemma from \cite{FOSZ}.

\begin{lemma}\cite[Lemma 3.2]{FOSZ} Let $X$ and $Z$ be Banach spaces, $F=(F_n)$ a bimonotone FDD for $Z$, and $Q:Z\to X$ a quotient map.  If $(x_n)\subset S_X$ is weakly null and $Q(CB_Z)\supset B_X$ for some $C>0$, then for all $\varepsilon>0$ and $n\in \mathbb{N}$, there exists $N\in \mathbb{N}$ and $z\in 2CB_Z$ such that $P_{[1,n]}z=0$ and $\|Qz-x_N\|<\varepsilon$.  \end{lemma}

Let $(x_t)_{t\in T^\text{even}_\infty}\subset S_X$ be a weakly null even tree in $X$, and let $\eta\in (0,1)$.  By Lemma $5.3$ we may pass to a full subtree $(x'_t)_{t\in T^\text{even}_\infty}$ so that there exists a block tree $(z_t)_{t\in T^\text{even}_\infty}\subset 2CB_Z$ so that $\|Q(z_t)-x'_t\|<\eta2^{-\ell}$ for all $\ell\in \mathbb{N}$ and $t=(k_1, \ldots, k_{2\ell})\in T^\text{even}_\infty$.  Now choose $1=k_1<k_2<\ldots$ such that $\max \text{supp} z_{(k_1, \ldots, k_{2n})}<k_{2n+1}<\min \text{supp} z_{(k_1, \ldots, k_{2n+2})}$ for all $n$.  Then $(z_{(k_1, \ldots, k_{2n})})$ is $2BC$-dominated by $(v_{k_{2n-1}})$, and hence $(x'_{(k_1, \ldots, k_{2n})})$ is $3BC$-dominated by $(v_{k_{2n-1}})$ provided $\eta$ was chosen sufficiently small.  Finally, the branch $(k_{2n-1}, x'_{(k_1, \ldots, k_{2n})})$ corresponds to a branch $(\ell_{2n-1}, x_{(\ell_1, \ldots, \ell_{2n})})$ in the original tree with $k_n\leq \ell_n$ for all $n$.  Since $(v_n)$ is right dominant, it follows that $(x_{(\ell_1, \ldots, \ell_{2n})})$ is $3BCD$-dominated by $(v_{\ell_{2n-1}})$.  Thus $X$ satisfies subsequential $3BCD$-$V$-upper tree estimates.

$(2)\Rightarrow (1)$ We assume that $X$ is a quotient of a space $Z$ with separable dual such that $Z$ satisfies subsequential $V$-upper tree estimates.  By $(1)\Rightarrow (3)$ applied to $Z$, $Z$ is a quotient of a space $Y$ with shrinking FDD satisfying subsequential $V$-upper block estimates.  $X$ is then also a quotient of $Y$, so by $(3)\Rightarrow (1)$ we have that $X$ satisfies subsequential $V$-upper tree estimates.

$(1)\Rightarrow (5)$ Our proof will be based on the proof of \cite[Theorem 4.1(b)]{OSZ1}.  Assume $X$ satisfies $V$-upper tree estimates.  By a theorem from Zippin \cite{Z}, we may assume, after renorming $X$ if necessary, that there is a Banach space $Z$ with a shrinking, bimonotone FDD $(F_n)$ and an isometric embedding $i:X\to Z$.  Also, by \cite[Corollary 8]{DFJP} there is a Banach space $W$ with shrinking FDD $(E_n)$ and a quotient map $Q:W\to X$.  Thus we have a quotient map $i^*:Z^*\to X^*$ and an embedding $Q^*:X^*\to W^*$.  We can assume, after renorming $W$ if necessary, that $Q^*$ is an isometric embedding.  Note that $(F_n^*)$, $(E^*_n)$ are boundedly-complete FDDs of $Z^*$ and $W^*$, respectively, and that $X^*$ has the quotient norm induced by $i^*$.  Let $K$ be the projection constant of $(E_n)$ in $W$.  

By Proposition $2.8$, $X^*$ satisfies subsequential $C$-$V^*$-lower $w^*$ tree estimates for some $C\geq 1$.  Choose $D\geq 1$ so that $(v_n)$ is $D$ right dominant.  Since $Q^*X^*$ is $w^*$ closed in $W^*$, we can apply Proposition $2.15$ as in $(1)\Rightarrow (4)$.  That is, after blocking $(E^*_n)$, we can find sequences $(K_n)\in [\mathbb{N}]$, and $\overline{\delta}=(\delta_n)\subset (0,1)$ with $\delta_n\downarrow 0$ such that if $(x_n^*)\subset S_{Q^*X^*}$ is a $2K\overline{\delta}$-skipped block of $(E^*_n)$ with $\|x_n^*-P^{E^*}_{(r_{n-1},r_n)} x_n^*\|<2K\delta_n$ for all $n$, where $1\leq r_0<r_1<\ldots$, then $(v^*_{K_{r_{n-1}}})$ is $2CD$-dominated by $(x_n^*)$ and, moreover, using standard perturbation arguments and making $\overline{\delta}$ smaller if necessary, we can assume that if $(w_n^*)\subset W^*$ satisfies $\|x_n^*-w_n^*\|<\delta_n$ for all $n$, then $(w_n^*)$ is a basic sequence equivalent to $(x_n^*)$ with projection constant at most $2K$.  We can also assume $\Delta=\displaystyle\sum_{n=1}^\infty \delta_n<7^{-1}$.  

Choose a sequence $(\varepsilon_n)\subset (0,1)$ with $\varepsilon_n\downarrow 0$ and $3K(K+1)\displaystyle\sum_{i=n}^\infty \varepsilon_i<\delta^2_n$ for all $n$.  After blocking $(E^*_n)$ if necessary, we can assume that for any subsequent blocking $(D_n)$ of $E^*$ there is a sequence $(e_n)$ in $S_{Q^*X^*}$ such that $\|e_n-P^D_n e_n\|<\frac{\varepsilon_n}{2K}$ for all $n$.  

Using Johnson and Zippin's blocking lemma \cite{JZ} we may assume, after further blocking our FDDs $(E_n^*)$ and $(F^*_n)$ if necessary, that given $k<\ell$, if $z^*\in \bigoplus_{n\in (k,\ell)}F^*_n$ with $\|z^*\|\leq 1$, then $\|P^{E^*}_{[1,k)}Q^*i^*z^*\|<\varepsilon_k$ and $\|P^E_{[\ell,\infty)}Q^*i^*z^*\|<\varepsilon_\ell$, and that this holds if one passes to any further blocking of $(F^*_n)$ and the corresponding blocking of $(E^*_n)$.  Note that the hypotheses of the Johnson-Zippin lemma are not satisfied here, but the proof is seen to apply since we have boundedly-complete FDDs and the map $Q^*i^*$ is $w^*-w^*$ continuous.  

We now continue as in the proof of \cite[Theorem 4.1(b)]{OSZ1}.  We replace $F^*_n$ by the quotient space $\tilde{F}_n=i^*(F^*_n)$.  We let $\tilde{Z}$ be the completion of $c_{00}\Bigl(\oplus \tilde{F}_n\Bigr)$ with respect to the norm $|||\cdot|||$ defined in \cite{OSZ1} on $c_{00}\Bigl(\oplus_{n=1}^\infty \tilde{F}_n\Bigr)$ to be $$|||\tilde{z}|||= \underset{k<m}{\max}\Bigl\|\sum_{n=k}^m i^*(z_n)\Bigr\|,$$ where $\tilde{z} = \sum \tilde{z}_n$.  We obtain a quotient map $\tilde{i}:\tilde{Z}\to X^*$.  We note that the result corresponding to \cite[Proposition 4.9(b),(c)]{OSZ1} are valid here as their proof does not require reflexitivity (part $(a)$ is not required, nor valid, here).  

Finally, we find a blocking $(\tilde{G}_n)$ of $(\tilde{F}_n)$ and a subsequence $V_M^*=(v^*_{m_n})$ such that $\tilde{i}$ is still a quotient map of $\tilde{Z}^{V^*_M}(\tilde{G})$ onto $X^*$ and it is still $w^*-w^*$ continuous (note that $(\tilde{G}_n)$ is boundedly-complete in $\tilde{Z}^{V^*_M}(\tilde{G})$ by Proposition $2.10$).  To find suitable $\tilde{G}$ and $(m_n)$ we follow the proof of \cite[Theorem 4.1(b)]{OSZ1} verbatim.  We only need to note that \cite[Lemma 4.10]{OSZ1} is valid since we are working with boundedly-complete FDDs and $w^*-w^*$ continuous maps.  Note that $\tilde{G}$ satisfies subsequential $V_m^*$-lower block estimates in $\tilde{Z}^{V^*_M}(\tilde{G})$ by Lemma $2.11$.  Again, we fill out the FDD as in Lemma $2.13$ to obtain $Y=\tilde{Z}^{V^*_M}(\tilde{G})\oplus_\infty V^*_{\mathbb{N}\setminus M}$ with FDD satisfying subsequential $V^*$-lower block estimates in $W$.  Since the corresponding FDDs in the sum are boundedly-complete, so is the FDD for $Y$.  The quotient map we have obtained onto $Y$ is therefore the adjoint of an embedding of $X$ into the predual of $Y$.  By Proposition $2.7$, since $Y$ satisfies subsequential $V^*$-lower block estimates, the predual satisfies subsequential $V$-upper block estimates.

\end{proof}

The following proof is similar to that contained in \cite{FOSZ} of a similar statement with the hypothesis of block stability.  For completeness, we include a proof of the more general statement with weaker hypotheses.  In it, we make reference to the class $\mathcal{A}_V$, which was introduced before Proposition $2.7$.

\begin{theorem}\cite[Corollary 3.3]{FOSZ}
Let $V$ be a Banach space with normalized, $1$-unconditional, shrinking, right dominant basis $(v_n)$ which satisfies subsequential $V$-upper block estimates in $V$.  Then the class $\mathcal{A}_V$ contains a universal element $Z$ which has shrinking, bimonotone FDD.   \end{theorem}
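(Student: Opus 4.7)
The plan is to adapt the strategy of \cite[Corollary 3.3]{FOSZ} by carrying out the universal construction on the dual side. The hypothesis that $(v_n)$ satisfies subsequential $V$-upper block estimates in $V$ dualizes, via Proposition $2.7$, to $(v_n^*)$ satisfying subsequential $V^*$-lower block estimates in $V^*$---precisely the form that interacts correctly with the $Z^V$ construction via Lemma $2.11$. This dual formulation is the technical innovation over \cite{FOSZ}: in the block-stable case one can identify $Y$ with $Y^V$ on the primal side, which fails here.

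Invoke Schechtman's universal FDD theorem \cite{Sch} to fix a Banach space $W$ with shrinking bimonotone FDD $(F_n)$ such that every space with shrinking FDD embeds almost-isometrically into $W$ with FDD structure preserved up to blocking of $(F_n)$. Form $Z^*=(W^*)^{V^*}$ per Definition $2.9$. By Proposition $2.10$, $(F_n^*)$ is a boundedly-complete FDD for $Z^*$; by Lemma $2.11$ it satisfies subsequential $V^*$-lower block estimates in $Z^*$. Let $Z$ denote the predual of $Z^*$: then $(F_n)$ is a shrinking FDD for $Z$ by Proposition $2.10$ (renormable to be bimonotone), and satisfies subsequential $V$-upper block estimates by Proposition $2.7$, so $Z\in\mathcal{A}_V$.

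For universality, fix $X\in\mathcal{A}_V$. By Theorem $5.1(5)$, $X$ embeds into some $Y$ with shrinking FDD $(G_n)$ and subsequential $V$-upper block estimates; by Theorem $5.1(4)$, $Y^*$ carries bc FDD $(G_n^*)$ with subsequential $V^*$-lower block estimates. Schechtman gives $Y\hookrightarrow W$ almost-isometrically with $(G_n)$ realized as a blocking of $(F_n)$, and dually $Y^*\hookrightarrow W^*$ $w^*$-$w^*$-continuously. Since $\|\cdot\|_{Z^*}\ge\|\cdot\|_{W^*}$ on $c_{00}(\bigoplus F_n^*)$, one has $W\subseteq Z$ set-theoretically with $\|\cdot\|_Z\le\|\cdot\|_W$, giving $Y\subseteq W\subseteq Z$ and the inequality $\|y\|_Z\le\|y\|_W\lesssim\|y\|_Y$. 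The key step is the reverse inequality $\|y\|_Z\gtrsim\|y\|_Y$. To obtain it, verify first that on $Y^*\subseteq W^*$ the restriction of $\|\cdot\|_{Z^*}$ is equivalent to $\|\cdot\|_{Y^*}$: refine any partition defining the $Z^*$-norm of $y^*$ to align with $(G_n^*)$ so that the blocks of $y^*$ in $W^*$ are, up to Schechtman's perturbation, blocks of $y^*$ in $Y^*$, then apply Proposition $2.4$ and the $V^*$-lower block estimates to bound the partition sum above by $C\|y^*\|_{Y^*}$. Given this norm equivalence, for $y\in Y$ take a norming $y^*\in Y^*$ and extend to $W^*$; then $y^*/\|y^*\|_{Z^*}$ lies in the $Z^*$-unit ball and pairs with $y$ to give $\|y\|_Z\ge\|y\|_Y/\|y^*\|_{Z^*}\gtrsim\|y\|_Y$. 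Hence $Y\hookrightarrow Z$ isomorphically, and composing with $X\hookrightarrow Y$ yields $X\hookrightarrow Z$.

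The main obstacle is the lack of block stability of $V$, which precludes the primal identification $Y\cong Y^V$ used in \cite{FOSZ} and would otherwise give $Y\hookrightarrow W^V$ directly. This is overcome by working with $(W^*)^{V^*}$: the generalized Lemma $2.11$ and Proposition $2.10$ apply under the weaker hypothesis that $V^*$ satisfies only $V^*$-lower block estimates in itself, and the norm-equivalence $\|\cdot\|_{(Y^*)^{V^*}}\sim\|\cdot\|_{Y^*}$ produced by these lower estimates (together with Proposition $2.4$) is precisely what drives the subspace embedding $Y\hookrightarrow Z$ through the duality pairing argument above.
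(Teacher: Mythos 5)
Your proposal follows essentially the same route as the paper: Schechtman's universal FDD space $W$, the dual construction $Z^*=(W^{(*)})^{V^*}$, the observation that $(E_n)$ is then a shrinking FDD of the predual $Z$ satisfying $V$-upper block estimates, reduction of universality to the case of $X$ with shrinking FDD satisfying $V$-upper block estimates via Theorem 5.1, and then a norm-equivalence argument showing $\|\cdot\|_W\sim\|\cdot\|_Z$ on the copy of $X$ inside $W$. You have also correctly identified the technical innovation: using $V^*$-lower block estimates (Lemma 2.11 / Proposition 2.4) in place of block stability.

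Two imprecisions in the sketch of the key norm-equivalence step are worth flagging, since they would need to be repaired in a complete write-up. First, Schechtman's embedding realizes the FDD of $X$ as a \emph{subsequence} $(E_{k_n})$ of $W$'s FDD (with the complementary coordinates projected away by a norm-one projection), not as a blocking; this distinction is what creates the indexing mismatch between the ambient labels $m_j$ and the intrinsic labels $j$ of $X$'s FDD. Second, your description says to ``refine'' the optimal partition and then apply lower block estimates; the paper instead works with the maximizing partition directly (discarding empty blocks) and passes through a three-step chain of inequalities, where the middle step uses the $C$-\emph{left dominance} of $(v_n^*)$ to move from $v^*_{k_{j_n}}$ to $v^*_{j_n}$. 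Left dominance (equivalently, right dominance of $(v_n)$) is an explicit hypothesis of the theorem, and it is precisely what handles the reindexing; your sketch bundles everything under ``Proposition 2.4 and the $V^*$-lower block estimates'' and never invokes it, so as written the translation from $W$-indices to $X$-indices is not actually justified.
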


\begin{proof} By a result of Schechtman \cite{Sch}, there exists a space $W$ with bimontone FDD $E=(E_n)$ with the property that any space $X$ with bimonotone FDD $F=(F_n)$ naturally almost isometrically embeds into $W$.  Moreover, for any $\varepsilon>0$, there exists a $(1+\varepsilon)$-embedding $T:X\to W$ and $(k_n)\in [\mathbb{N}]$ so that $T(F_n)=E_{k_n}$ and $\displaystyle\sum_{n=1}^\infty P^E_{k_n}$ is a norm-$1$ projection of $W$ onto $T(X)$.  

Since the basis $(v_n^*)$ of $V^*$ is boundedly-complete, it follows from Proposition $2.10$ that the sequence $(E_n^*)$ is a boundedly-complete FDD for the space $(W^{(*)})^{V^*}$.  It follows that $(E_n)$ is a shrinking FDD of the space $Z=\bigl((W^{(*)})^{V^*}\bigr)^{(*)}$ and that $Z^*=(W^{(*)})^{V^*}$.  We denote by $\|\cdot\|_W$, $\|\cdot\|_{W^{(*)}}$, $\|\cdot\|_Z$, and $\|\cdot\|_{Z^*}$ the norms of $W, W^{(*)}, Z,$ and $Z^*$, respectively.  

By Lemma $2.11$, $(E_n)$ satisfies subsequential $V^*$-lower block estimates in $Z^*$. By Proposition $2.7$, $(E_n)$ satisfies subsequential $V$-upper block estimates in $Z$.  This is because $Z^*=Z^{(*)}$.  

If $X$ is any space with separable dual and subsequential $V$-upper tree estimates, $X$ embeds into a space $Y$ with shrinking, bimonotone FDD which satisfies subsequential $V$-upper block estimates.  If we prove the result for $Y$, this will imply the result for $X$, so we can assume that $X$ itself has shrinking, bimonotone FDD $F=(F_n)$ satisfying subsequential $V$-upper block estimates.  By our choice of $W$, we can also assume $X$ is a $1$-complemented subspace of $W$ and that $(F_n)=(E_{k_n})$ for some subsequence $(k_n)$ of $\mathbb{N}$.  It suffices to show that the norms $\|\cdot\|_W$ and $\|\cdot\|_Z$ are equivalent on $X$.  

Let $C\geq 1$ be chosen so that $(E^*_{k_n})$ satisfies subsequential $C$-$V^*$-lower block estimates in $X^*$, $(v_n)$ is $C$-right dominant and satisfies subsequential $C$-$V$-upper block estimates in $V$.  This means $(v_n^*)$ is $C$-left dominant and satisfies subsequential $C$-$V^*$-lower block estimates in $V^*$.  Let $w^*\in c_{00}\bigl(\oplus E^*_{k_n}\bigr)$.  Clearly $\|w^*\|_{W^{(*)}}\leq \|w^*\|_{Z^*}$.  

Choose $1\leq m_0<m_1<\ldots<m_\ell$ so that $$\|w^*\|_{Z^*}=\Bigl\|\displaystyle\sum_{n=1}^\ell \|P^{E^*}_{[m_{n-1},m_n)}w^*\|_{W^{(*)}}v^*_{m_{n-1}}\Bigr\|_{V^*}.$$ 

By discarding terms from the tuple $(m_n)$, we can assume $P^{E^*}_{[m_{n-1},m_n)}w^*\neq 0$ for each $n$.  We must, however, be judicious about choosing how to discard elements from the tuple, since discarding elements from $(m_n)$ affects which of the vectors $v^*_{m_n}$ occur in the sum above.  If $P^{E^*}_{[m_{n-1},m_n)}w^*=0$, we delete $m_{n-1}$, not $m_n$, from the tuple.  This leaves the sum above unchanged.  If we instead delete $m_n$ when $P^{E^*}_{[m_{n-1},m_n)}w^*=0$, this may change the value of the above norm if $(v_n^*)$ fails to be $1$-right dominant.  

Choose $j_1<j_2<\ldots<j_\ell$ so that $k_{j_n}=\min \text{supp}_{E^*} P^{E^*}_{[m_{n-1}, m_n)}w^*$.  Then

\begin{align*} \|w^*\|_{Z^*}&=\Bigl\|\displaystyle\sum_{n=1}^\ell \|P^{E^*}_{[m_{n-1}, m_n)}w^*\|_{W^{(*)}}v^*_{m_{n-1}}\Bigr\|_{V^*} \\ & \leq C\Bigl\|\displaystyle\sum_{n=1}^\ell \|P^{E^*}_{[m_{n-1},m_n)}w^*\|_{W^{(*)}}v^*_{k_{j_n}}\Bigr\|_{V^*} \\ & \leq C^2\Bigl\|\displaystyle\sum_{n=1}^\ell \|P^{F^*}_{[j_n,j_{n+1})}w^*\|_{W^{(*)}}v^*_{j_n}\Bigr\|_{V^*}\leq C^3 \|w^*\|_{W^{(*)}}.\end{align*}

The first inequality comes from the fact that $(v^*_n)$ satisfies subsequential $C$-$V^*$-lower block estimates in $V^*$ and an application of Proposition $2.4$.  The second comes from $C$-left dominance.  The third comes from the fact that $(F^*_n)$ satisfies subsequential $C$-$V^*$-lower block estimates in $X^*$.  

This proves that $\|\cdot\|_{W{(*)}}$ and $\|\cdot\|_{Z^*}$ are equivalent on $c_{00}\bigl(\oplus E^*_{k_n}\bigr)$.  Since $X$ is $1$-complemented in $W$, $X^*$ is $1$-complemented in $W^{(*)}$.  Since $\displaystyle\sum_n P^{E^*}_{k_n}$ is still a norm-$1$ projection from $Z^*$ onto $\overline{c_{00}\bigl(\oplus E_{k_n}\bigr)}^{Z^*}$, it follows that for any $w\in c_{00}\bigl(\oplus E_{k_n}\bigr)$ that $$C^{-3}\|w\|_W\leq \|w\|_Z\leq \|w\|_W,$$
which gives the claim.  

\end{proof}

In the following theorem, $X_\alpha$ denotes the Schreier space of order $\alpha$, defined before Proposition $3.1$, and $(e_i)$ is the unit vector basis of $X_\alpha$.

\begin{theorem}
Let $\alpha<\omega_1$ and $C>2$.  Let $Z$ be a Banach space with a shrinking, bimonotone FDD $(E_n)$, and let $X$ be an infinite dimensional closed subspace.  If $Sz(X)\leq \omega^\alpha$ then there exists $M=(m_n)_{n=0}^\infty\in [\mathbb{N}]$ with $1=m_0\leq m_1<\ldots$ and $\overline{\delta}=(\delta_n)\subset (0,1)$ so that if $(x_n)$ is a normalized $\overline{\delta}$-block sequence of $H=(H_n)$, where $H_n=\underset{i=m_{n-1}}{\overset{m_n-1}{\bigoplus}}E_i$, with $\|x_n-P^H_{[s_{n-1},s_n)}x_n\|<\delta_n$ for some $0\leq s_0<s_1<\ldots$, then $(x_n)$ is $C$-dominated by $(e_{m_{s_{n-1}}})\subset X_\alpha$.  \end{theorem}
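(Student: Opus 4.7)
My plan is to argue by contradiction, using Proposition $4.1$ applied to the Bourgain $\ell_1^+$-tree $\mathcal{H}_\rho^X$. First I translate the hypothesis $Sz(X)\leq \omega^\alpha$ via Theorem $4.4$ into $I_w(\mathcal{H}_\rho^X)\leq \omega^\alpha$ for every $\rho\in(0,1)$. Since $C>2$, I fix $\rho$ with $2/C<\rho<1$. By Proposition $4.1$, no family $(x_F)_{F\in S_\alpha\setminus\{\varnothing\}}\subset \mathcal{H}_\rho^X$ can have every node $(x_{F\cup\{n\}})_{n>\max F}$ weakly null. The strategy is to assume that no blocking $M$ and tolerance sequence $\overline{\delta}$ satisfy the conclusion, and from that assumption produce such a forbidden family.

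The family is constructed by induction on $F\in S_\alpha$ (ordered by extension) together with an inductive refinement of the blocking $M^F$ and tolerance $\overline{\delta}^F$. At each node $F$, the negation of the theorem furnishes a normalized $\overline{\delta}^F$-skipped block sequence $(y_n^F)$ of the corresponding $H^F$ that is not $C$-dominated by the Schreier basis vectors at its min-supports. From this failure I extract, by sign adjustments and passage to a subsequence, a single unit vector $x_{F\cup\{n\}}$ continuing the branch through $F$, together with a norming functional, so that every branch accumulated so far still satisfies a $\rho$-$\ell_1^+$ estimate. The factor $2$ appearing in Proposition $3.1$ (the upper block estimate of $(e_n)$ in itself with constant $2$) is what forces the requirement $C>2$ and the choice $\rho>2/C$: the branch $\ell_1^+$-estimate must survive the loss of factor $2$ incurred when passing between the actual $E$-supports of the skipped blocks and the block-minima $m_{s_{n-1}}$. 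Weak nullity of each node is secured by invoking the separability of $X^*$ (from $Sz(X)<\omega_1$), which metrizes the weak topology on bounded sets, and carrying out a diagonal subsequence extraction compatible with $\delta_n\downarrow 0$; Gasparis' theorem (Theorem $3.2$) is used to align the combinatorics of the chosen index set with $S_\alpha$.

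I expect the main obstacle to be the extraction step: converting the failure $\|\sum a_n y_n\|>C\|\sum a_n e_{m_{s_{n-1}}}\|_\alpha$ into a clean $\rho$-$\ell_1^+$ estimate on a sub-sequence whose min-support pattern in $M$ matches the $S_\alpha$-tree shape currently being built. The Schreier norm is a maximum over $S_\alpha$, so failure of $C$-domination asserts $\ell_1^+$-type growth on sets \emph{outside} $S_\alpha$, whereas for the contradiction via Proposition $4.1$ I need $\rho$-$\ell_1^+$ growth on sub-sequences whose supports correspond, through $M$, to elements of $S_\alpha$. Bridging this gap requires combining the spreading and hereditary character of $S_\alpha$, Gasparis' dichotomy, and a careful sign/convexity argument to isolate an $\ell_1^+$-basic sub-sequence of the correct combinatorial shape while preserving the weak-null node property required for the $\mathcal{H}_\rho^X$ tree.
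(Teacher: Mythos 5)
Your proposal is a contradiction argument: assume every blocking $M$ and tolerance $\overline{\delta}$ fail, and try to extract from the accumulated failures a family $(x_F)_{F\in S_\alpha\setminus\{\varnothing\}}\subset\mathcal{H}^X_\rho$ with weakly null nodes, then contradict $I_w(\mathcal{H}^X_\rho)\leq\omega^\alpha$ via Proposition $4.1$. This is not the route the paper takes, and there is a genuine gap precisely at the step you yourself flag as the ``main obstacle.'' Failure of $C$-domination by $(e_{m_{s_{n-1}}})$ asserts $\bigl\|\sum a_nx_n\bigr\|>C\max_{E\in S_\alpha}\sum_{m_{s_{n-1}}\in E}|a_n|$ for a single coefficient vector $(a_n)$; this says only that no \emph{one} element of $S_\alpha$ captures a $C^{-1}$-fraction of the mass. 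It does not produce, even after sign adjustment and passage to a subsequence, a sub-family whose coefficient supports trace out an $S_\alpha$-tree \emph{and} carry a uniform $\rho$-$\ell_1^+$ lower estimate along every branch. Those are the wrong-direction inequalities: a Schreier-shaped sub-sequence of $(x_n)$ has its Schreier norm equal to its full $\ell_1$ sum, so restricting to such shapes destroys exactly the excess you would need. In short, you cannot turn ``no Schreier set captures most of the mass'' into ``this Schreier-indexed family is $\ell_1^+$-basic along every branch'' without a new idea, and none is offered.

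The paper argues directly rather than by contradiction, and the logic is reversed in an important way. It introduces a whole scale of $\ell_1^+$-trees $\mathcal{F}_j$ in $X$ with thresholds $2\rho^{j+1}$ (not a single $\mathcal{H}^X_\rho$), passes to their block perturbations $\mathcal{G}_j$ in $Z$ and compressions $\tilde{\mathcal{G}}_j$, and uses Propositions $4.2$--$4.3$ plus Theorem $4.4$ to get $I_{CB}(\tilde{\mathcal{G}}_j)<\omega^\alpha$. Gasparis' dichotomy is then applied \emph{before} any estimate is attempted, iteratively producing $M_1\supset M_2\supset\ldots$ with $\tilde{\mathcal{G}}_j\cap[M_j]^{<\omega}\subset S_\alpha$; the wrong containment $S_\alpha\cap[M_j]^{<\omega}\subset\tilde{\mathcal{G}}_j$ is ruled out purely by Cantor--Bendixson index comparison. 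The diagonal $m_n=\min M_n$ gives $M$. The domination estimate is then proved directly: take a norming functional $w^*$ for $\sum a_nw_n$ and partition the coordinate set into level sets $I_j^{\pm},J_j^{\pm}$ by the size of $w^*(w_n)$; the containment $ms(J_j^{\pm})\in\tilde{\mathcal{G}}_j\cap[M_j]^{<\omega}\subset S_\alpha$ bounds each $J_j^{\pm}$-sum by $\rho^{j-1}\|\sum a_ne_{m_{s_{n-1}}}\|_\alpha$, the small sets $I_j^{\pm}$ are controlled trivially by cardinality, and the geometric series $\sum j\rho^{j-1}=(1-\rho)^{-2}$ yields the constant $2(1-\rho)^{-2}<D<C$. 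Incidentally this shows your heuristic for where $C>2$ comes from is also off: it is not the factor $2$ from Proposition $3.1$, but the infimum of $2(1-\rho)^{-2}$ over $\rho\in(0,1/3)$.
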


\begin{proof}

Fix $2<D<C$.  Choose $\rho\in (0,\frac{1}{3})$ so that $2(1-\rho)^{-2}<D$.  Let $$\mathcal{F}_n=\Bigl\{(x_j)\in S_X^{<\omega}:\Bigl\|\displaystyle\sum a_jx_j\Bigr\|\geq 2\rho^{n+1}\displaystyle\sum a_j \text{\ \ }\forall (a_j)\subset \mathbb{R}^+\Bigr\}.$$

Then $\mathcal{F}_n$ is a hereditary tree on $S^{<\omega}_X$ for each $n$.  Next, for each $n$, fix $\overline{\varepsilon}_n=(\varepsilon_{i,n})_{i=1}^\infty \subset (0,1)$ so that $10\displaystyle\sum_i \varepsilon_{i,n}\leq \rho^{n+1}$ and both functions $i,n\mapsto \varepsilon_{i,n}$ are decreasing.  We note that the requirement that $10\displaystyle\sum_i \varepsilon_{i,n}\leq \rho^n$ means that \begin{equation}(\mathcal{F}_n)_{10\overline{\varepsilon}_n}^Z\subset \Bigl\{(z_j)\in S_Z^{<\omega}:\Bigl\|\displaystyle\sum a_jz_j\Bigr\|\geq \rho^{n+1} \displaystyle\sum a_j\text{\ } \forall (a_j)\subset \mathbb{R}^+\Bigr\}.\label{1}\end{equation}

Let $\mathcal{G}_n=\Sigma(E,Z)\cap (\mathcal{F}_n)_{\overline{\varepsilon}_n}^Z$.  This is a hereditary block tree of $(E_n)$ in $Z$.  Let $\mathcal{\tilde{G}}_n$ be its compression.  By Proposition $4.2$, $I_w((\mathcal{F}_n)_{2\overline{\varepsilon}_n}^Z)\leq I_w((\mathcal{F}_n)_{10\overline{\varepsilon}_n}^X)$. 

Because of the containment in $(1)$, Theorem $4.4$ implies $I_w((\mathcal{F}_n)_{10\overline{\varepsilon}_n}^X)<Sz(X)$.  

Since $(\mathcal{G}_n)^{E,Z}_{\overline{\varepsilon_n}}\subset (\mathcal{F}_n)_{2\overline{\varepsilon}_n}^Z$, we have $I_{\text{bl}}((\mathcal{G}_n)^{E,Z}_{\overline{\varepsilon}_n})\leq I_w((\mathcal{F}_n)_{2\overline{\varepsilon}_n}^Z)$.  Since $Sz(X)$ is a limit ordinal, Proposition $4.3$ gives that  $$I_{CB}(\mathcal{\tilde{G}}_n)<Sz(X)\leq \omega^\alpha.$$

Put $M_0=\mathbb{N}\setminus \{1\}$.  We note that $S_\alpha$ and $\mathcal{\tilde{G}}_1$ are hereditary trees on $[\mathbb{N}]^{<\omega}$.  By Theorem $3.2$, there exists some $M_1\in [M_0\setminus\{\min M_0\}]$ so that either $$S_\alpha\cap [M_1]^{<\omega}\subset \mathcal{\tilde{G}}_1 \text{\ \ or\ \ } \mathcal{\tilde{G}}_1\cap [M_1]^{<\omega}\subset S_\alpha.$$

If we let $M_1=(m_n^{(1)})$, then the map $n\mapsto m_n^{(1)}$ induces a homeomorphism between $S_\alpha$ and $S_\alpha\cap [M_1]^{<\omega}$.  Since $I_{CB}(S_\alpha)=\omega^\alpha+1$, we cannot have the first containment.  Thus $\mathcal{\tilde{G}}_1\cap [M_1]^{<\omega}\subset S_\alpha$.  

Next, assume we have chosen $M_1\supset M_2\supset\ldots M_\ell$ so that $\min M_n<\min M_{n+1}$ for each $1\leq n<\ell$ and $\mathcal{\tilde{G}}_n \cap [M_n]^{<\omega}\subset S_\alpha$ for each $1\leq n\leq \ell$.  Apply Theorem $3.2$ again to get a set $M_{\ell+1}\in [M_\ell\setminus \{\min M_\ell\}]$ so that either $$S_\alpha\cap [M_{\ell+1}]^{<\omega}\subset \mathcal{\tilde{G}}_{\ell+1} \text{\ \ or\ \ } \mathcal{\tilde{G}}_{\ell+1}\cap [M_{\ell+1}]^{<\omega}\subset S_\alpha.$$  

For the same reason as before, the first containment cannot hold.  Thus we have a decreasing sequence $(M_n)\subset [\mathbb{N}]$ so that $1<\min M_1<\min M_2<\ldots$ and $\mathcal{\tilde{G}}_n\cap [M_n]^{<\omega}\subset S_\alpha$ for each $n$.  We let $m_0=1$, $m_n=\min M_n$, and $M=(m_n)_{n\geq 0}$.  Note that $(m_i)_{i\geq n}\subset M_n$ for each $n$.  

Fix a sequence $\overline{\delta}=(\delta_n)\subset (0,1)$ so that for each $n$, \begin{equation}3\delta_n<\min \{\varepsilon_{n,n}, \rho^{-n-1}\}, \text{\ and}\label{2}\end{equation} \begin{equation} 3\displaystyle\sum_{n=1}^\infty \delta_n <C-D.\label{3}\end{equation}

Suppose $(x_n)$ is a $\overline{\delta}$-block sequence in the blocked FDD $G$ defined as in the statement of the theorem using the chosen $m_n$, and $1\leq s_1<s_2<\ldots $ is such that $\|P^G_{[s_{n-1},s_m)}x_n-x_n\|<\delta_n$.  

Define $$z_n=\frac{P^H_{[s_{n-1},s_n)}x_n}{\|P^H_{[s_{n-1},s_n)}x_n\|}.$$  

It follows from this definition that $\|z_n-x_n\|<2\delta_n$.  Let $(w_n)$ be a normalized block sequence so that $\text{supp}_H w_n\subset \text{supp}_H z_n$, $\|z_n-w_n\|<\delta_n$, and $\min \text{supp}_E w_n=m_{s_{n-1}}$.  

Then $\|x_n-w_n\|<3\delta_n$ for each $n$.  From $(5.3)$, it suffices to prove that $(w_n)$ is $D$-dominated by $(e_{m_{s_{n-1}}})$ to show that $(x_n)$ is $C$-dominated by $(e_{m_{s_{n-1}}})$.  

Fix $a=(a_n)\in c_{00}$.  Let $w^*\in S_{Z^*}$ be such that $w^*\Bigl(\displaystyle\sum_{n=1}^\infty a_nw_n\Bigr)=\Bigl\|\displaystyle\sum_{n=1}^\infty a_nw_n\Bigr\|$.  For any $F\subset \mathbb{N}$, we let $ms(F)=\{m_{s_{n-1}}:n\in F\}$.  For each $j$, let \begin{align*} I^+_j & =\{n\in \text{supp}(a):n<j, \rho^j< w^*(w_n)\leq \rho^{j-1}\}, \\ I^-_j & =\{n\in \text{supp}(a): n<j, \rho^j<-w^*(w_n)\leq \rho^{j-1}\}, \\ J^+_j&=\{n\in \text{supp}(a): n\geq j, \rho^j<w^*(w_n)\leq \rho^{j-1}\}, \\ J^-_j&=\{n\in \text{supp}(a): n\geq j, \rho^j<-w^*(w_n)\leq \rho^{j-1}\}.\end{align*}

We will prove that $ms(J_j^{\pm})\in S_\alpha$ for each $j$.  We note that $s_{n-1}\geq n$, which means $$ms(J^{\pm}_j)=(m_{s_{n-1}})_{n\in J^{\pm}_j}\subset (m_n)_{n\geq j}\subset M_j.$$

We will show that $(w_n)_{n\in ms(J^+_j)}\in \mathcal{G}_j =\Sigma(E,Z)\cap (\mathcal{F}_j)^Z_{\overline{\varepsilon}_n}$.  Containment in $\Sigma(E,Z)$ is clear.  For each $n\in ms(J_j^+)$, $$w^*(x_n)\geq w^*(w_n)-w^*(w_n-x_n)>\rho^j-3\delta_j\geq \rho^j-\rho^{j+1}>2\rho^{j+1}.$$  

Here, we use the definition of $J^+_j$ and the fact that $\rho<\frac{1}{3}$.  By the geometric version of the Hahn-Banach Theorem, the existence of such a $w^*\in B_{Z^*}$ is sufficient to give that $(x_n)_{n\in J^+_j}\in \mathcal{F}_j$.  

Since $\min J^+_j\geq j$, $n\in J^+_j$, $$\|x_n-w_n\|<3\delta_n\leq \varepsilon_{n,n}\leq \varepsilon_{j,n}.$$

Thus $(w_n)_{n\in J^{\pm}_j}$ is a $\overline{\varepsilon}_j$ perturbation of $(x_n)_{n\in J^+_j}$, hence $(w_n)_{n\in J^+_j}\in \mathcal{G}_j$.  This means $ms(J^+_j)\in \mathcal{\tilde{G}}_j$.  Combining these results yields $$ms(J^+_j)\in \mathcal{\tilde{G}}_j\cap[M_j]^{<\omega}\subset S_\alpha.$$

A similar argument using $-w^*$ gives that $ms(J^-_j)\in S_\alpha$.  

We note that \begin{align*} \displaystyle\sum_{n\in J^{\pm}_j} a_nw^*(w_n)& \leq \rho^{j-1}\displaystyle\sum_{n\in J^{\pm}_j} |a_n| \\ &=\rho^{j-1} \Bigl\|\displaystyle\sum_{n\in J^{\pm}_j}a_ne_{m_{s_{n-1}}}\Bigr\|_{X_\alpha}\leq \rho^{j-1}\Bigl\|\displaystyle\sum_{n=1}^\infty a_n e_{m_{s_{n-1}}}\Bigr\|_{X_\alpha}.\end{align*}

By $1$-unconditionality,  $|a_k|\leq \Bigl\|\displaystyle\sum_{n=1}^\infty a_n e_{m_{s_{n-1}}}\Bigr\|_{X_\alpha}$.  Because $|I^{\pm}_j|<j$, it follows that $$\displaystyle\sum_{n\in I^{\pm}_j}a_nw^*(w_n)\leq \rho^{j-1}(j-1)\Bigl\|\displaystyle\sum_{n=1}^\infty a_ne_{m_{s_{n-1}}}\Bigr\|_{X_\alpha}.$$

It follows that \begin{align*} \Bigl\|\displaystyle\sum_{n=1}^\infty a_nw_n\Bigr\| & =\displaystyle\sum_{j=1}^\infty \displaystyle\sum_{n\in I^+_j}a_nw^*(w_n)+\displaystyle\sum_{j=1}^\infty \displaystyle\sum_{n\in I^-_j}a_nw^*(w_n) \\ & +\displaystyle\sum_{j=1}^\infty \displaystyle\sum_{n\in J^+_j}a_n w^*(w_n)+\displaystyle\sum_{j=1}^\infty \displaystyle\sum_{n\in J^-_j} a_nw^*(w_n) \\ &\leq \Bigl\|\displaystyle\sum_{n=1}^\infty a_ne_{m_{s_{n-1}}}\Bigr\|_{X_\alpha} \displaystyle\sum_{j=1}^\infty \Bigl(2(j-1)\rho^{j-1}+2\rho^{j-1} \Bigr) \\ & =2\Bigl\|\displaystyle\sum_{n=1}^\infty a_ne_{m_{s_{n-1}}}\Bigr\|_{X_\alpha}\displaystyle\sum_{j=1}^\infty j\rho^{j-1} \\ &=\frac{2}{(1-\rho)^2}\Bigl\|\displaystyle\sum_{n=1}^\infty a_ne_{m_{s_{n-1}}}\Bigr\|_{X_\alpha}<D\Bigl\|\displaystyle\sum_{n=1}^\infty a_n e_{m_{s_{n-1}}}\Bigr\|_{X_\alpha}.  \end{align*}

This implies the desired conclusion.

\end{proof}

We are now ready to prove Theorem $1.1$

\begin{proof}[Proof of Theorem 1.1]

Because $X$ has countable Szlenk index, $X^*$ must be separable.  By a theorem of Zippin \cite{Z}, $X$ embeds into a space $Z$ with shrinking, bimonotone FDD $E$.  By renorming $X$ with an equivalent norm, we can assume $X$ is isometrically a subspace of $Z$.  Fix $C>2$ and take $M=(m_n)_{n\geq 0}$ and $\overline{\delta}$ given in Theorem $5.5$, and let $H$ be the corresponding blocking.    

Take a normalized, weakly null even tree $(x_t)_{t\in T^\text{even}_\infty}$.  Put $s_0=1$, $k_1=1$.  Next, assume $s_0<s_1<\ldots<s_{\ell-1}$ and $n_1<\ldots< n_{2\ell-1}$ have been chosen so that $$\|P^H_{[s_{n-1},s_n)}x_{(k_1, \ldots, k_{2n})}-x_{(k_1, \ldots, k_{2n})}\|<\delta_n$$ for each $n< \ell$.  

Because nodes are weakly null, there exists $k_{2\ell}>k_{2\ell-1}$ so that $$\|P^H_{[1,s_{\ell-1})}x_{(k_1, \ldots, k_{2\ell})}\|<\delta_{\ell}.$$

Next, choose $s_{\ell}>s_{\ell-1}$ so that $$\|P^H_{[s_{\ell-1}, s_{\ell})}x_{(k_1, \ldots, k_{2\ell})}-x_{(k_1, \ldots, k_{2\ell})}\|<\delta_{\ell}.$$

Finally, choose $k_{2\ell+1}>\max \{m_{s_{\ell}}, k_{2\ell}\}$.  

We deduce that $(k_{2n-1},x_{(k_1, \ldots, k_{2n})})_{n=1}^\infty$ is $C$ dominated by $(e_{m_{s_{n-1}}})$.  Since $m_{s_{n-1}}<k_{2n-1}$ and the Schreier spaces are $1$-right dominant, the branch $(k_{2n-1},x_{(k_1, \ldots, k_{2n})})$ is $C$ dominated by $(e_{k_{2n-1}})$.  Thus $X$ has subsequential $X_\alpha$-upper tree estimates.   

\end{proof}

The following corollary proves Theorem $1.2$ and Corollary $1.3$.  

\begin{corollary} Let $\alpha$ be a countable ordinal.  There exists a Banach space $Z$ with bimonotone, shrinking FDD $E$ which satisfies subsequential $X_\alpha$-upper block estimates in $Z$ which is universal for the class $\bigl \{X: Sz(X)\leq \omega^\alpha\bigr\}$.  Moreover, there exists a Banach space $W$ with a basis such that $Sz(W)\leq \omega^{\alpha+1}$ which is also universal for this class.  \end{corollary}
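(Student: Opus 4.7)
The plan is to combine Theorem 5.4 applied with $V = X_\alpha$ together with Theorem 1.1, and then to invoke the embedding results of Johnson–Rosenthal–Zippin and Odell–Schlumprecht–Zs\'ak mentioned in the introduction to pass from an FDD to a basis.

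First I will verify that $X_\alpha$ satisfies the hypotheses of Theorem 5.4. Its canonical basis $(e_n)$ is normalized and $1$-unconditional by construction, and it is $1$-right dominant because the Schreier families $S_\alpha$ are spreading (both facts are recorded in Section 3). Proposition 3.1 furnishes subsequential $2$-$X_\alpha$-upper block estimates of $(e_n)$ in $X_\alpha$ itself. That $(e_n)$ is shrinking follows from the observation that $X_\alpha$ contains no isomorphic copy of $\ell_1$, since $I_b(X_\alpha,(e_n)) = \omega^{\alpha+1} < \omega_1$, combined with $1$-unconditionality of the basis; this is exactly the argument used at the outset of the proof of Proposition 4.6.

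With these hypotheses in hand, Theorem 5.4 produces a Banach space $Z$ with shrinking, bimonotone FDD $E = (E_n)$ satisfying subsequential $X_\alpha$-upper block estimates which is universal for the class $\mathcal{A}_{X_\alpha}$. By Theorem 1.1, any separable Banach space $X$ with $Sz(X) \leq \omega^\alpha$ satisfies subsequential $X_\alpha$-upper tree estimates, hence lies in $\mathcal{A}_{X_\alpha}$, and therefore embeds into $Z$. Proposition 4.6 gives $Sz(X_\alpha) = \omega^{\alpha+1}$, and Corollary 4.5 then forces $Sz(Z) \leq \omega^{\alpha+1}$, yielding the first half of the corollary (Theorem 1.2).

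For the second half (Corollary 1.3), I will apply the Johnson–Rosenthal–Zippin theorem to embed $Z$ into a Banach space $W$ with a shrinking basis, and invoke the cited result of Odell, Schlumprecht, and Zs\'ak to control the Szlenk index so that $Sz(W) \leq \omega^{\alpha+1}$. Any separable $X$ with $Sz(X) \leq \omega^\alpha$ embeds into $Z$ and hence into $W$, providing the universal space with a basis. The only substantive point beyond assembling previously established ingredients is this final Szlenk control when converting an FDD into a basis, which the OSZ result is precisely designed to supply.
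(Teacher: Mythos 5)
Your proposal follows the same route as the paper: verify the hypotheses of Theorem 5.4 for $V=X_\alpha$ and apply it to get the universal $Z$, feed Theorem 1.1 through the definition of $\mathcal{A}_{X_\alpha}$ to see every $X$ with $Sz(X)\leq\omega^\alpha$ embeds into $Z$, bound $Sz(Z)$ via Corollary 4.5 and Proposition 4.6, and then pass to a space with a basis using Johnson--Rosenthal--Zippin and Odell--Schlumprecht--Zs\'ak. The only spot to tighten is the last step, where the paper is more specific than your sketch: JRZ produces $W = Z \oplus D$ with $D=\bigl(\bigoplus_n H_n\bigr)_2$ having a basis, one checks $Sz(D)\leq\omega$ because the $(H_n)$ satisfy $\ell_2$-upper block estimates, and then the OSZ direct-sum formula $Sz(W)=\max\{Sz(Z),Sz(D)\}\leq\omega^{\alpha+1}$ finishes the argument.
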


\begin{proof}
Let $Z$ be the universal space for the class $\mathcal{A}_{X_\alpha}$ guaranteed by Theorem $5.4$, and let $E$ be its FDD.  From the proof of Theorem $5.4$, we see that $E$ satisfies subsequential $X_\alpha$-upper block estimates in $Z$.  If $X$ is a Banach space such that $Sz(X)\leq \omega^{\alpha}$, then $X^*$ is separable \cite{SZLENK}.  By Corollary $5.6$, $X$ satisfies subsequential $X_\alpha$-upper tree estimates.  By the definition of $\mathcal{A}_{X_\alpha}$ and choice of $Z$, $X$ embeds into $Z$.   By Corollary $4.5$ and Proposition $4.6$, $$Sz(Z)\leq Sz(X_\alpha)=\omega^{\alpha+1}.$$  By \cite[Corollary 4.12]{JRZ}, there exists a sequence of finite dimensional spaces $(H_n)$ so that if $D=\Bigl(\underset{n=1}{\overset{\infty}{\oplus}} H_n\Bigr)_2$, then $W=Z\oplus D$ has a basis.  Since the FDD $(H_n)$ satisfies $\ell_2$-upper block estimates in $D$, $Sz(D)\leq \omega$ \cite[Theorem 3]{OS}.  By \cite[Proposition 14]{OSZ2}, $$Sz(W)= \max \{Sz(Z), Sz(D)\}\leq \omega^{\alpha+1}.$$

\end{proof}

\begin{remark}
We summarize what we have shown.  We have established that if $\alpha<\omega_1$, then $$\bigl\{X:Sz(X)\leq \omega^\alpha\bigr\}\subsetneq \mathcal{A}_{X_\alpha}\subset \bigl\{X: Sz(X)\leq \omega^{\alpha+1}\bigr\}.$$

The first inclusion comes from Corollary $5.6$.  The strict inclusion comes by noting that $X_\alpha$ satisfies subsequential $X_\alpha$-upper block estimates but has Szlenk index $\omega^{\alpha+1}$.  The second inclusion is a consequence of Corollary $4.5$ and Proposition $4.6$.  

\end{remark}

\section{Applications}

 \begin{definition}
For Banach spaces $X,Y$, we consider $X\otimes Y$ as a space of bounded operators from $Y^*$ into $X$, endowed with the topology induced by the operator norm.  For each expression $\displaystyle\sum_{n=1}^\ell x_n\otimes y_n$, we define $$\Bigl(\displaystyle\sum_{n=1}^\ell x_n\otimes y_n\Bigr)(y^*)=\displaystyle\sum_{n=1}^\ell y^*(y_n)x_n\text{\ \ \ \  } x_n\in X, y_n\in Y, y^*\in Y^*.$$

We denote by $X\otimes Y$ the space of equivalence classes of all such expressions, where two expressions are equivalent if they determine the same operator.

\end{definition}

Since such operators are finite rank, they are compact.  Thus the completion of the injective product, denoted $X\hat{\otimes}_\epsilon Y$, must be contained within the compact operators, $K(Y^*,X)$.  

It is easy to verify that $\Bigl(\displaystyle\sum_{n=1}^\ell x_n\otimes y_n\Bigr)^* = \displaystyle\sum_{n=1}^\ell y_n\otimes x_n\in Y\otimes _\epsilon X$.  Thus, via adjoints, $X\otimes _\epsilon Y$ is isometrically isomorphic to $Y\otimes _\epsilon X$, and the same is true of the completions.  This means that if $u\in X\hat{\otimes}_\epsilon Y$, $u^*\in K(X^*, Y)$.  

\begin{definition}

A Banach space $X$ is said to have the \emph{approximation property} if, for any $C\subset X$ compact and $\varepsilon>0$, there exists a bounded, finite rank operator $T:X\to X$ such that $\|Tx-x\|<\varepsilon$ for all $x\in C$.  

\end{definition}

If either $X$ or $Y$ has the approximation property, any element $u\in K(X,Y)$ is the limit of bounded, finite rank operators.  Since any space with an FDD has the approximation property, if $E$ has FDD $(E_i)$ and $u\in K(X,E)$, for some Banach space $X$, $P_n^E u\to u$ in norm.  
\vspace{10mm}

\begin{proposition}
Let $V$ be a Banach space with normalized, $1$-unconditional basis $(e_n)$.  Let $X,E$ be Banach spaces, $E$ with FDD $(E_n)$ satisfying subsequential $C$-$V$-upper block estimates.  Let $u_n:X\to E$ be bounded operators and $1=k_0<k_1\ldots<k_\ell$ natural numbers such that $u_n(X)\subset \underset{j=k_{n-1}}{\overset{k_n-1}{\bigoplus}}E_j$.  Then  $$\Bigl\|\displaystyle\sum_{n=1}^\ell u_n\Bigr\|\leq C\Bigl\|\displaystyle\sum_{n=1}^\ell \|u_n\|e_{k_{n-1}}\Bigr\|.$$

\end{proposition}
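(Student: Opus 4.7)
The plan is to reduce the operator-norm estimate to a block-sequence estimate obtained by evaluating $\sum_n u_n$ at an arbitrary unit vector of $X$, then applying the subsequential $C$-$V$-upper block estimate via Proposition 2.4. The key observation is that the hypothesis $u_n(X)\subset \bigoplus_{j=k_{n-1}}^{k_n-1}E_j$ means that for any $x\in X$ the vectors $u_1(x),\ldots,u_\ell(x)$ are disjointly supported with respect to $(E_n)$, so after normalization they form a finite normalized block sequence in $E$.

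Concretely, I would fix $x\in B_X$ and set $N=\{n:u_n(x)\neq 0\}$ and $z_n=u_n(x)/\|u_n(x)\|$ for $n\in N$. Then $(z_n)_{n\in N}$ is a normalized block sequence of $(E_n)$, and from the support condition we have, for consecutive indices $n<n'$ in $N$,
\[
\max\mathrm{supp}_E\, z_n\leq k_n-1 < k_n \leq k_{n'-1}\leq \min\mathrm{supp}_E\, z_{n'}.
\]
Thus Proposition 2.4 (applied to the subsequence $(k_{n-1})_{n\in N}$) yields that $(z_n)_{n\in N}$ is $C$-dominated by $(e_{k_{n-1}})_{n\in N}$, so
\[
\Bigl\|\sum_{n=1}^\ell u_n(x)\Bigr\| = \Bigl\|\sum_{n\in N}\|u_n(x)\|\,z_n\Bigr\|_E \leq C\Bigl\|\sum_{n\in N}\|u_n(x)\|\,e_{k_{n-1}}\Bigr\|_V.
\]

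To finish, I would use $1$-unconditionality of $(e_n)$ together with the trivial bounds $\|u_n(x)\|\leq \|u_n\|\|x\|\leq \|u_n\|$ for $n\in N$ and $\|u_n(x)\|=0\leq \|u_n\|$ otherwise; this gives
\[
\Bigl\|\sum_{n\in N}\|u_n(x)\|\,e_{k_{n-1}}\Bigr\|_V \leq \Bigl\|\sum_{n=1}^\ell \|u_n\|\,e_{k_{n-1}}\Bigr\|_V,
\]
and taking the supremum over $x\in B_X$ yields the claim.

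I do not expect any substantial obstacle here: the only mild subtlety is verifying that the indexing hypothesis of Proposition 2.4 is met when some of the $u_n(x)$ vanish (handled by simply deleting those indices from $N$, since the remaining block sequence still satisfies the spacing inequality with the correspondingly sub-indexed $(k_{n-1})_{n\in N}$), and keeping track of the fact that $(e_n)$ being $1$-unconditional allows us to replace $\|u_n(x)\|$ by $\|u_n\|$ coordinatewise without loss.
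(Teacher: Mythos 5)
Your proof is correct and follows essentially the same route as the paper: fix $x\in B_X$, observe $(u_n(x))_{n\in N}$ is a block sequence, normalize, apply Proposition 2.4 with the indices $(k_{n-1})_{n\in N}$, use $1$-unconditionality together with $\|u_n(x)\|\le\|u_n\|$, and take the supremum over $B_X$. You just spell out the indexing check for Proposition 2.4 and the handling of vanishing $u_n(x)$ a bit more explicitly than the paper does.
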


\begin{proof} 

Let $u=\displaystyle\sum_{n=1}^\ell u_n$.  Take $x\in B_X$.  Let $N=\{n\leq \ell: u_n(x)\neq 0\}$.  If this set is empty, then $u(x)=0$.  Otherwise, $(u_n(x))_{n\in N}$ is a block sequence in $E$.  Let $m_n=\min \text{supp\ }u_n(x)$.  By Proposition $2.4$, we get that $$\|u(x)\|_Z\leq C\Bigl\|\displaystyle\sum_{n\in N} \|u_n(x)\|_Ze_{k_{n-1}}\Bigr\|_V\leq \Bigl\|\displaystyle\sum_{n=1}^\ell \|u_n\|e_{k_{n-1}}\Bigr\|.$$

Since this holds for any $x\in B_X$, we get the result.  

\end{proof}

\begin{definition}

Let $E,F$ be Banach spaces with shrinking, bimonotone FDDs $(E_n), (F_n)$.  Then let $$H_n=\text{span}(E_i\otimes_\epsilon F_j: \max\{i,j\}=n).$$

We call this \emph{the square blocking}.  \end{definition}

\begin{proposition} If $W,Z$ are Banach spaces with FDDs $(E_n), (F_n)$, then $(H_n)$ is an FDD for $W\hat{\otimes}_\epsilon Z$.  If $(E_n),(F_n)$ are shrinking, so is $(H_n)$.  \end{proposition}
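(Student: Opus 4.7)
The natural candidate for the partial-sum projection onto $\bigoplus_{k=1}^N H_k$ is $Q_N := P^E_{[1,N]}\otimes P^F_{[1,N]}$, which acts on $u\in W\hat{\otimes}_\epsilon Z\subset K(Z^*,W)$ by $Q_N u = P^E_{[1,N]}\circ u\circ(P^F_{[1,N]})^*$. Its range is exactly $\bigoplus_{k=1}^N H_k$, and its operator norm is bounded by $K(E,W)\cdot K(F,Z)$ uniformly in $N$, so uniform boundedness of the partial-sum projections is automatic. The only nontrivial step in verifying that $(H_n)$ is an FDD is thus the norm convergence $Q_N u\to u$ for each $u\in W\hat{\otimes}_\epsilon Z$; uniqueness of the representation $u=\sum h_n$ then follows from $h_n=(Q_n-Q_{n-1})u$.

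To prove that convergence I would use the telescoping identity
$$ u-Q_N u \;=\; (I-P^E_{[1,N]})u \;+\; P^E_{[1,N]}\,u\,\bigl(I-(P^F_{[1,N]})^*\bigr). $$
The first summand tends to $0$ in operator norm by the observation immediately preceding the proposition, since $u$ is compact and $W$ enjoys the approximation property through its FDD $(E_n)$. For the second summand, exploit the natural isometric isomorphism $W\hat{\otimes}_\epsilon Z\cong Z\hat{\otimes}_\epsilon W$ given by $u\mapsto u^*$: applying the same observation to $u^*\in K(W^*,Z)$ and the FDD $(F_n)$ of $Z$ gives $P^F_{[1,N]}u^*\to u^*$ in norm, and transporting back under the isomorphism yields $u\,(I-(P^F_{[1,N]})^*)\to 0$ in norm.

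For the shrinking assertion, assume $(E_n)$ and $(F_n)$ are shrinking and let $(u_n)$ be a bounded block sequence with respect to $(H_n)$, say $u_n\in\bigoplus_{K_{n-1}<k\le K_n}H_k$; since weak nullity of every bounded block sequence is equivalent to shrinking of the FDD, it suffices to show $u_n\to 0$ weakly in $W\hat{\otimes}_\epsilon Z$. Split $u_n=a_n+b_n$ where
$$ a_n = P^E_{(K_{n-1},K_n]}\,u_n\,(P^F_{[1,K_n]})^*,\qquad b_n = P^E_{[1,K_{n-1}]}\,u_n\,(P^F_{(K_{n-1},K_n]})^*. $$
Shrinking of $(E_n)$ and $(F_n)$ delivers $\|(P^E_{(K_{n-1},K_n]})^*w^*\|\to 0$ and $\|(P^F_{(K_{n-1},K_n]})^*z^*\|\to 0$ for every $w^*\in W^*$, $z^*\in Z^*$, whence $w^*(a_n(z^*))\to 0$ and $w^*(b_n(z^*))\to 0$ pointwise. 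The main obstacle is to upgrade these per-variable vanishings to honest weak convergence in $W\hat{\otimes}_\epsilon Z$; I would do this via Grothendieck's integral representation of the dual, writing each $\phi\in(W\hat{\otimes}_\epsilon Z)^*$ in the form $\phi(u)=\int w^*(u(z^*))\,d\mu(w^*,z^*)$ for some regular Borel measure $\mu$ on the compact product $B_{W^*}\times B_{Z^*}$ equipped with the weak$^*$ topologies. The uniform domination $|w^*(u_n(z^*))|\le\|u_n\|$, together with the pointwise limits just established, then activates dominated convergence to conclude $\phi(u_n)\to 0$, which finishes the proof.
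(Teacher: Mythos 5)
Your proof of the FDD assertion follows essentially the same route as the paper: you take $Q_N u = P^E_{[1,N]}\,u\,(P^F_{[1,N]})^*$, observe uniform boundedness of these projections, and deduce $Q_N u\to u$ in norm by telescoping and using the compactness of $u$ and $u^*$ together with the approximation property coming from the FDDs of $W$ and $Z$. Uniqueness via $h_n = (Q_n - Q_{n-1})u$ is the same observation the paper makes. No substantive difference there.

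For the shrinking assertion, you take a genuinely different route from the paper. The paper reduces to showing that sequences $(x_n)$ in the ball with $P^H_{[1,n]}x_n = 0$ are weakly null, and then quotes a lemma of Lewis (\emph{Conditional weak compactness in certain inductive tensor products}, Lemma 1.1) which says that a bounded sequence in an injective tensor product is weakly null if and only if it is annihilated in the limit by every elementary tensor functional $g^*\otimes f^*$; from there it splits each $x_n$ into two pieces as you do and checks each piece against elementary tensors. You instead re-derive the needed fact from scratch via Grothendieck's integral representation of $(W\hat{\otimes}_\epsilon Z)^*$ as integral bilinear forms $\phi(u) = \int_{B_{W^*}\times B_{Z^*}} w^*(u(z^*))\,d\mu(w^*,z^*)$, and then use dominated convergence: your split $u_n = a_n + b_n$, the pointwise vanishing of $w^*(a_n(z^*))$ and $w^*(b_n(z^*))$ (driven by the shrinking of $(E_n)$ and $(F_n)$, respectively), and the uniform bound $|w^*(u_n(z^*))|\le \sup_n \|u_n\|$ deliver $\phi(u_n)\to 0$. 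This is correct. What it buys: it is self-contained modulo Grothendieck's theorem and makes visible exactly why elementary-tensor testing suffices (dominated convergence against a measure on the product of dual balls). What the paper's citation buys: brevity, and avoidance of invoking the full integral representation. Both the decompositions and the use of shrinking-ness of $(E_n)$, $(F_n)$ are the same underlying computation, just packaged differently at the duality step.
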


\begin{proof}

Let $P_A=P^E_A$ and $Q_A=P^F_A$ denote the projections in $E,F$, respectively.  Then $P^H_n:E\hat{\otimes}_\epsilon F\to H_n$ is defined by $P^H_n(u)=P_{[1,n]}uQ_{[1,n]}^*-P_{[1,n)}uQ_{[1,n)}^*$, where $P_\varnothing=Q_\varnothing=0$.  This means $P^H_{[1,n]}:E\hat{\otimes}_\epsilon F\to \underset{i=1}{\overset{n}{\bigoplus}}H_i$ is given by $P^H_{[1,n]}(u)=P_{[1,n]}uQ_{[1,n]}^*$.  

Since $u:F^*\to E$ is compact, $P_{[1,n]}u\underset{n\to\infty}{\to} u$ in norm.  Moreover, since $u^*:E^*\to F$ is compact, $Q_{[1,n]}u^*\underset{n\to\infty}{\to} u^*$.  But this means that $uQ_{[1,n]}^*\underset{n\to\infty}{\to} u$.  So \begin{align*} \|u-P_{[1,n]}uQ_{[1,n]}^*\| & \leq \|u-P_{[1,n]}u\|+\|P_{[1,n]}u-P_{[1,n]}uQ_{[1,n]}^*\| \\ & \leq \|u-P_{[1,n]}u\|+\|u-uQ_{[1,n]}^*\|\to 0.\end{align*}

Thus $P^H_{[1,n]}(u)\to u$.  Moreover, if $u_m\in H_m$ then $P_nu_mQ_n^*-P_{n-1}u_mQ_{n-1}^*=\delta_{mn}u_m$.  Thus if $u=\displaystyle\sum_{m=1}^\infty u_m$, $u_m\in H_m$, then $u_n=P_nuQ_n^*$, and we have uniqueness.  So $(H_n)$ is an FDD of $E\hat{\otimes}_\epsilon F$.  

\vspace{5mm}

The FDD $(H_n)$ is shrinking if any sequence $(x_n)\subset B_{E\hat{\otimes}_\epsilon F}$ such that $P^H_{[1,n]}x_n=P_{[1,n]}x_nQ_{[1,n]}^*=0$ is weakly null.  A sequence $(x_n)\subset E\hat{\otimes}_\epsilon F$ is weakly null if and only if for any $g^*\in E^*$ and $f^*\in F^*$, $g^*\otimes f^*(x_n)\to 0$ (Lemma 1.1, [L]).  Here, $g^*\otimes f^*(x)=g^*(x(f^*))$.    

But $$x_n=P_{[1,n]} x_nQ_{[1,n]}^*+P_{[1,n]}x_nQ_{(n,\infty)}^*+P_{(n,\infty)}x_n$$ $$=P_{[1,n]}x_nQ_{(n,\infty)}^*+P_{(n,\infty)}x_n.$$

Take $f^*\in F^*$ and $g^*\in E$, $$g^*\Bigl(P_{[1,n]}x_nQ_{(n,\infty)}^*f^*\Bigr)\leq \|g^*\|\|Q_{(n,\infty)}^*f^*\|\to 0$$

because $(F_n)$ is shrinking.  Thus $P_{[1,n]}x_nQ^*_{(n,\infty)}f^*$ is weakly null in $E$, and $P_{[1,n]}x_nQ^*_{(n,\infty)}$ is weakly null in $E\hat{\otimes}_\epsilon F$.  A similar argument shows that $P_{(n,\infty)}x_n$ is weakly null.  Thus $x_n= P_{[1,n]}x_nQ_{(n,\infty)}^*+P_{(n,\infty)}x_n$ is weakly null.  This means that $(H_n)$ is a shrinking FDD.  

\end{proof}

\begin{lemma}
Let $V$ be a Banach space with normalized, $1$-unconditional basis $(v_n)$.  Let $W,Z$ be Banach spaces with shrinking, bimonotone FDDs $(E_n)$, $(F_n)$ satisfying subsequential $C$-$V$-upper block estimates.  Then $W\hat{\otimes}_\epsilon Z$ with FDD $(H_n)$ satisfies subsequential $2C$-$V$-upper block estimates.

\end{lemma}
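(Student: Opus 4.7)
The plan is to write each block $u_k$ in the sequence as a sum $u_k=\alpha_k+\beta_k$, where $\beta_k$ has range contained in a single block of $(E_n)$ in $W$ and $\alpha_k^*$ has range contained in a single block of $(F_n)$ in $Z$. One then applies the subsequential $C$-$V$-upper block estimate once in each factor and combines the two bounds by the triangle inequality. The $\alpha_k$ term is handled by passing to its adjoint and using the isometric identification $W\hat{\otimes}_\epsilon Z\cong Z\hat{\otimes}_\epsilon W$ recalled at the start of Section 6.

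Write $P_A=P^E_A$ and $Q_A=P^F_A$ for the coordinate projections on $W$ and $Z$. Given a normalized block sequence $(u_k)$ of $(H_n)$, set $a_k=\min\text{supp}_H u_k$ and $b_k=\max\text{supp}_H u_k+1$, so $a_k<b_k\leq a_{k+1}$. From the identity $P^H_{[1,n]}u=P_{[1,n]}uQ^*_{[1,n]}$ derived in the proof of Proposition 6.5, the conditions $u_k\in\bigoplus_{n\in[a_k,b_k)}H_n$ and $P^H_{[1,a_k-1]}u_k=0$ yield $u_k=P_{[1,b_k-1]}u_kQ^*_{[1,b_k-1]}$ and $P_{[1,a_k-1]}u_kQ^*_{[1,a_k-1]}=0$; moreover, because $H_n$ is spanned by $E_i\otimes F_j$ with $\max(i,j)=n\leq b_k-1$, one also has $P_{[b_k,\infty)}u_k=0$ and $u_kQ^*_{[b_k,\infty)}=0$. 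Expanding both $P_{[1,b_k-1]}$ and $Q^*_{[1,b_k-1]}$ across the break at $a_k$ and discarding the vanishing corner $P_{[1,a_k-1]}u_kQ^*_{[1,a_k-1]}$ would leave
\[ u_k=\underbrace{P_{[1,a_k-1]}u_kQ^*_{[a_k,b_k-1]}}_{\alpha_k}+\underbrace{P_{[a_k,b_k-1]}u_k}_{\beta_k}, \]
with range of $\beta_k$ contained in $\bigoplus_{i\in[a_k,b_k)}E_i$ and range of $\alpha_k^*$ contained in $\bigoplus_{j\in[a_k,b_k)}F_j$. Bimonotonicity of $(E_n)$ and $(F_n)$ immediately gives $\|\alpha_k\|,\|\beta_k\|\leq\|u_k\|=1$.

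For scalars $(c_k)\in c_{00}$ and any $z^*\in B_{Z^*}$, the vectors $(\beta_k(z^*))$ form a finitely supported block sequence of $(E_n)$ with $\max\text{supp}_E\beta_k(z^*)<b_k\leq a_{k+1}\leq\min\text{supp}_E\beta_{k+1}(z^*)$. Applying Proposition 2.4 to the normalizations of the nonzero $\beta_k(z^*)$'s (using the $C$-$V$-upper block estimate for $(E_n)$ in $W$), then using $1$-unconditionality of $(v_n)$ and $\|\beta_k(z^*)\|\leq\|\beta_k\|\leq 1$, yields $\|\sum c_k\beta_k(z^*)\|\leq C\|\sum|c_k|v_{a_k}\|$; taking the supremum over $z^*$ gives $\|\sum c_k\beta_k\|\leq C\|\sum|c_k|v_{a_k}\|$. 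The identical argument applied to $(\alpha_k^*)$ in $Z$, using the $C$-$V$-upper block estimate for $(F_n)$, gives $\|\sum c_k\alpha_k^*\|\leq C\|\sum|c_k|v_{a_k}\|$, and $\|\sum c_k\alpha_k\|=\|\sum c_k\alpha_k^*\|$ by the adjoint isometry. The triangle inequality then produces $\|\sum c_ku_k\|\leq 2C\|\sum c_kv_{a_k}\|$, which is precisely the claimed $2C$-$V$-upper block estimate.

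The main conceptual point, and the source of the factor of $2$, is the row–column splitting in the decomposition $u_k=\alpha_k+\beta_k$: elements of $\bigoplus_{n\in[a_k,b_k)}H_n$ occupy the ``L-shaped'' frame $[1,b_k-1]^2\setminus[1,a_k-1]^2$ in the $E\otimes F$ bi-index, and recovering a one-coordinate block structure on each summand forces one to cut this frame into a row strip ($\beta_k$, blocked in the $E$-direction) and the remaining column strip ($\alpha_k$, blocked in the $F$-direction after passing to the adjoint). Once this decomposition and its bimonotone norm bounds are in place, the rest is a mechanical one-factor application of Proposition 2.4 on each side together with the adjoint identification.
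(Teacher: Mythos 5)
Your proof is correct and follows essentially the same strategy as the paper's: split each $u_k$ into a piece blocked in the $E$-direction and a piece whose adjoint is blocked in the $F$-direction, get $C$-bounds for each from the one-sided block estimates, and add. The only cosmetic differences are that you assign the ``corner'' square $P_{[a_k,b_k-1]}u_kQ^*_{[a_k,b_k-1]}$ to the $E$-blocked summand whereas the paper puts it in the $F$-blocked one (a harmless reflection), and that you re-derive the operator-norm inequality from Proposition 2.4 by evaluating at $z^*\in B_{Z^*}$ and taking a supremum, whereas the paper isolates that step as Proposition 6.3 and invokes it.
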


\begin{proof}

Take a normalized sequence $(u_n)$ in $W\hat{\otimes}_\epsilon Z$ which is a block sequence with respect to $(H_n)$.  Let $m_n=\min \text{supp\ } u_n$.  Then $u_n=P^E_{[1,m_{n+1})}u_nP^{F^*}_{[1,m_{n+1})}$ and $0=P^E_{[1,m_n)}u_nP^{F^*}_{[1,m_n)}$.  Let $$a_n=P^E_{[m_n,m_{n+1})}u_nP^{F^*}_{[1,m_n)},$$ $$b_1=0,$$ and $$b_n=u_nP^{F^*}_{[m_n,m_{n+1})}.$$  

By construction, $a_n+b_n=u_n$ for all $n$.  The bimonotonicity of the FDDs gives that $\|a_n\|, \|b_n\|\leq 1$ for each $n$.  Let $N_1=\{n: a_n\neq 0\}$, $N_2=\{n:b_n\neq 0\}$.  We note that for $n\in N_2$, the adjoint of $b_n$ satisfies $b_n^*=P^F_{[m_n,m_{n+1})}u_n^*\neq 0$.  Moreover, $(a_n)_{n\in N_1}$, $(b_n^*)_{n\in N_2}$ satisfy the hypotheses of Proposition $6.1$ as operators from $Z^*$ to $W$ and from $W^*$ to $Z$, respectively.  This means that for any $(c_n)\subset \mathbb{R}$, $$\Bigl\|\displaystyle\sum_{n\in N_1}c_na_n\Bigr\|\leq C\Bigl\|\displaystyle\sum_{n\in N_1}c_n \|a_n\|v_{m_n}\Bigr\|\leq C\Bigl\|\displaystyle\sum_{n=1}^\infty a_n v_{m_n}\Bigr\|.$$

Similarly, $$\Bigl\|\displaystyle\sum_{n\in N_2} c_nb_n\Bigr\|=\Bigl\|\displaystyle\sum_{n\in N_2}c_n b^*_n\Bigr\|\leq C\Bigl\|\displaystyle\sum_{n=1}^\infty a_n v_{m_n}\Bigr\|$$

Then $$\Bigl\|\displaystyle\sum_{n=1}^\infty c_n u_n\Bigr\|\leq \Bigl\|\displaystyle\sum_{n\in N_1} c_n a_n\Bigr\|+\Bigl\|\displaystyle\sum_{n\in N_2} c_nb_n\Bigr\|\leq 2C \Bigl\|\displaystyle\sum_{n=1}^\infty c_n v_{m_n}\Bigr\|.$$

\end{proof}

\begin{theorem}

Let $X,Y$ be nonzero Banach spaces with seprable duals.  If either space has finite dimension, then $Sz(X \hat{\otimes}_\epsilon Y)=\max \{Sz(X), Sz(Y)\}$.  Otherwise, let $\beta<\omega_1$ be such that $\max \{Sz(X),Sz(Y)\}=\omega^\beta$.  Then $$Sz(X\hat{\otimes}_\epsilon Y)\leq \omega^{\beta+1}.$$  If $\beta=1$ or $\beta=\alpha \omega$ for some $\alpha<\omega_1$, then $Sz(X\hat{\otimes}_\epsilon Y)=\omega^\beta$.  

\end{theorem}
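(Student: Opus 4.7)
The plan is to treat the three assertions of the theorem separately, leveraging the embedding technology of Theorem~5.1 together with the tensor-product construction of Section~6, and then to bootstrap the general upper bound to equality in the two special cases using sharper universal-space inputs.

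For the finite-dimensional case, say $\dim Y=n<\infty$, one has $X\hat\otimes_\epsilon Y\cong \ell_\infty^n(X)$ under the natural identification, and the Szlenk index is stable under finite $\ell_\infty$-sums. Since $Sz(Y)$ is finite (hence dominated by $Sz(X)$ when $X$ is infinite-dimensional), this gives $Sz(X\hat\otimes_\epsilon Y)=Sz(X)=\max\{Sz(X),Sz(Y)\}$. The symmetric case when $X$ is finite-dimensional is identical via Definition~6.1, and the case where both are finite-dimensional is trivial.

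For the general upper bound $Sz(X\hat\otimes_\epsilon Y)\le\omega^{\beta+1}$, I would proceed as follows. Since $Sz(X),Sz(Y)\le\omega^\beta<\omega_1$, both duals are separable. By Theorem~1.1 both $X$ and $Y$ satisfy subsequential $X_\beta$-upper tree estimates. The canonical basis of $X_\beta$ is normalized, $1$-unconditional, $1$-right dominant (since $S_\beta$ is spreading), satisfies subsequential $X_\beta$-upper block estimates in itself by Proposition~3.1, and is shrinking (since $X_\beta$ is $c_0$-saturated, it contains no $\ell_1$, and a $1$-unconditional basis in a space not containing $\ell_1$ is shrinking). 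Hence Theorem~5.1 applies and produces Banach spaces $W$ and $Z$ with shrinking, bimonotone FDDs $E=(E_n)$ and $F=(F_n)$ satisfying subsequential $X_\beta$-upper block estimates, into which $X$ and $Y$ embed isomorphically. The $\epsilon$-tensor product respects subspaces, so there is an isomorphic embedding $X\hat\otimes_\epsilon Y\hookrightarrow W\hat\otimes_\epsilon Z$. By Proposition~6.5 the square blocking $(H_n)$ is a shrinking FDD for $W\hat\otimes_\epsilon Z$, and by Lemma~6.6 it satisfies subsequential $X_\beta$-upper block estimates. Corollary~4.5 together with Proposition~4.6 then yields $Sz(W\hat\otimes_\epsilon Z)\le Sz(X_\beta)=\omega^{\beta+1}$, and monotonicity of $Sz$ under subspaces gives $Sz(X\hat\otimes_\epsilon Y)\le\omega^{\beta+1}$.

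For the matching lower bound used in the equality cases, fix norm-one $x_0\in X$, $y_0\in Y$ and Hahn--Banach functionals $x_0^*\in X^*$, $y_0^*\in Y^*$ norming them. The maps $x\mapsto x\otimes y_0$ and $y\mapsto x_0\otimes y$ are isometric embeddings, so $Sz(X\hat\otimes_\epsilon Y)\ge\max\{Sz(X),Sz(Y)\}=\omega^\beta$. Since by Theorem~4.4 every Szlenk index of a space with separable dual is of the form $\omega^\gamma$, the two bounds force $Sz(X\hat\otimes_\epsilon Y)\in\{\omega^\beta,\omega^{\beta+1}\}$, so the equality claim in the special cases reduces to improving the upper bound by one step. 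The strategy is to replace $X_\beta$ throughout the argument of the previous paragraph by a more economical basis $V$ with $Sz(V)=\omega^\beta$ which still serves as a universal upper envelope for spaces of Szlenk index $\le\omega^\beta$. For $\beta=1$ one takes $V=c_0$ and invokes the characterization of $Sz\le\omega$ via asymptotic $c_0$ structure (from which $X$ and $Y$ embed into spaces with shrinking FDD satisfying subsequential $c_0$-upper block estimates); Lemma~6.6 with $V=c_0$ and Corollary~4.5 then give $Sz(X\hat\otimes_\epsilon Y)\le Sz(c_0)=\omega$. For $\beta=\alpha\omega$ one takes $V$ to be the Tsirelson-type space, constructed in \cite{FOSZ} and used there to obtain the $\alpha=\beta\omega$ case of Corollary~1.3, whose basis is normalized, $1$-unconditional, shrinking, right-dominant, satisfies $V$-upper block estimates in itself, and has $Sz(V)=\omega^{\alpha\omega}$; again Theorem~5.1, Lemma~6.6, and Corollary~4.5 combine to yield $Sz(X\hat\otimes_\epsilon Y)\le\omega^{\alpha\omega}$. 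The main obstacle is purely the external input in these two special cases: identifying and verifying the required properties of the alternative space $V$. The tensor-product portion of the argument is uniform across all three cases, since Lemma~6.6 and the shrinkingness result of Proposition~6.5 hold for any $V$ with normalized $1$-unconditional basis.
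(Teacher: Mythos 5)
Your argument follows the paper's proof almost exactly: finite-dimensional case via $\ell_\infty^n$-sums and \cite[Proposition 14]{OSZ2}, the general upper bound via Theorem~1.1, Theorem~5.1, the square-blocking machinery of Section~6 (Proposition~6.5, Lemma~6.6), and Corollary~4.5, and the special cases by swapping in a different universal domination space $V$ with $Sz(V)=\omega^\beta$. The one real gap is in the $\beta=1$ case: you set $V=c_0$ and assert that $Sz(X)\le\omega$ forces $X$ to embed into a space with shrinking FDD satisfying subsequential $c_0$-upper block estimates. That claim is false — $\ell_2$ has $Sz(\ell_2)=\omega$ but its unit vector basis is not $c_0$-dominated, so $\ell_2$ satisfies no form of $c_0$-upper estimate. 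What \cite[Theorem 3]{OS} actually gives is that $Sz(X)\le\omega$ implies subsequential $\ell_q$-upper tree estimates for some $q>1$, and one should take $V=\ell_q$ (as the paper does). Since $Sz(\ell_q)=\omega$, the conclusion $Sz(X\hat\otimes_\epsilon Y)\le\omega$ is unaffected, so the repair is cosmetic. The $\beta=\alpha\omega$ case and everything else in your argument is correct and matches the paper.
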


\begin{proof}

Since both $X$ and $Y$ embed into $X\hat{\otimes}_\epsilon Y$, $\max\{Sz(X),Sz(Y)\}\leq Sz(X\hat{\otimes}_\epsilon Y)$.  

Consider the case that $0< n=\dim X<\infty$.  Then $X$ is isomorphically $\ell_\infty^n$.  This means $$X\hat{\otimes}_\epsilon Y=\ell^n_\infty \hat{\otimes}_\epsilon Y=\Bigl(\underset{i=1}{\overset{n}{\oplus}} Y\Bigr)_\infty.$$

By \cite[Proposition 14]{OSZ2}, $\Bigl(\underset{i=1}{\overset{n}{\oplus}} Y\Bigr)_\infty=\max\{Sz(X), Sz(Y)\}=Sz(Y)$.  

Assume both spaces have infinite dimension.  If $\beta=1$, then by \cite[Theorem 3]{OS} there exists some $q>1$ so that $X,Y$ satisfy subsequential $\ell_q$-upper tree estimates.  In this case, put $V=\ell_q$.  If $\beta=\alpha\omega$, then by \cite{FOSZ} there exists some $c\in (0,1)$ so that $X,Y$ satisfy subsequential $T_{\alpha,c}$-upper tree estimates.  Here, $T_{\alpha, c}$ is the Tsirelson space of order $\alpha$.  In this case, put $V=T_{\alpha,c}$.  If we are not in one of these two cases, $X,Y$ satisfy subsequential $X_\beta$-upper tree estimates, and we let $V=X_\beta$.

By Theorem $1.1$, there exist spaces $W,Z$ with shrinking, bimonotone FDDs $E,F$, respectively, which satisfy subsequential $V$-upper block estimates and so that $X,Y$ embed in $W,Z$, respectively.  Because injective tensor products respect subspaces, $X\hat{\otimes}_\epsilon Y\hookrightarrow W\hat{\otimes}_\epsilon Z$.  Thus $Sz(X\hat{\otimes}_\epsilon Y)\leq Sz(W\hat{\otimes}_\epsilon Z)$.  By Lemma $6.3$, $W\hat{\otimes}_\epsilon Z$ satisfies subsequential $V$-upper block estimates.  By Corollary $4.5$, $Sz(W\hat{\otimes}_\epsilon Z)\leq Sz(V)$.  Since $Sz(\ell_q)=\omega$, $Sz(T_{\alpha,c})\linebreak=\omega^{\alpha \omega}$ \cite[Proposition 16]{OSZ2}, and $Sz(X_\beta)=\omega^{\beta+1}$, we have the result.

\end{proof}

\subsection*{Acknowledgements}
The research of the author was supported by the National Science Foundation grant DMS0856148


\begin{thebibliography}{HD}

\normalsize
\baselineskip=17pt

\bibitem{AJO} D. Alspach, R. Judd, E. Odell. \emph{The Szlenk index and local $\ell_1$-indices}, Positivity, 9 (2005), no. 1,1-44. 

\bibitem{DFJP} W. J. Davis, T. Figiel, W. B. Johnson, A. Pelczynski, \emph{Factoring weakly compact operators}, Journal of Functional Analysis, 17. No. 3 (1974), 311-327.  

\bibitem{FOSZ} D. Freeman, E. Odell, Th. Schlumprecht, A. Zs\'{a}k, \emph{Banach spaces of bounded Szlenk index II}, Fund. Math. 205 (2009) 161-177.

\bibitem{G} I. Gasparis, \emph{A dichotomy theorem for subsets of the power subsets of the power set of the natural numbers}, Proceedings of the American Mathematical Society, 129 (2001) 759-764. 

\bibitem{J} W. B. Johnson, \emph{On quotients of $L_p$ which are quotients of $\ell_p$}, Compositio Math. Vol. 34, Fasc. (1977), 69-89.  

\bibitem{JRZ} W. B. Johnson, H. Rosenthal, M. Zippin, \emph{On bases, finite dimensional decompositions, and weaker structures in Banach spaces}, Israel J. Math. {\bf 9} (1971), 488-506.

\bibitem{JZ} W. B. Johnson, M. Zippin, \emph{On subapces of quotients of $\bigl(\Sigma G_n\bigr)_{\ell_p}$ and $\bigl(\Sigma G_n\bigr)_{c_0}$}, Israel J. Math. 13 Nos. 3-4, (1972).  

\bibitem{L} D. R. Lewis, \emph{Conditional weak compactness in certain inductive tensor products}. Math. Ann. 201 (1973) 201-209.  

\bibitem{La} G. Lancien, \emph{A survey on the Szlenk index and some of its applications,} Rev. R. Acad. Cienc. Exactas Fis. Nat. Ser. A Mat. 100 (1-2), (2006) 209-235.

\bibitem{OS1} E. Odell and Th. Schlumprecht, \emph{Trees and branches in Banach spaces}, Trans. Amer. Math. Soc. 354 no. 10 (2002) 4085-4108.  

\bibitem{OS} E. Odell and Th. Schlumprecht, \emph{Embedding into Banach spaces with finite dimensional decompositions}, Rev. R. Acad. Cienc. Exactas Fis. Nat. Ser. A Mat. vol 100 (1-2)(2006), 295-323.

\bibitem{OSZ1} E. Odell, Th. Schlumprecht, A. Zs\'{a}k, \emph{A new infinite game in Banach spaces with applications,} Banach Spaces and their Applications in Analysis, pp. 147-182, Walter de Gruyter, Berlin, 2007.  

\bibitem{OSZ2} E. Odell, Th. Schlumprecht, A. Zs\'{a}k.  \emph{Banach spaces of bounded Szlenk index}, Studia Math. 183 (2007), no. 1, 63-97. 

\bibitem{P} A. Pe\l czy\'{n}ski, \emph{Universal bases}, Studia Math. 32 (1969), 247-268.  

\bibitem{Sch} G. Schechtman. \emph{On Pelczynski's paper ``Universal bases,"} Israel J. Math. 22 (1975) no. 3-4, 181-184.  

\bibitem{SZLENK} W. Szlenk, \emph{The non existence of a separable reflexive Banach space universal for all separable reflexive Banach spaces}, Studia Math. 30 (1968),53-61. 

\bibitem{Z} M. Zippin, \emph{Banach spaces with separable duals}, Trans. Amer. Math. Soc. 310 (1988), no. 1, 371-379. 





\end{thebibliography}
\end{document}